\documentclass[11pt,a4paper,reqno]{article}
\usepackage{amsmath}
\usepackage{amsthm}
\usepackage{enumerate}
\usepackage{amssymb}

\usepackage{verbatim}
\usepackage{url}
\usepackage[latin1]{inputenc}

\usepackage{caption}
\usepackage{graphicx,color}
\setlength{\captionmargin}{8mm}

\setlength{\textwidth}{6.3in}
\setlength{\textheight}{8.3in}
\setlength{\topmargin}{0pt}
\setlength{\headsep}{30pt}
\setlength{\headheight}{0pt}
\setlength{\oddsidemargin}{0pt}
\setlength{\evensidemargin}{0pt}


\def\N{{\mathbb N}}
\def\Z{{\mathbb Z}}

\def\R{{\mathbb R}}

\def\Hb{{\mathbb H}}

\def\Bc{{\mathcal B}}

\def\Dc{{\mathcal D}}
\def\Ec{{\mathcal E}}
\def\Gc{{\mathcal G}}
\def\Hc{{\mathcal H}}

\def\Vc{{\mathcal V}}
\def\Wc{{\mathcal W}}

\def\eps{\varepsilon}
\def\VH{\Vc_\Hc}
\def\iVH{\Vc_\Hc^\circ}
\def\bVH{\partial\Vc_\Hc}

\def\muZ{\mu}
\def\Cyl{{\mathcal C}}

\def\ebf{\mathbf{e}}

\def\raw{\rightarrow}
\def\Pr{\textup{P}}
\DeclareMathOperator{\E}{E}

\def\be{\begin{equation}}
\def\ee{\end{equation}}
\def\bea{\begin{equation*}}
\def\eea{\end{equation*}}
\def\begs{\begin{split}}
\def\ends{\end{split}}

\newtheorem{thm}{Theorem}
\newtheorem{lma}[thm]{Lemma}

\newtheorem{prop}[thm]{Proposition}

\theoremstyle{remark}
\newtheorem*{remark}{Remark}
\newtheorem{preex}[thm]{Example}

\theoremstyle{definition}
\newtheorem*{acknow}{Acknowledgements}
\newtheorem*{keywords}{Keywords}




\begin{document}

\title{Convergence towards an asymptotic shape in first-passage percolation on cone-like subgraphs of the integer lattice
}

\date{\today}

\author{Daniel Ahlberg\\ \emph{University of Gothenburg}\\
\emph{Chalmers University of Technology}}
\maketitle

\begin{abstract}
In first-passage percolation on the integer lattice, the Shape Theorem provides precise conditions for convergence of the set of sites reachable within a given time from the origin, once rescaled, to a compact and convex limiting shape.
Here, we address convergence towards an asymptotic shape for cone-like subgraphs of the $\Z^d$ lattice, where $d\ge2$. In particular, we identify the asymptotic shapes associated to these graphs as restrictions of the asymptotic shape of the lattice. Apart from providing necessary and sufficient conditions for $L^p$- and almost sure convergence towards this shape, we investigate also stronger notions such as complete convergence and stability with respect to a dynamically evolving environment.

\begin{keywords}
First-passage percolation, shape theorem, large deviations, dynamical stability.
\end{keywords}
\end{abstract}

\section{Introduction}

The study of spatial growth in random environments has been in progress for about half a century by now, and the particular model studied in this paper dates back to the work of Hammersley and Welsh~\cite{hamwel65}. They introduced a model known as \emph{first-passage percolation}, whose foremost characteristic feature is its subadditive behaviour. In order to understand the asymptotic behaviour of the process, Hammersley and Welsh were led to develop a theory for subadditive stochastic sequences. The development was continued by Kingman, and culminated with the formulation of his Subadditive Ergodic Theorem in~\cite{kingman68}. The general theory and toolbox for the study of subadditive processes is often restricted to rather strong regularity conditions that in particular include stationarity. Such conditions are often met for processes defined on a regular graph or in a homogeneous space, but are usually violated as soon as regularity is lost, or an inhomogeneous environment is considered. These considerations motivates the present study in which first-passage percolation on certain `cone-like' subgraphs of the $\Z^d$ nearest-neighbour lattice, for $d\ge2$, is considered.

One of the main achievements in first-passage percolation is known as the Shape Theorem, whose complete statement should be attributed to Richardson~\cite{richardson73}, Cox and Durrett~\cite{coxdur81}, and Kesten~\cite{kesten86}. The Shape Theorem gives a law of large numbers for the spatial growth of the first-passage process on the lattice $\Z^d$. When the process is restricted to a subgraph of the lattice it is \emph{a priori} not obvious if, or under which conditions, a similar result may hold. Via a comparison between the first-passage process on the whole lattice and on `cylinder-shaped' graphs, we here extend the Shape Theorem to encompass also `cone-like' subgraphs. We provide some necessary and sufficient conditions for the convergence towards an asymptotic shape to hold in probability, in $L^p$ and almost surely, but also to satisfy stronger notions such as complete convergence and dynamical stability. An observation that we emphasize is that the obtained limiting shape is a mere restriction of the asymptotic shape associated with the lattice itself. We further remark that other aspects of first-passage percolation, related to the existence and structure of finite and infinite geodesics, was recently studied by Auffinger, Damron and Hanson~\cite{aufdamhan13} for subgraphs of the $\Z^2$ lattice.

Considering first-passage percolation on subgraphs of the lattice is the first step towards an `inhomogeneous' version of first-passage percolation, in which disjoint regions of the lattice (such as left and right half-space) are assigned weights according to different distributions. In the case that one of these distributions is dominated by the other, the evolution of the first-passage process in the dominated region is described by the results obtained from this study. However, to describe the evolution in the remaining region, or in the more general case when there is no prescribed relation between the two weight distributions, turns out to be harder and will be further studied in a forthcoming paper~\cite{Adamsid13}. That paper relies of results developed here in order to obtain results under optimal conditions.
\\

A random notion of distance on a graph $\Gc=(\Vc_\Gc,\Ec_\Gc)$ may be obtained by assigning i.i.d.\ non-negative weights $\tau_e$ to its edges $e\in\Ec_\Gc$. Interpreting the weights as lengths, it is possible to define a pseudo-metric between vertices of $\Gc$ as the minimal weight sum among all paths connecting the two points. Here a \emph{path} refers to an alternating sequence of vertices and edges; $v_0,e_1,v_1,\ldots,e_n,v_n$, such that $v_k$ is a common endpoint of the edges $e_k$ and $e_{k+1}$. (We will repeatedly abuse notation and identify a path with its set of edges.) So, for any path $\Gamma$ let $T(\Gamma):=\sum_{e\in\Gamma}\tau_e$, and define the distance, or \emph{travel time}, between $u$ and $v$ in $\Vc_\Gc$ as
\bea
T_\Gc(u,v):=\inf\big\{T(\Gamma):\Gamma\text{ is a path from }u\text{ to }v\big\}.
\eea
In the case when $\Gc$ equals the $\Z^d$ lattice, the random pseudo-metric induces a semi-norm $\muZ(\cdot)$ on $\R^d$. For any sequence $(x_n)_{n\ge1}$ of lattice points where $x_n$ for all $n\ge1$ is within a fixed distance from $nx$
\be\label{eq:timeconstant}
\lim_{n\to\infty}\frac{T(0,x_n)}{n}=\muZ(x),\quad\text{in probability},
\ee
(Here and below we drop the subscript on $T$ when $\Gc$ equals the lattice $\Z^d$.) The existence of the limit in~\eqref{eq:timeconstant} without a moment condition was obtained in~\cite{coxdur81,kesten86}. The convergence is further known to hold almost surely and in $L^1$ given that $\E[Y]<\infty$, where $Y$ denotes the minimum of the weights associated to the $2d$ edges adjacent to the origin, i.e.,
$$
Y:=\min\{\tau_e:e\text{ adjacent to the origin}\}.
$$
The latter is a consequence of the Subadditive Ergodic Theorem of~\cite{kingman68} (see also~\cite{A13}).

The more comprehensive Shape Theorem addresses simultaneous convergence in all directions. Different versions thereof was obtained in~\cite{richardson73},~\cite{coxdur81} and~\cite{kesten86}, and provide necessary and sufficient conditions for convergence of the set $\Wc(t)$ of sites reachable within time $t$ from the origin, once rescaled by time, to the unit ball $\Wc^\muZ=\{x\in\R^d:\muZ(x)\le1\}$ expressed in $\muZ$. A more compact way to describe a result of this kind is in terms of a limit. On this form Cox and Durrett's version states that if $\E[Y^d]<\infty$, then
\be\label{eq:latticeST}
\limsup_{z\in\Z^d:\,|z|\to\infty}\frac{|T(0,z)-\muZ(z)|}{|z|}=0\quad\text{almost surely}.
\ee
That $\E[Y^d]<\infty$ is necessary is also well-known. 
There are two regimes --- $\muZ\equiv0$ and $\muZ(x)\neq0$ for every $x\neq0$ --- offering quite different behaviour. In the latter regime $\muZ$ is a proper norm, and~\eqref{eq:latticeST} is equivalent to the following: If $\E[Y^d]<\infty$, then for every $\eps>0$, almost surely,
\be\label{eq:latticeinclusion}
(1-\eps)\Wc^\muZ\subset\frac{1}{t}\Wc(t)\subset(1+\eps)\Wc^\muZ\quad\text{for all large enough }t.
\ee

We are in this study is concerned with first-passage percolation on cone-like subgraphs of the $\Z^d$ lattice, with the principal aim of examining the conditions under which a shape theorem can be established. A subgraph $\Gc$ of the $\Z^d$ lattice will be referred to as \emph{induced} by $V\subset\R^d$ if $\Vc_\Gc=V\cap\Z^d$ and any two vertices in $\Gc$ are connected by an edge if (and only if) they are in the lattice.
Let $B(y,r):=\{x\in\R^d:|x-y|\le r\}$ denote the closed Euclidean ball. The graphs that we will consider will be of the form $\bigcup_{a\ge0}B(au,\omega(a))$, where $u$ is some unit direction and $\omega:[0,\infty)\to[0,\infty)$ an increasing function. For simplicity, we will restrict our attention to affine functions, which is indeed the more interesting case.

\subsection{A shape theorem for subgraphs of the integer lattice}

We will throughout the paper let $\Hc=\Hc(u,c)$ denote the subgraph of the $\Z^d$ lattice induced by the set $\bigcup_{a\ge0}B(au,ca+4\sqrt{d})$, for some unit vector $u\in\R^d$ and constant $c>0$. The term $4\sqrt{d}$ is required to make sure that the resulting graph is sufficiently well-connected, and will be motivated in Lemma~\ref{lma:2dpaths} below. Under a minimal moment assumption we may prove a weak law for the convergence towards an asymptotic shape on $\Hc$. The weak law can be stated as if $\E[Y^p]<\infty$ for some $p>0$, then for every $\eps>0$
\be\label{eq:weaklaw}
\limsup_{z\in\VH:\,|z|\to\infty}\Pr\big(|T_\Hc(0,z)-\muZ(z)|>\eps|z|\big)=0.
\ee
(See Proposition~\ref{prop:weakST} in Section~\ref{sec:discussion}.) However, under a stronger moment assumption we may obtain stronger modes of convergence. In order to present sharp conditions it will be necessary to separate the boundary from the bulk. Let $\Bc(u,c):=\bigcup_{a\ge0}B(au,ca)$, and partition the set $\VH$ of vertices of $\Hc(u,c)$ into its \emph{interior} $\iVH:=\Bc(u,c)\cap\Z^d$ and its \emph{boundary} $\bVH:=\VH\setminus\iVH$. Our main achievement is at occasions referred to as a Hsu-Robbins-Erd\H{o}s type of strong law, and characterizes the regime for polynomial rate of decay in~\eqref{eq:weaklaw}.

\begin{thm}\label{thm:HREcone}
Let $d\ge2$ and consider first-passage percolation on $\Hc=\Hc(u,c)$ for some unit vector $u\in\R^d$ and $c>0$. For every $\eps>0$ and $p>0$
$$
\E[Y^p]<\infty\quad\Leftrightarrow\quad\sum_{z\in\iVH}|z|^{p-d}\,\Pr\big(|T_\Hc(0,z)-\muZ(z)|>\eps|z|\big)<\infty.
$$
\end{thm}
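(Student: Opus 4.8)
The plan is to prove the two implications separately, and in both directions the bridge between the cone-like graph $\Hc$ and the full lattice $\Z^d$ will be a coupling: since $\Hc$ is a subgraph of $\Z^d$, every path in $\Hc$ is a path in $\Z^d$, so $T_\Hc(0,z)\ge T(0,z)$ deterministically; conversely, for $z\in\iVH$ sitting safely inside the cone $\Bc(u,c)$, a geodesic in $\Z^d$ from $0$ to $z$ stays (with high probability) inside $\Hc$, giving $T_\Hc(0,z)=T(0,z)$ on a likely event. I would first record this comparison precisely, presumably quoting the cylinder-comparison machinery alluded to in the introduction and Lemma~\ref{lma:2dpaths}: for $z\in\iVH$ there is a constant $\delta=\delta(z)$, bounded below by a fixed $\delta_0>0$ when $z$ is in the bulk of the cone, such that the Euclidean ball of radius $\delta|z|$ around the segment $[0,z]$ lies in the region defining $\Hc$; a $\Z^d$-geodesic leaving this tube must cross an annulus of width $\asymp\delta|z|$, an event controllable by a large-deviation estimate for the lattice first-passage process.

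For the easy direction ($\E[Y^p]<\infty\Rightarrow$ summability), I would split the probability $\Pr(|T_\Hc(0,z)-\muZ(z)|>\eps|z|)$ according to whether the $\Z^d$-geodesic to $z$ stays in $\Hc$. On the good event $T_\Hc(0,z)=T(0,z)$, so that contribution is bounded by $\Pr(|T(0,z)-\muZ(z)|>\eps|z|)$, and $\sum_{z\in\Z^d}|z|^{p-d}\Pr(|T(0,z)-\muZ(z)|>\eps|z|)<\infty$ is exactly the lattice Hsu--Robbins--Erd\H{o}s statement under $\E[Y^p]<\infty$ (this is the known sharpening of~\eqref{eq:latticeST}, which I would cite). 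The bad event --- the $\Z^d$-geodesic exits the tube around $[0,z]$ --- has probability decaying fast enough in $|z|$ (stretched-exponentially, or at least polynomially of arbitrarily high order, uniformly over the bulk directions) that $\sum_{z\in\iVH}|z|^{p-d}\Pr(\text{bad})<\infty$ with room to spare; this uses only $\E[Y]<\infty$ or an even weaker tail condition, together with the fact that $|z|^{p-d}$ summed over a fixed-width cone shell of radius $R$ grows only polynomially in $R$. One must handle the boundary caveat: the theorem sums over $\iVH$, so $z$ is genuinely interior and the tube radius $\delta|z|$ is comparable to $|z|$, uniformly; this is where restricting to $\iVH$ rather than all of $\VH$ is essential and makes the estimates uniform.

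For the converse ($\text{summability}\Rightarrow\E[Y^p]<\infty$), I would argue by contraposition exactly as in the classical lattice proof: if $\E[Y^p]=\infty$ then $\sum_{k\ge1}k^{p-1}\Pr(Y>\eta k)=\infty$ for small $\eta$, and one exhibits a lower bound for $\Pr(|T_\Hc(0,z)-\muZ(z)|>\eps|z|)$ in terms of the event that some edge incident to a site near $z$ (or near $0$) has weight of order $|z|$. Concretely, pick for each radius $R$ a single site $z=z_R\in\iVH$ with $|z_R|\asymp R$ (possible since the cone contains such sites for all large $R$, and one can even spread them out along a ray so the relevant local events are independent across a subsequence of scales); on the event that all $2d$ edges at $z_R$ exceed $c_0|z_R|$ for a suitable $c_0$, any path from $0$ to $z_R$ --- in particular within $\Hc$ --- must pay at least one such edge and hence $T_\Hc(0,z_R)\ge c_0|z_R|>\muZ(z_R)+\eps|z_R|$ once $c_0$ is chosen bigger than $\muZ(u)+\eps$. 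Then $\sum_R |z_R|^{p-d}\Pr(\text{all edges at }z_R>c_0|z_R|)$ is already a divergent series (it is comparable to $\sum_R R^{p-d}\cdot\#\{z:|z|\asymp R\}\cdot\Pr(Y>c_0 R)^{2d}/\#\{\dots\}$ — more cleanly, just keep one term per scale and get $\sum_R R^{p-1}\Pr(Y>c_0R)=\infty$), which forces the full sum over $\iVH$ to diverge, contradiction.

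The main obstacle I expect is the uniformity in the ``bad event'' estimate of the easy direction: one needs the probability that a $\Z^d$-geodesic from $0$ to $z$ wanders a macroscopic distance $\delta|z|$ away from the straight segment to be summable against $|z|^{p-d}$ for \emph{every} $p$, which requires a large-deviation bound of the form $\Pr(\text{geodesic leaves }\delta|z|\text{-tube})\le Ce^{-c|z|^\alpha}$ (or at minimum $\le C|z|^{-N}$ for all $N$) that holds uniformly over directions $u$ and over interior points $z$, and whose constants depend on $\dist(z,\bVH)/|z|$ being bounded below. Assembling such an estimate from the cylinder comparison in Lemma~\ref{lma:2dpaths} and the known lattice large-deviation results (e.g. the lower-tail and transversal-fluctuation bounds for $T(0,z)$) is the technical heart of the argument; everything else is bookkeeping with the polynomial weight $|z|^{p-d}$ over conical shells and a reduction to the already-established lattice Hsu--Robbins--Erd\H{o}s theorem.
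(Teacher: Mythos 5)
Your sufficiency direction has a genuine gap, and it sits exactly where you locate the ``technical heart''. First, the uniformity you attribute to $\iVH$ is not there: the interior is simply $\Bc(u,c)\cap\Z^d$, so $\iVH$ contains points $z$ with $|z|\to\infty$ whose Euclidean distance to $\partial\Bc(u,c)$ stays bounded; since the cone is scale invariant, the point $az$ on the segment $[0,z]$ is at distance roughly $a\,\dist(z,\partial\Bc(u,c))$ from the boundary, so for such $z$ no tube of radius $\delta|z|$ around $[0,z]$ is contained in $\Hc$ for any fixed $\delta>0$. Hence even a perfect localization statement for lattice geodesics at scale $\delta|z|$ would not give $T_\Hc(0,z)=T(0,z)$ on a likely event for all $z\in\iVH$; restricting to interior points only removes the $4\sqrt d$-collar where vertices may lack neighbours, it does not keep $z$ at macroscopic distance from the cone boundary. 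Second, the localization estimate itself is not available under the standing hypothesis ($\E[Y^p]<\infty$ for one, possibly tiny, $p>0$) and is false in the generality the theorem covers: the theorem includes the regime $\Pr(\tau_e=0)\ge p_c(d)$, where $\muZ\equiv0$ and optimal paths run through the zero-weight cluster with no reason to stay near $[0,z]$ (indeed minimizers need not be unique or even attained), and even when $\muZ$ is a norm its unit ball may have flat facets, in which case a linear-order excursion off the segment costs nothing at the large-deviation scale, so a bound of the form $\Pr(\text{geodesic exits the }\delta|z|\text{-tube})\le C|z|^{-N}$ for all $N$ (let alone stretched-exponential, from $\E[Y]<\infty$ ``or weaker'') cannot be expected. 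The paper never argues through geodesics: it builds explicit comparison paths out of constant-radius cylinders lying inside the cone (Lemma~\ref{lma:tube}, Propositions~\ref{prop:tube} and~\ref{prop:cone}, with Proposition~\ref{prop:Ycond} patching the short hops), which yields the large-deviation bound of Theorem~\ref{thm:LDEcone}, $M\,\Pr(Y>x/M)+M x^{-q}$; this is only polynomial decay, but with $q=p+1$ chosen after $p$ it is exactly what summability against $|z|^{p-d}$ requires. Your route would need a qualitatively stronger input than the theorem's hypotheses can supply.

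The necessity direction is essentially right and close in spirit to the paper (which lower-bounds $T_\Hc(0,z)$ by the minimum $Y$ of the $2d$ edge weights at the origin), but your bookkeeping slips: keeping one site per scale $R$ carries the weight $R^{p-d}$, not $R^{p-1}$, and divergence of $\sum_R R^{p-d}\Pr(Y>c_0R)$ does not follow from $\E[Y^p]=\infty$ when $d\ge2$. You should instead sum over all of the order $R^{d-1}$ interior sites in each shell --- no independence or spreading along a ray is needed, since you are only adding probabilities --- which restores the weight $R^{p-1}$ and gives $\sum_R R^{p-1}\Pr\big(Y>(\muZ(\ebf_1)+\eps)R\big)=\infty$, the desired contradiction; note also that $\Pr(\text{all }2d\text{ edges at }z\text{ exceed }x)$ is exactly $\Pr(Y>x)$, so no extra power $2d$ should appear.
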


In the case $\E[Y^d]<\infty$, then Theorem~\ref{thm:HREcone} implies that the cardinality of the set of sites $\big\{z\in\iVH:|T_\Hc(0,z)-\muZ(z)|>\eps|z|\big\}$ has finite expectation. This was by Hsu and Robbins~\cite{hsurob47}, in the context of random sequences, referred to as \emph{complete} convergence. Complete convergence implies almost sure convergence via Borel-Cantelli's lemma. Hence, as a corollary we obtain an analogue to~\eqref{eq:latticeinclusion} for first-passage percolation on $\Hc$. Let
$$
\Wc_\Hc(t):=\Big\{x\in\Bc(u,c):\exists\,z\in\VH\text{ for which }\|x-z\|_\infty\le1/2\text{ and }T_\Hc(0,z)\le t\Big\}.
$$
(Here $\|\cdot\|_\infty$ denotes the supremum norm on $\R^d$.) Assume again that $\muZ\not\equiv0$. Then, $\E[Y^d]<\infty$ implies that for every $\eps>0$, almost surely,
\be\label{eq:shapeinclusion}
(1-\eps)\Wc_\Hc^\muZ\subset\frac{1}{t}\Wc_t\subset(1+\eps)\Wc_\Hc^\muZ\quad\text{for all large enough }t,
\ee
where $\Wc_\Hc^\muZ:=\{x\in\R^d:\muZ(x)\le1\}\cap\Bc(u,c)$ equals the restriction to $\Bc(u,c)$ of the asymptotic shape $\Wc^\muZ$ appearing as the limit for first-passage percolation on $\Z^d$.

A Hsu-Robbins-Erd\H{o}s strong law analogous to Theorem~\ref{thm:HREcone} has in parallel be derived for the full $\Z^d$ lattice by the same author in~\cite{A13}. Just as in that study, the main ingredient in the proof will be an estimate on linear order deviations of travel times away from the time constant (Theorem~\ref{thm:LDEcone} below). Also based on this large deviations estimate, we further provide a complementary characterization of $L^p$-convergence.

\begin{thm}\label{thm:Lp-ST}
Let $d\ge2$ and consider first-passage percolation on $\Hc=\Hc(u,c)$ for some unit vector $u\in\R^d$ and $c>0$. For every $p>0$
$$
\E[Y^p]<\infty\quad\Leftrightarrow\quad\limsup_{z\in\iVH:\,|z|\to\infty}\E\left|\frac{T_\Hc(0,z)-\muZ(z)}{|z|}\right|^p=0.
$$
\end{thm}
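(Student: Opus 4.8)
The plan is to deduce both implications from the large deviations estimate (Theorem~\ref{thm:LDEcone}) together with the Hsu--Robbins--Erd\H{o}s characterization of Theorem~\ref{thm:HREcone}, using the standard layer-cake representation of $L^p$-moments. Write $X_z:=|T_\Hc(0,z)-\muZ(z)|/|z|$ for $z\in\iVH$, so that the claim is that $\E[Y^p]<\infty$ is equivalent to $\E[X_z^p]\to0$ as $|z|\to\infty$ along $\iVH$.

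First I would treat the forward implication, $\E[Y^p]<\infty\Rightarrow\E[X_z^p]\to0$. Using $\E[X_z^p]=p\int_0^\infty t^{p-1}\Pr(X_z>t)\,dt$, I would split the integral at a fixed $\eps>0$. On $[0,\eps]$ the contribution is at most $\eps^p$, which is harmless since $\eps$ is arbitrary; the real work is to show $\int_\eps^\infty t^{p-1}\Pr(X_z>t)\,dt\to0$ uniformly-in-the-$\limsup$ sense. For the tail $t\ge\eps$, the large deviations estimate of Theorem~\ref{thm:LDEcone} should give a bound of the form $\Pr(X_z>t)\le C_1\exp(-C_2 t|z|)$ for $t$ in some range $[\eps,\delta]$, plus a residual term controlled by the tail of $Y$ (the slow part coming from the possibility that the edges at the origin, or at $z$, have large weight). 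The exponential part contributes something like $C_1\int_\eps^\delta t^{p-1}e^{-C_2 t|z|}\,dt=O(|z|^{-p})\to0$. The genuinely delicate term is the $Y$-dependent residual: here I would use $\E[Y^p]<\infty$ to show that the contribution of events where a boundary weight exceeds order $t|z|$ decays, essentially because $t^{p-1}\Pr(Y>ct|z|)$ integrated against $dt$ is comparable to $|z|^{-p}\E[Y^p\mathbf 1_{Y>c\eps|z|}]$, which vanishes as $|z|\to\infty$ by dominated convergence. This mirrors the argument used in~\cite{A13} for the full lattice, now carried out on $\Hc$ with the help of Theorem~\ref{thm:LDEcone}.

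For the reverse implication, $\E[X_z^p]\to0\Rightarrow\E[Y^p]<\infty$, I would argue by contraposition, or more directly by comparing $X_z$ from below with a single edge weight. Fix a direction, say $z=n\ebf_1$ ranging over $\iVH$ (which is nonempty for all large $n$ since $\ebf_1$-like directions lie in the interior cone once we are far enough out — if $u$ is not along $\ebf_1$ one picks any fixed lattice direction in the open cone $\Bc(u,c)$). If $\E[Y^p]=\infty$, then I claim $\E[X_{n\ebf_1}^p]$ cannot tend to $0$. The point is that $T_\Hc(0,z)$ is at least the minimum weight over the $2d$ edges at $z$ whenever the geodesic must use one of those edges to reach $z$; more robustly, conditioning on the configuration away from a fixed neighbourhood of $z$ and using the fact that with positive probability the optimal route into $z$ is forced through a specified last edge, one gets $T_\Hc(0,z)\ge Y_z + (\text{something of order }\muZ(z))$ on an event of probability bounded below, where $Y_z$ is the min-weight at $z$, distributed as $Y$. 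Hence $|T_\Hc(0,z)-\muZ(z)|\gtrsim Y_z$ on that event when $Y_z$ is large, giving $\E[X_z^p]\gtrsim |z|^{-p}\E[Y^p\mathbf 1_{Y\le\text{const}\cdot|z|}]$, which diverges to a positive constant as $|z|\to\infty$ if $\E[Y^p]=\infty$ — contradicting $\E[X_z^p]\to0$. (In fact this direction is essentially already contained in the ``$\Leftarrow$'' of Theorem~\ref{thm:HREcone}: if $\E[Y^p]=\infty$ then the sum $\sum_{z\in\iVH}|z|^{p-d}\Pr(X_z>\eps)$ diverges, which by a Borel--Cantelli / counting argument is incompatible with $\E[X_z^p]\to0$ when combined with the near-independence of the origin-neighbourhood weights across far-apart $z$'s; I would phrase whichever is cleaner.)

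The main obstacle I anticipate is the uniformity in the forward direction: Theorem~\ref{thm:LDEcone} presumably gives an exponential bound only for deviations $X_z>t$ with $t$ below some threshold $\delta$ (linear-order deviations), so for $t\in(\delta,\infty)$ one must separately bound $t^{p-1}\Pr(X_z>t)$ and integrate — and this is exactly where the $Y$-tail enters, since a deviation of size $t|z|$ with $t$ large can only be produced cheaply by an atypically heavy edge near the endpoints. Making the resulting estimate genuinely go to $0$ (not merely stay bounded) requires the truncation $\E[Y^p\mathbf 1_{Y>c\eps|z|}]\to0$, and one has to be careful that the constants coming out of Theorem~\ref{thm:LDEcone} are uniform over $z\in\iVH$ — which is where the choice of $\Hc$ with the interior cone $\Bc(u,c)$ strictly inside, and the $4\sqrt d$ slack from Lemma~\ref{lma:2dpaths}, is used to keep geodesics from $0$ to interior points comfortably inside $\Hc$.
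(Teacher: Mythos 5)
Your forward implication ($\E[Y^p]<\infty\Rightarrow L^p$-convergence) follows the paper's route: layer-cake representation, split the integral at $\eps$, bound the tail via Theorem~\ref{thm:LDEcone}, and use $\E[Y^p\mathbf{1}_{\{Y>c\eps|z|\}}]\to0$. One inaccuracy: Theorem~\ref{thm:LDEcone} does not give an exponential bound on any range of $t$; it gives $M\Pr(Y>x/M)+M/x^q$. This is harmless --- choosing $q=p+1$ makes the polynomial term contribute $O(\|z\|^{-1})$ after integration, which is exactly how the paper closes that part.

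The reverse implication is where your proposal has a genuine gap, and ironically it is the trivial direction. Both arguments you offer fail. First, your lower bound $\E[X_z^p]\gtrsim|z|^{-p}\E[Y^p\mathbf{1}_{\{Y\le C|z|\}}]$ need not stay bounded away from $0$ when $\E[Y^p]=\infty$: if $\Pr(Y>x)\asymp x^{-p}$ then $\E[Y^p\mathbf{1}_{\{Y\le x\}}]\asymp\log x$, so your right-hand side tends to $0$ and no contradiction results. Second, the parenthetical route via Theorem~\ref{thm:HREcone} is also invalid: divergence of $\sum_{z}|z|^{p-d}\Pr(X_z>\eps)$ is perfectly compatible with $\E[X_z^p]\to0$ (the sum $\sum_n n^{p-1}a_n$ diverges already for $a_n\sim1/\log n$), so no counting or Borel--Cantelli argument can extract the conclusion from divergence of that series alone. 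The correct argument is one line and needs none of your conditioning on geodesics: every path from $0$ to $z$ uses an edge adjacent to the origin, so $T_\Hc(0,z)\ge Y$ deterministically; hence if $\E[Y^p]=\infty$ then $\E\big|T_\Hc(0,z)-\muZ(z)\big|^p\ge\E\big[(Y-\muZ(z))_+^p\big]=\infty$ for every fixed $z\ne0$ (no truncation --- $\muZ(z)$ is just a constant), and the $\limsup$ is $+\infty$ rather than $0$. This is exactly the paper's necessity argument.
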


The summation and limit in Theorem~\ref{thm:HREcone} and~\ref{thm:Lp-ST} run over interior points of $\Hc=\Hc(u,c)$. Note that if $c>1$, then $\Hc$ equals the lattice and the boundary is empty. On the other hand, if $c=1$ then $\Hc$ equals a half-space, and for $c\in(0,1)$ the set of interior point is simply those contained in a $d$-dimensional cone with radius $\frac{c}{\sqrt{1-c^2}}\cdot a$ at distance $a$ from its apex.
Extending the summation and the limit to include also points in the boundary is possible, but will have to be done to the cost of a somewhat stronger moment condition on the weight distribution. The reason for this, as well as some necessary and sufficient conditions for the extension of Theorem~\ref{thm:HREcone} and~\ref{thm:Lp-ST} to boundary points, will be discussed in Section~\ref{sec:discussion}.

\subsection{Super-linear growth outside the percolation regime}

As previously mentioned, $\muZ:\R^d\to[0,\infty)$ inherits the properties of a semi-norm. The precise condition for $\muZ$ to be an actual norm is closely related to the critical threshold $p_c(d)$ for bond percolation on $\Z^d$. Kesten~\cite{kesten86} showed that $\Pr(\tau_e=0)<p_c(d)$ is equivalent to $\mu(x)\neq0$ for all $x\neq0$. This may give the false impression that the condition for first-passage percolation on a graph $\Gc$ to obey a linear growth rate (in the sense of~\eqref{eq:shapeinclusion}), and thus not evolve in a more explosion-like manner, as a general fact coincides with the subcritical regime of percolation of 0-weight edges. To give an example where this does not happen, consider the subgraph of the $\Z^2$ lattice induced by the set $\big\{(x,y)\in\R^2:0\le y\le a\log(1+x)\big\}$, for some $a\in\R_+$. Grimmett~\cite{grimmett83} proved that for every $a\in\R_+$ the critical probability for bond percolation on this graph lies strictly between 1/2 and 1. Although this graph is not directly covered by Theorem~\ref{thm:HREcone}, one may by the same means conclude that travel times in the coordinate direction obey a similar strong law also for this graph, with its rate given by $\mu(\ebf_1)$, where $\ebf_i$ denotes the unit vector in the $i$th coordinate axis for $i=1,2,\ldots,d$. Since $\mu(\ebf_1)>0$ only if $\Pr(\tau_e=0)<1/2$, we arrive at the following conclusion.

\begin{prop}\label{prop:regime}
For every $a>0$, the critical probability for bond percolation on the subgraph $\Gc$ of the square lattice induced by $\big\{(x,y)\in\R^2:0\le y\le a\log(1+x)\big\}$ is strictly between $1/2$ and $1$. However, for any weight distribution with $\E[Y]<\infty$ that assigns at least mass $1/2$ to the value $0$, we have
$$
\lim_{n\to\infty}\frac{T_\Gc(0,n\ebf_1+\ebf_2)}{n}=0,\quad\text{almost surely}.
$$
\end{prop}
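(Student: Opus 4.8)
The plan is to prove Proposition~\ref{prop:regime} by splitting it into the two asserted facts: the percolation statement $1/2 < p_c(\Gc) < 1$, and the vanishing travel-time statement. The first fact is quoted directly from Grimmett~\cite{grimmett83}, so no work is needed there beyond citing it; the content is in the second claim. For that, fix a weight distribution with $\E[Y]<\infty$ and $\Pr(\tau_e=0)\ge1/2$; we must show $T_\Gc(0,n\ebf_1+\ebf_2)/n\to0$ almost surely. The key structural observation is that even though $\Gc$ is a very thin slab (height growing only logarithmically), it contains, for every $a>0$, a horizontal line of vertices $\{(k,0):k\ge0\}$ together with all the nearest-neighbour edges between consecutive such vertices, and moreover each such vertex $(k,0)$ has at least one vertical edge available above it once $k$ is large enough. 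Travel along the bottom line is a one-dimensional first-passage problem whose edge weights $\tau_e$ take the value $0$ with probability at least $1/2$.

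The main step is to show that travel time along the bottom row of $\Gc$ from $(0,0)$ to $(n,0)$ is $o(n)$ almost surely. Here I would exploit the same comparison philosophy used elsewhere in the paper: since $\Pr(\tau_e=0)\ge1/2 = p_c$ for bond percolation on $\Z$ in a suitable sense — more precisely, one uses that at criticality (or supercriticality) for percolation on the bottom line's natural auxiliary structure — long stretches of $0$-weight edges appear. Concretely, let me instead argue via a blocking/renormalization scheme along the one-dimensional chain: partition $\{0,1,\dots,n\}$ into consecutive blocks of length $\ell$, and declare a block \emph{good} if it can be crossed at cost at most $\delta\ell$ using edges in $\Gc$ near the bottom; one shows $\Pr(\text{block good})\to1$ as $\ell\to\infty$ for any fixed $\delta>0$, because with $\Pr(\tau_e=0)\ge1/2$ one can, within a block, route through a long run of zero-weight edges (possibly stepping up and back down by one, which is where the $\ebf_2$ coordinate and the logarithmically growing height are used — only finitely many low vertices lack the vertical edge). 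A Borel–Cantelli / ergodic argument over blocks then yields $T_\Gc((0,0),(n,0))\le \delta n$ eventually, hence $\limsup_n T_\Gc((0,0),(n,0))/n\le\delta$, and letting $\delta\downarrow0$ along a countable sequence gives the limit $0$ a.s. Finally, $T_\Gc(0,n\ebf_1+\ebf_2)\le T_\Gc((0,0),(n,0)) + \tau_{e}$ for the single vertical edge $e$ at $(n,0)$ (available for large $n$), and $\tau_e/n\to0$ trivially, so the same limit holds for the stated endpoint.

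The main obstacle, I expect, is making the block-crossing estimate clean at exactly $p=1/2$ rather than $p>1/2$: one cannot invoke an exponential supercritical estimate, so the argument must be robust to using only $\Pr(\tau_e=0)\ge1/2$. The natural fix is that we do not need exponential rates — only $\Pr(\text{block good})\to1$ as $\ell\to\infty$ for fixed $\delta$ — and this follows from the weak law of large numbers applied to the i.i.d.\ weights within a block together with the $0$-mass-at-least-$1/2$ hypothesis: a proportion at least $1/2 - o(1)$ of the $\ell$ edges in the block have weight $0$, and by choosing to traverse long zero-runs (paying only to hop over the occasional positive edge, whose total cost is controlled by $\E[Y]<\infty$ through a truncation and a second-moment or first-moment maximal bound) one gets cost $o(\ell)$ with probability tending to $1$. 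A minor subtlety is the handling of the finitely many bottom vertices near the apex that may fail to have the vertical edge, but since there are only finitely many, they contribute a constant to the travel time and are irrelevant after dividing by $n$.
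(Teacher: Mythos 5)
Your reduction of the second claim to a block-crossing estimate contains a genuine gap: the assertion that a block of length $\ell$ can be crossed at cost $o(\ell)$ with probability tending to one, using only that a proportion $1/2-o(1)$ of its edges have weight zero, is false as argued. Along the bottom line the passage time from $(0,0)$ to $(\ell,0)$ is simply $\sum_{k=1}^{\ell}\tau_{e_k}$, which by the law of large numbers is of order $\ell\,\E[\tau_e]$; the zero-weight edges do not let you ``hop over'' the positive ones, which make up half of all edges rather than an occasional obstruction. Nor do detours inside the strip rescue the argument: at density exactly $1/2$ the zero-weight edges do \emph{not} percolate in $\Gc$ --- that is precisely the first half of the proposition, $p_c(\Gc)>1/2$ --- and the probability that a long thin box of dimensions $\ell\times a\log\ell$ is crossed lengthwise by zero-weight edges tends to $0$, not $1$. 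Indeed, if a density/LLN argument of the kind you sketch were valid, it would show that the growth rate vanishes whenever a fixed positive fraction of edges has weight zero, which is false on $\Z^2$ for $\Pr(\tau_e=0)<1/2$.

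The missing idea is that the vanishing of the rate at mass $1/2$ on zero is not a density phenomenon but Kesten's theorem: on $\Z^2$, $\muZ(\ebf_1)=0$ if and only if $\Pr(\tau_e=0)\ge p_c(2)=1/2$. The paper's (sketched) route is accordingly to identify the a.s.\ linear growth rate of $T_\Gc(0,n\ebf_1+\ebf_2)$ with the \emph{full-lattice} time constant $\muZ(\ebf_1)$ and only then invoke Kesten. The identification is possible because, although the strip is only logarithmically wide, its width tends to infinity, so beyond a finite $x_0(r)$ it contains a slab of any prescribed width $r$; by Proposition~\ref{prop:muKconv} the cylinder time constants $\mu_{\Cyl(\ebf_1,r)}$ decrease to $\muZ(\ebf_1)$ as $r\to\infty$, which gives the upper bound, while the coupling $T_\Gc\ge T$ gives the lower bound. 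The paper then obtains almost sure convergence by establishing the analogue of Theorem~\ref{thm:LDEcone} for this graph (with interior points taken to be those $(z_1,z_2)$ with $z_1\ge e^{3/a}$ having all four neighbours in the graph) and mimicking the sufficiency proof of Theorem~\ref{thm:HREcone}. Your treatment of the first claim, quoting Grimmett, does match the paper.
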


\subsection{Dynamical first-passage percolation}

In order to study the stability of travel times with respect to small perturbations, we introduce a dynamical version of first-passage percolation. The dynamical version is obtained by, apart from assigning i.i.d.\ weights, associating independent Poisson clocks to the edges of your graph. When a clock rings, the weight of the corresponding edge is resampled from the same distribution. Before formalizing the above dynamics further, let us mention its source of inspiration.

The dynamical first-passage model is inspired by the dynamical (bond) percolation model in which edges of a graph flip between `present' and `absent' at the toll of i.i.d.\ Poisson clocks. At each fixed time, an infinite open component exists with probability either 0 or 1 (depending on the probability of an edge being present). Is this property dynamically stable in the sense that for almost every realization we will see an infinite open component either present or absent at all times? This question was first studied by H\"aggstr\"om, Peres and Steif in \cite{hagperste97}. In a related work, Benjamini, H\"aggstr\"om, Peres and Steif \cite{benhagperste03} consider similar questions in the context of i.i.d.\ sequences.
The existence of exceptional times and the related concept of noise sensitivity was more profoundly studied for bond percolation on the square lattice, and site percolation on the triangular lattice, in an influential series of works by Benjamini, Kalai and Schramm \cite{BKS99}; Schramm and Steif \cite{schste10}; and Garban, Pete and Schramm \cite{garpetsch10}.

To give a more formal definition of the dynamical first-passage model, let $u\in\R^d$ have unit length and $c>0$, and associate independently to the edges of $\Hc=\Hc(u,c)$ a random process $\{\tau_e(s)\}_{s\ge0}$ defined as follows. Given non-negative i.i.d.\ random variables $\{\tau_e^{(j)}\}_{j\ge1}$ and independent rate-1 exponentially distributed random variables $\{\xi_e^{(j)}\}_{j\ge1}$, let $\xi_e^{(0)}=0$ and define for all $s\ge0$
$$
\tau_e(s):=\tau_e^{(j)},\quad\text{for }\sum_{k=0}^{j-1}\xi_e^{(k)}\,\le\, s\,<\,\sum_{k=0}^j\xi_e^{(k)}.
$$
Note that if $\tau_e^{(j)}$ has probability measure $\nu$, then for every $s\geq0$, the distribution of $\{\tau_e(s)\}_{e\in\Ec_\Hc}$ is given by the product measure $\nu^{\Ec_\Hc}$. Let $T_\Hc^{(s)}(y,z)$ denotes the travel time between $y$ and $z$ in $\VH$ with respect to $\{\tau_e(s)\}_{e\in\Ec_\Hc}$. Then
$$
T_\Hc^{(0)}(y,z)\,\stackrel{d}{=}\,T_\Hc^{(s)}(y,z),\quad\text{for all }s\geq0.
$$

It is natural to think of the dynamical environment as evolving with time. This gives us two time dimensions. It may help to interpret $T_\Hc^{(s)}(y,z)$ as the minimal \emph{cost} to travel between $y$ and $z$ at \emph{time} $s$, and $\big\{T_\Hc^{(s)}(y,z)\big\}_{s\ge0}$ as the evolution over time of the cost for that travel.

Combining Theorem~\ref{thm:HREcone} and Fubini's theorem shows that if $\E[Y^d]<\infty$, then
\be\label{eq:Fubini}
\Pr\left(\limsup_{z\in\iVH:\;|z|\to\infty}\frac{|T_\Hc^{(s)}(0,z)-\muZ(z)|}{|z|}=0\text{ for Lebesgue-a.e. }s\ge0\right)=1.
\ee
Our next result states that this can be strengthened to hold for \emph{every} $s\ge0$. The almost sure convergence towards an asymptotic shape is so to say \emph{dynamically stable} with respect to the introduced dynamics. The statement we shall prove is in fact stronger than so.

\begin{thm}\label{thm:DST}
Let $d\ge2$ and consider dynamical first-passage percolation on $\Hc=\Hc(u,c)$ for some unit vector $u\in\R^d$ and $c>0$. For every $\eps>0$ and $p>0$
$$
\E[Y^p]<\infty\quad\Leftrightarrow\quad\sum_{z\in\iVH}|z|^{p-d}\,\Pr\bigg(\sup_{s\in[0,1]}|T^{(s)}_\Hc(0,z)-\muZ(z)|>\eps|z|\bigg)<\infty.
$$
\end{thm}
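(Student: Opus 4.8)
The plan is to deduce Theorem~\ref{thm:DST} from the static result, Theorem~\ref{thm:HREcone}, together with the underlying linear large deviations estimate (Theorem~\ref{thm:LDEcone}) applied to a suitably enlarged environment. The key observation is that the forward implication ($\Leftarrow$) is immediate: since $\sup_{s\in[0,1]}|T^{(s)}_\Hc(0,z)-\muZ(z)|\ge|T^{(0)}_\Hc(0,z)-\muZ(z)|$, the sum in Theorem~\ref{thm:DST} dominates the sum in Theorem~\ref{thm:HREcone}, so its finiteness forces $\E[Y^p]<\infty$. All the work is in the reverse implication, and there the main idea is a union bound over time combined with a coupling that controls how much the travel time can fluctuate over the time interval $[0,1]$.

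The first step is to record the trivial but crucial fact that $\sup_{s\in[0,1]}T^{(s)}_\Hc(0,z)$ is bounded above by the travel time in the environment $\bar\tau_e:=\sup_{s\in[0,1]}\tau_e(s)$, which is the maximum of $\tau_e^{(1)},\ldots,\tau_e^{(N_e+1)}$ where $N_e$ is the number of clock rings of edge $e$ in $[0,1]$ (a Poisson$(1)$ variable). Symmetrically, $\inf_{s\in[0,1]}T^{(s)}_\Hc(0,z)$ is bounded below by the travel time in the environment $\underline\tau_e:=\inf_{s\in[0,1]}\tau_e(s)$. So the supremum over $s\in[0,1]$ of the deviation is controlled by the deviations of two static first-passage problems with modified (but still i.i.d.) edge weights — one stochastically larger, one smaller than $\nu$. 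Let $\bar Y$ and $\underline Y$ denote the corresponding minimum over the $2d$ edges at the origin. The second step is to check that $\E[Y^p]<\infty$ implies $\E[\bar Y^p]<\infty$ (and trivially $\E[\underline Y^p]<\infty$, since $\underline Y\le Y$): since $N_e$ has exponential tails and, conditionally on $N_e=k$, $\bar\tau_e$ is a maximum of $k+1$ i.i.d.\ copies of $\tau_e$, one has $\Pr(\bar\tau_e>t)\le\sum_k \Pr(N_e=k)(k+1)\Pr(\tau_e>t)$, and a short computation shows the resulting mixed moment condition on $\bar Y$ reduces to $\E[Y^p]<\infty$ up to constants. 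Actually one must be slightly careful because $\bar Y$ is the minimum of the $\bar\tau_e$'s over edges at the origin, and the $N_e$ for different $e$ are independent; the cleanest route is to note $\bar Y\le \min_e \bar\tau_e \le \max_j(\text{min over }e\text{ of the }j\text{-th resampled value})$ restricted to $j\le \max_e N_e+1$, so $\bar Y$ is at most a maximum of $\max_e N_e+1$ i.i.d.\ copies of $Y$, and $\max_e N_e$ over the $2d$ edges again has exponential tails; hence $\E[\bar Y^p]<\infty$.

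The third and central step is the union bound over time. Partition $[0,1]$ into $\lceil |z|^\alpha\rceil$ subintervals of length $\approx|z|^{-\alpha}$ for a suitable exponent $\alpha>0$. On each subinterval $I$, we want to say that $\sup_{s\in I}|T^{(s)}_\Hc(0,z)-\muZ(z)|>\eps|z|$ forces either the travel time at the left endpoint of $I$ to deviate by more than $\tfrac{\eps}{2}|z|$, or the environment to change substantially \emph{somewhere along a near-optimal path} during $I$. To make the latter quantitative, one uses that a near-geodesic for $T^{(s)}(0,z)$ uses at most $C|z|$ edges with probability at least $1-e^{-c|z|}$ (this is standard and is part of the machinery behind Theorem~\ref{thm:LDEcone}); the probability that a given edge resamples during an interval of length $|z|^{-\alpha}$ is $\approx|z|^{-\alpha}$, so the expected number of edges along the geodesic that flip during $I$ is $\approx|z|^{1-\alpha}$. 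Choosing $\alpha$ close to $1$ — say $\alpha=1-\delta$ for small $\delta$ — makes this expectation small, and a more careful argument (conditioning on the static configuration at the left endpoint and on the geodesic, then bounding the contribution of flips by a concentration or simple first-moment estimate, together with the fact that a single extra edge weight is integrable) shows that the ``environment changed a lot on a near-geodesic'' event has probability at most $e^{-c|z|^\delta}$ or so, which is superpolynomially small and harmless after multiplying by the $|z|^\alpha$ subintervals and summing $|z|^{p-d}$ over $z\in\iVH$. Meanwhile the ``deviation at a deterministic time'' contribution is handled by $\lceil|z|^\alpha\rceil$ copies of the static estimate: by Theorem~\ref{thm:LDEcone} (the linear large deviations bound with its stretched-exponential or exponential rate in $|z|$, valid under $\E[Y^p]<\infty$ for the relevant tail, or under the finiteness giving the static Hsu--Robbins sum), each has probability $\Pr(|T^{(s)}_\Hc(0,z)-\muZ(z)|>\tfrac{\eps}{2}|z|)$, and $|z|^\alpha$ times this, summed against $|z|^{p-d}$, is still finite provided we absorb the extra $|z|^\alpha$ — this is where one genuinely needs the large deviations estimate rather than only the static Hsu--Robbins sum, because the static sum alone is merely finite, not summable with a polynomial factor to spare. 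The point is that Theorem~\ref{thm:LDEcone} gives $\Pr(|T^{(s)}_\Hc(0,z)-\muZ(z)|>\eps|z|)\le g(|z|)$ for a function decaying at least like a fixed negative power faster than any polynomial once the moment condition holds, so the extra polynomial factor $|z|^\alpha$ is negligible; one then re-invokes Theorem~\ref{thm:HREcone}-type bookkeeping — or rather, tracks through the proof of Theorem~\ref{thm:HREcone} — to see that the modified sum with a polynomial factor inflated by $|z|^\alpha$ converges iff $\E[Y^p]<\infty$, using the modified moment input $\E[\bar Y^p]<\infty$ and $\E[\underline Y^p]<\infty$ established in step two.

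The main obstacle I anticipate is the quantitative control of the ``environment changes along a near-geodesic during a short time interval'' event — specifically, making sure that the contribution of the finitely many edges that do flip during $I$ to the change in travel time is both small and controlled by an integrable quantity, uniformly over the (random) choice of near-geodesic. The subtlety is that the geodesic at time $s$ and the geodesic at the left endpoint of $I$ may differ, so one cannot simply fix a path; the standard fix is to note that the \emph{increase} of $T^{(s)}(0,z)$ over $T^{(\text{left endpoint})}(0,z)$ is at most the sum, over edges on the left-endpoint geodesic that flipped, of the new weights (reroute nowhere), and the \emph{decrease} is at most the sum over edges on the $s$-geodesic that flipped of the old weights — and each such sum is a sum of $\lesssim|z|^{1-\alpha}$ i.i.d.\ integrable weights, hence $o(|z|)$ with overwhelming probability by a large-deviations bound for i.i.d.\ sums under only a moment assumption (which is exactly the kind of estimate Theorem~\ref{thm:LDEcone} or its proof supplies). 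Getting the exponents to line up — $\alpha$ large enough that $|z|^{1-\alpha}$ weights sum to $o(\eps|z|)$ with probability beating $|z|^{-(p-d)-\alpha}$, yet $\alpha$ not so large that the $|z|^\alpha$ union bound overwhelms the large-deviation decay of the static term — is the delicate bookkeeping, but there should be a comfortable window since the static decay is superpolynomial. I would also need to handle the low-weight regime $\muZ\equiv0$ separately or check it is not an issue for the interior-point statement, as elsewhere in the paper.
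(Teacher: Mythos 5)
Your forward implication and your first two steps (the envelope coupling $\underline{T}\le T^{(s)}\le \bar T$ with $\bar\tau_e=\sup_{s}\tau_e(s)$, and the moment transfer $\E[Y^p]<\infty\Rightarrow\E[\bar Y^p]<\infty$) match the paper's strategy, but your central step~3 has a genuine gap. The time-discretization into $\lceil|z|^\alpha\rceil$ subintervals relies on the claim that the static estimate of Theorem~\ref{thm:LDEcone} decays superpolynomially (``stretched-exponential or exponential rate''). It does not: under only $\E[Y^p]<\infty$ the bound is $M\,\Pr(Y>x/M)+M/x^q$, and the first term is governed by the tail of $Y$, which may be essentially of order $x^{-p}$. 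The whole point of Theorem~\ref{thm:HREcone} is that $\sum_z|z|^{p-d}\Pr(Y>|z|/M)$ converges \emph{exactly} when $\E[Y^p]<\infty$, with no polynomial room to spare; multiplying by the $|z|^\alpha$ time points forces something like $\E[Y^{p+\alpha}]<\infty$, so your argument cannot deliver the sharp equivalence. The geodesic-flip control has further problems: the ``sum of $\lesssim|z|^{1-\alpha}$ i.i.d.\ integrable weights'' step presumes $\E[\tau_e]<\infty$, which does not follow from $\E[Y^p]<\infty$ (only the minimum over $2d$ edges is assumed to have moments; a single edge weight may be non-integrable), and the $e^{-c|z|}$ bound on geodesic length needs extra hypotheses and care in the $\muZ\equiv0$ regime, which you defer but never resolve.

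The fix is to stay with the envelope coupling you already set up, but over a \emph{short} interval $[0,\delta]$ rather than $[0,1]$, and to invoke the continuity of the time constant with respect to weak convergence of the weight distribution, equation~\eqref{eq:mucontinuity}. As $\delta\to0$ the laws of $\hat\tau_e=\sup_{s\in[0,\delta]}\tau_e(s)$ and of the zeroed-out lower envelope $\bar\tau_e=\tau_e(0)1_{\{N_e(\delta)=0\}}$ both converge weakly to the law of $\tau_e$, so their time constants $\hat\mu,\bar\mu$ can be brought within $\eps/4$ of $\muZ$ uniformly on the unit sphere. Then
$\Pr\bigl(\sup_{s\in[0,\delta]}|T^{(s)}_\Hc(0,z)-\muZ(z)|>\eps|z|\bigr)$ is bounded by the upper-deviation probability for $\hat T$ relative to $\hat\mu$ plus the lower-deviation probability for the lower envelope relative to $\bar\mu$, and Theorem~\ref{thm:HREcone} applied to these two i.i.d.\ static environments (using your moment lemma for $\hat Y$) gives summability. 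Covering $[0,1]$ by the \emph{constant} number $\lceil1/\delta\rceil$ of such intervals --- a number independent of $z$ --- avoids any polynomial union bound and preserves the sharp condition $\E[Y^p]<\infty$. The missing idea in your write-up is precisely this use of the Cox--Kesten continuity of $\muZ$ in the distribution, which lets the comparison environments have nearly the correct time constant instead of having to track the evolution of geodesics in time.
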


When $\E[Y^d]$ is finite Theorem~\ref{thm:DST} shows that the expected number of sites $z\in\iVH$ whose travel time deviates by as much as $\eps|z|$ away from $\muZ(z)$ at some time during the interval $[0,1]$ is finite. As a consequence,~\eqref{eq:Fubini} holds `for every $s\ge0$', and in the regime that $\muZ\not\equiv0$, the dynamical analogue of~\eqref{eq:shapeinclusion} holds at all times $s\ge0$ with probability one.

\subsection{Notation}

We remind the reader that $|\cdot|$ will refer to Euclidean distance. We will further use $\|\cdot\|$ to denote $\ell^1$-distances on $\Z^d$. The unit vectors in the $d$ coordinate directions will be denoted by $\ebf_1,\ebf_2,\ldots,\ebf_d$, and we finally recall that $Y$ will denote the minimum weight among the $2d$ edges adjacent to the origin.

\subsection{Outline of the paper}

The rest of this paper is organized as follows. In the next section we provide some preliminary results concerning the time constant and the geometry of the lattice. We thereafter move on and derive a large deviations estimate for travel times on cones in Section~\ref{sec:LDE}. This result will later be the key to prove both Theorem~\ref{thm:HREcone} and~\ref{thm:Lp-ST} (in Section~\ref{sec:mainproofs}), as well as Theorem~\ref{thm:DST} (in Section~\ref{sec:dfpp}). A formal proof of Proposition~\ref{prop:regime} is omitted, but see however the closing remarks in Section~\ref{sec:LDE} and~\ref{sec:mainproofs}. The paper is finally ended in Section~\ref{sec:discussion} with a further discussion of some necessary and sufficient conditions under which Theorem~\ref{thm:HREcone},~\ref{thm:Lp-ST} and~\ref{thm:DST} can be extended to include boundary points.

\section{Preliminary observations}\label{sec:preliminaries}

A very central concept in first-passage percolation is the subadditive property, which says that
\bea
T(x,y)\,\le\, T(x,z)+T(z,y),\quad\text{for every }x,y,z\in\Z^d.
\eea
Its importance was recognized already in~\cite{hamwel65}, and was the inspiration for Kingman~\cite{kingman68} to derive his Subadditive Ergodic Theorem, from which the almost sure existence of the limit of $\lim_{n\to\infty}T(0,nz)/n$ can be deduced. Subadditivity of travel times carries over to the time constant. Together with the existence of the limit in~\eqref{eq:timeconstant}, one can show that the time constant satisfies the properties of a semi-norm. Clearly $\muZ(0)=0$, but also
\bea
\begin{aligned}
&\muZ(ax)\,=\,|a|\muZ(x) & \text{for all }a\in\R\text{ and }x\in\R^d,\\
&\muZ(x+y)\,\le\,\muZ(x)+\muZ(y) & \text{for all }x,y\in\R^d,\\
&\big|\muZ(y)-\muZ(x)\big|\,\le\,\muZ(\ebf_1)\|y-x\| & \text{for all }x,y\in\R^d.
\end{aligned}
\eea
(See~\cite{kesten86} for details.) Note that the final property follows from the previous two, and shows that $\muZ:\R^d\to[0,\infty)$ is Lipschitz continuous. The time constant is further known to be continuous with respect to weak convergence of the weight distribution. Let $F_1,F_2,\ldots,F_\infty$ denote distribution functions for the weights, and let $\muZ^F$ denote the time constant for the $\Z^d$ lattice associated with the distribution $F$. Then, if $F_n\to F_\infty$ weakly,
\be\label{eq:mucontinuity}
\lim_{n\to\infty}\muZ^{F_n}(x)=\muZ^{F_\infty}(x),\quad\text{uniformly in $x$ on compact sets}.
\ee
This convergence was first proved for $d=2$ and $x=\ebf_1$, by Cox and Kesten \cite{cox80,coxkes81}, and later extended to all directions and $d\ge2$ \cite{kesten86}. That the convergence is uniform on compact sets follows easily from the Lipschitz continuity of $\mu$: Given $\eps>0$, let $K\subset\R^d$ be compact and let $K'$ be an $\eps$-dense finite subset of $K$ (each element of $K$ is at most at $\ell^1$-distance $\eps$ from an element in $K'$). Pick $N<\infty$ such that $\big|\mu^{F_n}(x)-\mu^F(x)\big|<\eps$ for all $x\in K'\cup\{\ebf_1\}$ and $n\ge N$. Lipschitz continuity of $\mu$ then gives that $\big|\mu^{F_n}(x)-\mu^F(x)\big|<(2\mu^F(\ebf_1)+2)\eps$ for all $x\in K$ and $n\ge N$. Since $\eps>0$ was arbitrary, this proves the claim.

\subsection{Lattice geometry}\label{sec:latticegeo}

In order for present results for first-passage percolation on subgraphs of comparable strength to those of the $\Z^d$ lattice, the subgraphs under considerations will need to have a similar degree of connectivity. We will explore some sufficient conditions for a `good' degree of connectivity in a couple of lemmas.

\begin{lma}\label{lma:connected}
For $z\in\Z^d$ and $r\ge\sqrt{d}$, the subgraph of $\Z^d$ induced by $\bigcup_{a\in[0,1]}B(az,r)$ is connected.
\end{lma}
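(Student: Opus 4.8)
The plan is to show that the induced subgraph $\Gc$ on the vertex set $V:=\big(\bigcup_{a\in[0,1]}B(az,r)\big)\cap\Z^d$ is connected by exhibiting, for each vertex $w\in V$, a lattice path inside $V$ from $w$ to the origin $\org$ (note $\org\in V$ since $r\ge\sqrt d>0$). Since path-connectivity to a single vertex implies connectivity, this suffices.

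First I would set up coordinates along the segment $[\org,z]$. Write $\ell(a):=az$ for $a\in[0,1]$, so $V$ is the set of lattice points within Euclidean distance $r$ of this segment. The key geometric observation is that the tube $\bigcup_{a\in[0,1]}B(az,r)$ is "fat" enough — radius at least $\sqrt d$, which is the diameter of a unit cube in $\Z^d$ — that it contains a lattice path tracking the segment. Concretely, given $w\in V$, let $a_w\in[0,1]$ achieve $\dist(w,[\org,z])=|w-a_wz|\le r$. I would first move from $w$ to a lattice point $w'$ within distance $\sqrt d$ of the line through $\org$ and $z$ while staying in $V$: since $|w-a_wz|\le r$, the straight lattice "staircase" from $w$ towards $\lfloor a_wz\rceil$ (rounding each coordinate) stays within distance $|w-a_wz|+\sqrt d$... — actually the cleaner route is: every point on the segment from $w$ to (the nearest lattice point to) $a_wz$ lies within $r$ of $a_wz$, hence in the tube, and a monotone lattice path shadowing a segment of Euclidean length $L$ stays within $\ell^1$-distance, hence within Euclidean distance $\sqrt d$ rounded, of that segment — so I would choose the rounding so that the whole staircase lies in $V$. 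This reduces matters to connecting lattice points lying within $\sqrt d$ of the central segment $[\org,z]$ itself.

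Next I would connect two such "central" lattice points $p,q$ (each within $\sqrt d$ of $[\org,z]$) to each other inside $V$. Parametrize: there are $a_p,a_q\in[0,1]$ with $|p-a_pz|\le\sqrt d$ and $|q-a_qz|\le\sqrt d$. Walk a monotone lattice path that shadows the segment from $p$ to $q$; I claim every vertex on it lies within $r$ of $[\org,z]$. Indeed such a vertex $v$ lies within $\sqrt d$ of the segment $[p,q]$, which in turn lies within $\sqrt d$ of $[\org,z]$ (by convexity of distance-to-a-segment, since both endpoints do), so $v$ is within $2\sqrt d\le r$... but $r\ge\sqrt d$ only, not $2\sqrt d$ — so I must be more careful and shadow the segment $[a_pz, a_qz]\subset[\org,z]$ directly: from $p$, step to the lattice point nearest $a_pz$, then follow a lattice path staying within $\sqrt d$ of $[a_pz,a_qz]$ (every point of which is actually \emph{on} $[\org,z]$), then step to $q$. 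Each vertex on this composite path is within $\max\{\,|p-a_pz|,\ \sqrt d,\ |q-a_qz|\,\}\le\sqrt d\le r$ of $[\org,z]$, hence in $V$. Applying this with $q=\org$ (and $a_q=0$) finishes the argument.

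The main obstacle is the geometric bookkeeping in the middle step: verifying that a monotone lattice "staircase" shadowing a Euclidean segment stays within $\ell^1$-distance (hence Euclidean distance $\le\sqrt d$ after accounting for the unit-cube diameter) of that segment, uniformly along its length, and that the segments one shadows can always be taken to be sub-segments of $[\org, z]$ (where the tube has full radius $r\ge\sqrt d$) rather than chords that stray. I expect this to be a short but slightly fiddly lemma about rounding a point of a line segment to a nearby lattice point and interpolating; everything else (convexity of the distance-to-a-convex-set function, and the reduction of connectivity to single-vertex path-connectivity) is routine.
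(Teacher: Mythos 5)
Your strategy is the same as the paper's: build a connected set of lattice points hugging the central segment $\{az:a\in[0,1]\}$ (the corners of the unit cubes meeting the segment, all within $\sqrt d\le r$ of it, mutually connected by cube edges), and then route an arbitrary vertex $w$ into this central set. The one step that does not close as written is exactly the one you flag. Your containment argument for the staircase from $w$ measures distance to the \emph{chord} $[w,a_wz]$: the staircase is within $\sqrt d$ of the chord, the chord is within $r$ of $a_wz$, so all you get is that the staircase lies in $B(a_wz,r+\sqrt d)$, which does not place it in the vertex set. No choice of "rounding" repairs this, because the estimate is taken against the wrong reference object. The same defect recurs in your last paragraph, where "step to the lattice point nearest $a_pz$" is not a single lattice step (those two points can be as far apart as $\frac{3}{2}\sqrt d$) and so again needs a path whose containment must be argued.

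The fix is short but is a different estimate: measure distance to the \emph{point} $a_wz$ rather than to a chord. Walk from $w$ coordinatewise towards the nearest lattice point $\hat y$ to $a_wz$ (equivalently, at each step change one coordinate $w_i$ lying outside $[\lfloor(a_wz)_i\rfloor,\lceil(a_wz)_i\rceil]$ by one unit towards $(a_wz)_i$). Since $|\hat y_i-(a_wz)_i|\le 1/2$ for every $i$, each such step does not increase any coordinate distance $|w_i-(a_wz)_i|$, so the entire walk stays inside the single ball $B(a_wz,r)$ and terminates at a corner of the unit cube containing $a_wz$ --- a vertex of the central set. This is how the paper's (terse) proof closes the argument, via the one ball $B(az,r)$ that contains both $w$ and the cube around $az$. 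With that replacement your proof goes through; the reduction to path-connectivity to a single vertex and the cube-chain along $[0,z]$ are fine and coincide with the paper's argument.
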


\begin{proof}
Let $U$ denote the union of all unit cubes whose corners have integer coordinates, and that intersect the straight line segment between the origin and $z$. Since $r\ge\sqrt{d}$, it follows that $U\subset\bigcup_{a\in[0,1]}B(az,r)$, and hence that all corners of a cube in $U$ are connected in the induced graph. Since each ball $B(az,r)$ is convex and $az$ is contained in some unit cube that $U$ is made up by, it follows that the graph is connected.
\end{proof}

The same idea can be used to give a lower bound on $r$ for the origin and $z$ to be connected by $2d$ disjoint paths. A somewhat more general statement is given next.

\begin{lma}\label{lma:2dpaths}
Let $u\in\R^d$ have unit length, $r\ge4\sqrt{d}$ and $0\le b\le c$ be real numbers. Between any pair of points $y$ and $z$ in $\Z^d$ at distance at most $\sqrt{d}$ of the line segment $\{au:a\in[b,c]\}$ there is a path of length $\|z-y\|$, and between each consecutive pair of points in the path there are $2d$ edge-disjoint paths of length $9$, all contained in $\bigcup_{a\in[b,c]}B(au,r)$. 
\end{lma}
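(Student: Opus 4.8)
The plan is to reduce everything to an explicit local construction in $\Z^d$ and then paste copies of it along a discretized version of the segment $\{au:a\in[b,c]\}$. First I would handle the easy direction: choosing a lattice path from $y$ to $z$ of $\ell^1$-length $\|z-y\|$ that stays within distance $\sqrt d$ of the segment. Since $y$ and $z$ themselves lie within $\sqrt d$ of the segment and a monotone lattice path between two points can be made to follow the straight line between them within Euclidean distance $\sqrt d$ (change one coordinate at a time, in the order dictated by the direction of $z-y$), such a path exists; I would verify that along the way every vertex stays within, say, $2\sqrt d$ of the segment. (One should be slightly careful: the segment $\{au:a\in[b,c]\}$ may not contain the foot of the perpendicular from $y$ or $z$, so the relevant distance bound should be stated as distance to the \emph{line} through $u$, truncated appropriately — I would phrase the intermediate vertices as lying within $2\sqrt d$ of the segment, which is what is needed for the ball-union containment below. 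This is a routine bookkeeping point, not a real obstacle.)

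The heart of the matter is the claim that between two $\ell^1$-adjacent lattice points $v$ and $v'=v\pm\ebf_i$ one can find $2d$ edge-disjoint paths, each of length $9$, all contained in a ball of radius $4\sqrt d$ centered near $v$. I would prove this by a fixed combinatorial gadget: inside the discrete box $v+\{-2,\dots,2\}^d$ (which has $\ell^\infty$-radius $2$, hence Euclidean radius $2\sqrt d$, well inside $4\sqrt d$ once we account for the offset between $v$ and the segment), route $2d$ vertex-interior-disjoint paths from $v$ to $v'$. Concretely, the direct edge $\{v,v'\}$ is one path (length $1$, which we pad to length $9$ by detouring — length parity can be fixed since on $\Z^d$ one can insert a back-and-forth step, or simply demand ``length at most $9$'' and note the statement allows the common phrasing; I would check whether the paper wants exactly $9$ and if so build each path with exactly $9$ edges by a small deliberate detour). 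The remaining $2d-1$ paths each leave $v$ along a distinct edge, travel a couple of steps, cross over to the ``slab'' near $v'$, and come back in along a distinct edge at $v'$; because there are $2d$ edges at $v$ and $2d$ at $v'$ and we use each exactly once, edge-disjointness near the endpoints is automatic, and by keeping the middle portions in disjoint coordinate-hyperplane translates (offset in the $2d-2$ directions orthogonal to $\ebf_i$, plus the two orientations along $\ebf_i$) the interiors are disjoint. This is the step I expect to be the main obstacle: it is elementary but fiddly, and getting simultaneously (i) exactly/at most length $9$, (ii) $2d$ of them, (iii) pairwise edge-disjoint, and (iv) all inside radius $4\sqrt d$ requires a careful explicit description; $9 = 1 + 2\cdot 4$ suggests the intended construction is ``out $4$ steps, over, back $4$ steps,'' and I would follow that template, checking that $4$ steps in $\ell^1$ from $v$ stays within $\ell^2$-distance $4 < 4\sqrt d$ of $v$, leaving room for the $\sqrt d$ slack between $v$ and the line and another $\sqrt d$ for endpoint effects.

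Finally I would assemble the global statement. Having fixed the $y$-to-$z$ path $v_0=y,v_1,\dots,v_m=z$ with $m=\|z-y\|$ and every $v_k$ within $2\sqrt d$ of the line through $u$ (and with $0 \le b \le c$ ensuring the relevant portion of that line is the segment), I would apply the local gadget at each consecutive pair $(v_k,v_{k+1})$: the $2d$ edge-disjoint length-$9$ connectors sit in a ball of radius $2\sqrt d$ about $v_k$, which is contained in $B(a_k u, 2\sqrt d + 2\sqrt d) \subseteq B(a_k u, 4\sqrt d)$ for the appropriate $a_k\in[b,c]$ (the nearest point of the segment to $v_k$), hence inside $\bigcup_{a\in[b,c]} B(au,r)$ since $r \ge 4\sqrt d$. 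Edge-disjointness of the $2d$ paths is required only \emph{between} a given consecutive pair, which the gadget provides; across different pairs the statement makes no disjointness demand, so no further work is needed there. This completes the construction; the only genuine content is the local gadget, and I would present it with an explicit picture or coordinate formula for the $2d$ paths, defer the length-padding to exactly $9$ to a one-line remark, and otherwise keep the geometric containments at the level of triangle-inequality estimates.
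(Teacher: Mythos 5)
Your proposal follows essentially the same route as the paper: a monotone lattice path tracking the segment (the paper builds it from the unit cubes meeting the segment from $y$ to $z$, giving vertices within $2\sqrt d$ of $\{au:a\in[b,c]\}$), plus a local gadget of $2d$ edge-disjoint paths of length at most $9$ between consecutive vertices, with containment by the triangle inequality (the paper keeps the gadget within $\sqrt5$ of an endpoint, so $2\sqrt d+\sqrt5\le 4\sqrt d$, while your $\ell^\infty$-radius-$2$ box gives $2\sqrt d+2\sqrt d=4\sqrt d$, equally fine). One caveat: drop the literal ``out $4$ steps, over, back $4$ steps'' template (taken along a single coordinate it exits your box and can push points beyond $4\sqrt d$ from the segment for $d=2,3$); the natural gadget is the direct edge, the $2(d-1)$ length-$3$ detours through $v\pm\ebf_j$ with $j\neq i$, and one length-$9$ loop leaving $v$ through $v-\ebf_i$ and entering $v'$ through $v'+\ebf_i$, which stays within $\sqrt5$ of the endpoints --- a construction the paper, like you, asserts rather than writes out.
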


\begin{proof}
Let $U$ denote the union of all unit cubes whose corners have integer coordinates, and that intersect the straight line segment between $y$ and $z$. A path between $y$ and $z$ of length $\|z-y\|$ can now be chosen by walking along edges of the unit cubes that $U$ is made up by, in each step decreasing the $\ell^1$-distance to its target by one. Since no point on the line segment between $y$ and $z$ lies further than $\sqrt{d}$ from a point on the line segment $\{au:a\in[b,c]\}$, no point on the path may lie further than distance $2\sqrt{d}$ from $\{au:a\in[b,c]\}$.

Note that there are $2d$ paths of length at most $9$ between the origin and $\ebf_1$ that remain within distance $\sqrt{5}$ of either the origin or $\ebf_1$. The analogous statement holds for each pair of consecutive points on the path between $y$ and $z$. Since each such point is at distance at most $2\sqrt{d}$ of the line segment $\{au:a\in[b,c]\}$, all $2d$ paths between any pair of consecutive points on the path between $y$ and $z$ may lie further than $2\sqrt{d}+\sqrt{5}$ from $\{au:a\in[b,c]\}$.
\end{proof}

Recall that $Y$ denotes the minimum of $2d$ independent random variables distributed as $\tau_e$. The latter lemma allows us to compare the probability tail of the travel time between points in a thin cylinder with that of $Y$.

\begin{prop}\label{prop:Ycond}
Let $b\le c$ and $r\ge4\sqrt{d}$ be real numbers, and $u$ a unit vector in $\R^d$. Let $\Gc$ denote the subgraph of the $\Z^d$ lattice induced by $\bigcup_{a\in[b,c]}B(au,r)$. For any pair of lattice points $y$ and $z$ at distance at most $\sqrt{d}$ from the line segment $\{au:a\in[b,c]\}$ we have
$$
\Pr\big(T_\Gc(y,z)>9\|z-y\|x\big)\,\le\, 9^{2d}\,\|z-y\|\,\Pr(Y>x).
$$
\end{prop}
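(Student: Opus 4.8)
The plan is to build a single path from $y$ to $z$ inside $\Gc$ whose passage time we can control by the weights of the individual short paths provided by Lemma~\ref{lma:2dpaths}, and then to union-bound over the edges along the way. First I would invoke Lemma~\ref{lma:2dpaths}: since $y$ and $z$ lie within distance $\sqrt d$ of the segment $\{au:a\in[b,c]\}$, there is a path $y=w_0,w_1,\ldots,w_m=z$ with $m=\|z-y\|$, and for each consecutive pair $w_{i-1},w_i$ there are $2d$ edge-disjoint paths of length $9$ joining them, all lying in $\bigcup_{a\in[b,c]}B(au,r)\subset\Vc_\Gc$. For a fixed $i$, let $\gamma_i^{(1)},\ldots,\gamma_i^{(2d)}$ denote these $2d$ disjoint paths and set $T_i:=\min_{1\le j\le 2d}T(\gamma_i^{(j)})$; then $T_\Gc(w_{i-1},w_i)\le T_i$, and by concatenation
$$
T_\Gc(y,z)\,\le\,\sum_{i=1}^{m}T_i.
$$

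The key observation is a domination of each $T_i$ by an independent copy of $Y$. Because the $2d$ paths $\gamma_i^{(j)}$ are edge-disjoint and each has exactly $9$ edges, and since each path's passage time is at most $9$ times its largest edge-weight, we get $T_i\le 9\,M_i$ where $M_i:=\min_{1\le j\le 2d}\max_{e\in\gamma_i^{(j)}}\tau_e$. Now $M_i$ is the minimum of $2d$ i.i.d.\ random variables (the per-path maxima), each of which stochastically dominates a single weight $\tau_e$; hence $M_i$ is stochastically dominated by the minimum of $2d$ i.i.d.\ copies of $\tau_e$, i.e.\ $\Pr(M_i>x)\le\Pr(Y>x)$ for every $x\ge0$. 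This gives the pointwise bound $T_i\le 9M_i'$ in distribution with $\Pr(M_i'>x)\le\Pr(Y>x)$; more simply, $\Pr(T_i>9x)\le\Pr(M_i>x)\le\Pr(Y>x)$ directly. (Here the constant $9^{2d}$ will re-enter below; one can also bound $\Pr(M_i>x)$ crudely by the cruder but more transparent $\Pr(\exists j:\max_{e\in\gamma_i^{(j)}}\tau_e>x)$ — but the sharper minimum-over-$2d$-paths bound is what yields the factor $\Pr(Y>x)$ rather than $\Pr(\tau_e>x)$.)

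To finish, I would estimate the tail of the sum by a union bound over the $m=\|z-y\|$ steps. If $\sum_{i=1}^m T_i>9mx$ then $T_i>9x$ for at least one $i$, so
$$
\Pr\big(T_\Gc(y,z)>9\|z-y\|x\big)\,\le\,\Pr\Big(\bigcup_{i=1}^{m}\{T_i>9x\}\Big)\,\le\,\sum_{i=1}^m\Pr(T_i>9x)\,\le\,\|z-y\|\,\Pr(Y>x),
$$
which is even slightly stronger than claimed. The factor $9^{2d}$ in the statement leaves room for a less careful argument — e.g.\ if one instead expands $\{T_i>9x\}$ as $\{$all $2d$ paths $\gamma_i^{(j)}$ have some edge exceeding $x\}$ and, rather than using disjointness, bounds via $\Pr(\text{a fixed edge}>x)$ raised to an appropriate power, one picks up the combinatorial factor $9^{2d}$ counting the edges in the $2d$ length-$9$ paths. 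Either way the conclusion follows.

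The main obstacle is the stochastic-domination step: one must be careful that the $2d$ short paths for a \emph{fixed} $i$ are edge-disjoint (so that the per-path maxima are genuinely independent and $M_i$ is the minimum of $2d$ i.i.d.\ variables each dominating a single weight), which is exactly what Lemma~\ref{lma:2dpaths} guarantees. Disjointness \emph{across} different $i$ is not needed, since the union bound over $i$ does not require independence. A minor point to check is that all the paths indeed stay within $\bigcup_{a\in[b,c]}B(au,r)$, so that they are legitimate paths in $\Gc$; this too is part of the conclusion of Lemma~\ref{lma:2dpaths}, using $r\ge4\sqrt d$.
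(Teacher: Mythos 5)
Your overall skeleton (decompose the trip into $\|z-y\|$ unit steps via Lemma~\ref{lma:2dpaths}, bound each step by the minimum over the $2d$ edge-disjoint short paths, then union-bound over the steps) is exactly the paper's, but your key step is wrong: the stochastic domination is stated in the reverse direction. With $M_i:=\min_{1\le j\le 2d}\max_{e\in\gamma_i^{(j)}}\tau_e$, each per-path maximum dominates a single edge weight, and the maxima are independent by edge-disjointness, so
$$
\Pr(M_i>x)\;=\;\prod_{j=1}^{2d}\Pr\Big(\max_{e\in\gamma_i^{(j)}}\tau_e>x\Big)\;\ge\;\Pr(\tau_e>x)^{2d}\;=\;\Pr(Y>x),
$$
i.e.\ $M_i$ stochastically dominates $Y$; it is \emph{not} dominated by $Y$. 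Hence your bound $\Pr(T_i>9x)\le\Pr(Y>x)$, and with it the claimed improvement $\|z-y\|\,\Pr(Y>x)$ without the factor $9^{2d}$, is false in general: if $\Pr(\tau_e>x)=q$ is small, then $\Pr(M_i>x)=\big(1-(1-q)^9\big)^{2d}\approx(9q)^{2d}=9^{2d}\,\Pr(Y>x)$, so the factor $9^{2d}$ is genuinely needed on this route, not slack left by a ``less careful'' argument.

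The correct estimate is the one you relegate to a parenthetical fallback, and it is the paper's: for each of the $2d$ disjoint paths, a union bound over its (at most) $9$ edges gives $\Pr\big(T(\gamma_i^{(j)})>9x\big)\le 9\,\Pr(\tau_e>x)$, and independence across the paths (which \emph{does} require their edge-disjointness --- your phrase ``rather than using disjointness'' is misleading, since disjointness is precisely what lets you raise to the power $2d$) yields $\Pr\big(\min_jT(\gamma_i^{(j)})>9x\big)\le 9^{2d}\,\Pr(\tau_e>x)^{2d}=9^{2d}\,\Pr(Y>x)$. Feeding this into your (correct) union bound over the $\|z-y\|$ steps gives the stated inequality. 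So the result is recoverable, but only by replacing your ``sharper'' domination step with the per-path union bound; as written, the main line of your argument does not prove the proposition, and the strengthened bound you announce is not true.
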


\begin{proof}
Let $v_0=y$, $v_{\|z-y\|}=z$, and let $v_1,v_2,\ldots,v_{\|z-y\|-1}$ denote the intermediate points on the path between $y$ and $z$ mentioned in Lemma~\ref{lma:2dpaths}. Each consecutive pair of points is joint by $2d$ edge disjoint paths of length at most 9. Denote these paths by $\Gamma_1,\Gamma_2,\ldots,\Gamma_{2d}$, and assume that $\Gamma_1$ is the longest among them. Clearly
$$
\Pr\Big(\min_{i=1,2,\ldots,2d}T(\Gamma_i)>9x\Big)\;\le\;\Pr\big(T(\Gamma_1)>9x\big)^{2d}\;\le\;9^{2d}\,\Pr(\tau_e>x)^{2d}.
$$
Next, note that $T_\Gc(y,z)$ is dominated by the sum of $T_\Gc(v_{i-1},v_i)$ for $i=1,2,\ldots,\|z-y\|$. However, each term in the sum is dominated by a random variables distributed as $\min_{i=1,2,\ldots,2d}T(\Gamma_i)$. Consequently,
\bea
\Pr\big(T_\Gc(y,z)>9\|z-y\|x\big)\;\le\;\|z-y\|\,\Pr\Big(\min_{i=1,2,\ldots,2d}T(\Gamma_i)>9x\Big)\;\le\;9^{2d}\,\|z-y\|\,\Pr(Y>x).\qedhere
\eea
\end{proof}

\subsection{Comparison between lattice and subgraphs}

After we in the above subsection have identified some simple restrictions to obtain optimal necessary and sufficient conditions for the main results of this paper, let us continue with an indication to why growth on cone-like subgraphs of the lattice has the same asymptotic rate of growth that the lattice itself. To succeed with such a quantitative comparison, we will naturally need to compare travel times on the lattice with those of a subgraph. An efficient way to handle that is to let $\{\tau_e\}_{e\in\Ec_{\Z^d}}$ be an i.i.d.\ family of edge weights associated with the edges of the $\Z^d$ lattice, and for each subgraphs $\Gc$ of the lattice $\Z^d$ let distances in $\Gc$ be determined in terms of the restriction of $\{\tau_e\}_{e\in\Ec_{\Z^d}}$ to $\Ec_\Gc$. That is, we will let $T_\Gc(\cdot,\cdot)$ denote the infimum over all weight sums with respect to $\{\tau_e\}_{e\in\Ec_{\Z^d}}$, over paths contained in $\Gc$. This convention generates a natural coupling of travel times among subgraphs of the lattice.
 Indeed if $\Gc_1$ is a subgraphs of the lattice, and $\Gc_2$ is a further subgraph of $\Gc_1$, then
$$
T_{\Gc_1}(y,z)\,\le\,T_{\Gc_2}(y,z)\quad\text{for all }y,z\in\Gc_2.
$$
We will from now on assume this coupling to be in force. As before, once $\Gc$ equals $\Z^d$ we allow ourselves to drop the subscript.

Let us next give the first hint to why travel times on a cone-like subgraph of the lattice has the same linear growth rate as travel times on the whole lattice. Given $z\in\Z^d$ and $r\ge0$, let $\Cyl(z,r)$ denote the subgraph of the lattice induced by the cylinder $\bigcup_{a\in\R}B(az,r)$. First-passage percolation on such (and more general) 1-dimensional graphs was studied in detail in~\cite{A11-1}. In particular it was proven that if $\E[Y]<\infty$ (and $r\ge4\sqrt{d}$, see Lemma~\ref{lma:2dpaths}), then $T_{\Cyl(z,r)}(0,nz)/n$ converges almost surely as $n\to\infty$ to a limiting value $\mu_{\Cyl(z,r)}$. That $\mu_{\Cyl(z,r)}$ is non-increasing in $r$ is an immediate consequence of the above coupling. It is further bounded from below by $\muZ(z)$, and hence convergent as $r$ tends to infinity. The limit indeed equals $\muZ(z)$.

\begin{prop}\label{prop:muKconv}
Assume that $\E[Y]<\infty$. For every $z\in\Z^d$
$$
\mu_{\Cyl(z,r)}\to\muZ(z)\quad\text{as }r\to\infty.
$$
\end{prop}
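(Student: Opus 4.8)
The plan is to prove convergence by a sandwiching argument: $\mu_{\Cyl(z,r)} \ge \muZ(z)$ is immediate from the coupling (paths in the cylinder are paths in $\Z^d$), so the content is the reverse inequality in the limit, namely $\limsup_{r\to\infty}\mu_{\Cyl(z,r)} \le \muZ(z)$. Since $\mu_{\Cyl(z,r)}$ is non-increasing in $r$ the limit exists; call it $\mu_\infty$. I would fix a large integer $m$ and consider a near-optimal path $\Gamma$ from $\org$ to $mz$ in $\Z^d$ with $T(\Gamma) \le \muZ(mz) + \delta m = m\,\muZ(z) + \delta m$, which exists for large $m$ by~\eqref{eq:timeconstant}. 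The path $\Gamma$ is finite, hence contained in some ball $B(\org,R_m)$. The trouble is that $R_m$ may grow with $m$, so $\Gamma$ need not lie in any fixed cylinder $\Cyl(z,r)$; thus one cannot directly feed $\Gamma$ into $\Cyl(z,r)$ for a fixed $r$.

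To get around this, I would instead argue at the level of the already-established time constant on cylinders. First I would note that $a \mapsto \mu_{\Cyl(z,r)}$, being the a.s.\ (and $L^1$) limit of $T_{\Cyl(z,r)}(\org,nz)/n$, satisfies $\mu_{\Cyl(z,r)} = \lim_n \E[T_{\Cyl(z,r)}(\org,nz)]/n = \inf_n \E[T_{\Cyl(z,r)}(\org,nz)]/n$ by subadditivity (using $\E[Y]<\infty$ and Proposition~\ref{prop:Ycond} to guarantee finite expectations). So it suffices to show: for every $\delta>0$ there is an $n$ and an $r_0$ such that $\E[T_{\Cyl(z,r)}(\org,nz)] \le n(\muZ(z)+\delta)$ for all $r \ge r_0$. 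Fix $n$ large enough that $\E[T(\org,nz)] \le n(\muZ(z)+\delta/2)$ (possible by the $L^1$ convergence in~\eqref{eq:timeconstant} when $\E[Y]<\infty$; in general one passes through~\eqref{eq:timeconstant} plus a truncation, but here we may assume $\E[Y]<\infty$). A geodesic from $\org$ to $nz$ in $\Z^d$ is a.s.\ finite but its spatial extent is a random variable $\rho_n$; one has $T_{\Cyl(z,r)}(\org,nz) = T(\org,nz)$ on the event $\{\rho_n \le r\}$. On the complementary event I would bound $T_{\Cyl(z,r)}(\org,nz)$ crudely — e.g.\ by the travel time along the straight coordinate-staircase path inside $\Cyl(z,4\sqrt d) \subset \Cyl(z,r)$, which by Proposition~\ref{prop:Ycond} has expectation at most $C_d \|nz\| \,\E[Y] < \infty$, a bound uniform in $r$. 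Hence
\[
\E[T_{\Cyl(z,r)}(\org,nz)] \;\le\; \E[T(\org,nz)] + \E\big[\mathbf{1}_{\{\rho_n > r\}}\cdot C_d\|nz\|\,Y'\big],
\]
where $Y'$ stands for the (integrable) staircase bound, and the second term tends to $0$ as $r\to\infty$ by dominated convergence since $\Pr(\rho_n > r)\to 0$. Choosing $r$ large makes the right-hand side at most $n(\muZ(z)+\delta)$, as required.

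The main obstacle, and the one point needing care, is controlling the tail of the geodesic's diameter $\rho_n$ — i.e.\ ruling out that the cheapest route from $\org$ to $nz$ wanders arbitrarily far. This is a standard but nontrivial fact: since $\E[Y]<\infty$, the travel time grows at least linearly in $\ell^1$-distance in expectation (again via the comparison in Proposition~\ref{prop:Ycond} applied along any coordinate direction from $\org$), so a path reaching distance $r$ has weight at least of order $r$ with high probability, which for $r \gg n\,\muZ(z)$ forces it to be suboptimal; equivalently, $\Pr(\rho_n > r) \to 0$. Given that, the argument closes cleanly, and monotonicity of $r \mapsto \mu_{\Cyl(z,r)}$ upgrades the limsup bound to an honest limit equal to $\muZ(z)$.
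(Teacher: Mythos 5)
Your overall route is sound and is in essence the paper's: reduce the problem to showing that, for fixed $n$, $\E[T_{\Cyl(z,r)}(0,nz)]\to\E[T(0,nz)]$ as $r\to\infty$, and then optimize over $n$ using subadditivity/Fekete (the paper phrases this as exchanging $\inf_{r}$ and $\inf_{n}$). Where you diverge is in how you prove that middle convergence, and this is where your write-up has two blemishes. First, you condition on a geodesic from $0$ to $nz$; geodesics are not guaranteed to exist under the standing assumptions (e.g.\ when $\Pr(\tau_e=0)$ is large), so you should instead take a finite $\eps$-optimal path, which costs nothing. Second, and more seriously as written, your justification of $\Pr(\rho_n>r)\to0$ is wrong: you invoke a linear lower bound on travel times ``via Proposition~\ref{prop:Ycond}'', but that proposition gives \emph{upper} bounds on cylinder travel times, and no linear lower bound holds in the regime $\muZ\equiv0$. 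Fortunately the fact you need is trivial and requires no quantitative control: for fixed $n$ the chosen (near-)optimal path is a finite path, so its diameter $\rho_n$ is an a.s.\ finite random variable and $\Pr(\rho_n>r)\to0$ automatically; combined with your integrable staircase bound (or simply with dominated/monotone convergence) this closes the argument.

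For comparison, the paper's proof avoids geodesics and tail events altogether: since any finite path lies in $\Cyl(z,r)$ for $r$ large, $T_{\Cyl(z,r)}(0,nz)\downarrow T(0,nz)$ pointwise, so monotone convergence gives $\inf_{r}\E[T_{\Cyl(z,r)}(0,nz)]=\E[T(0,nz)]$; then $\lim_{r}\mu_{\Cyl(z,r)}=\inf_r\inf_n\E[T_{\Cyl(z,r)}(0,nz)]/n=\inf_n\inf_r(\cdot)=\inf_n\E[T(0,nz)]/n=\muZ(z)$. If you replace your geodesic/diameter discussion by this observation (or by the $\eps$-optimal-path fix above), your proof is correct.
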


\begin{proof}
Clearly $T_{\Cyl(z,r)}(0,nz)$ is decreasing in $r$. Hence, for every $n$ we get
$$
T(0,nz)=\lim_{r\raw\infty}T_{\Cyl(z,r)}(0,nz)=\inf_{r\ge0}T_{\Cyl(z,r)}(0,nz),\quad\text{almost surely}.
$$
An application of the Monotone Convergence Theorem gives
$$
\E\big[T(0,nz)\big]\;=\;\lim_{r\raw\infty}\E\big[T_{\Cyl(z,r)}(0,nz)\big]\;=\;\inf_{r\ge0}\E\big[T_{\Cyl(z,r)}(0,nz)\big].
$$
By Fekete's lemma the limit of $a_n/n$ as $n\to\infty$ exists and equals $\inf_{n\ge1}a_n/n$, for any subadditive real-valued sequence $(a_n)_{n\ge1}$.
Thus, since $\mu_{\Cyl(z,r)}$ is non-increasing in $r$ we conclude that
\bea
\begin{split}
\lim_{r\raw\infty}\mu_{\Cyl(z,r)}\;&=\;\inf_{r\ge0}\inf_{n\ge1}\frac{\E\big[T_{\Cyl(z,r)}(0,nz)\big]}{n}\;=\;\inf_{n\ge1}\inf_{r\ge0}\frac{\E\big[T_{\Cyl(z,r)}(0,nz)\big]}{n}\\
&=\;\inf_{n\ge1}\frac{\E\big[T(0,nz)\big]}{n}\;=\;\muZ(z).\qedhere
\end{split}
\eea
\end{proof}

\begin{remark}
Importantly, it suffices that $\E[Y^p]<\infty$ for some $p>0$ to be able to define a time constant for $\Cyl(z,r)$, when $r$ is large enough. Also under the relaxed condition the limit of $\mu_{\Cyl(z,r)}$ as $r\to\infty$ equals $\muZ(z)$. The complete argument is presented in~\cite{A13}.
\end{remark}

\section{A large deviations estimate for cones}\label{sec:LDE}

We will in this section establish a direct relation between large deviations and the probability tails of the random variable $Y$. We will in the following couple of sections use this large deviations estimate to prove Theorem~\ref{thm:HREcone},~\ref{thm:Lp-ST} and~\ref{thm:DST}. Recall that $\Hc=\Hc(u,c)$ denotes the subgraph of the $\Z^d$ lattice induced by $\bigcup_{a\ge0}B(au,ca+4\sqrt{d})$, for some unit vectors $u$ and constant $c>0$. Since $T_{\Hc(u,c)}(0,z)$ is at least as large as the minimum among the weights assigned to the $2d$ edges adjacent to the origin, it easily follows that for every $x\ge\|z\|$
$$
\Pr\big(|T_{\Hc(u,c)}(0,z)-\muZ(z)|>\eps x\big)\,\ge\,\Pr\big(Y>(\muZ(\ebf_1)+1)x\big).
$$
The result we shall prove here provides a matching upper bound.

\begin{thm}\label{thm:LDEcone}
Assume that $\E[Y^p]<\infty$ for some $p>0$. For every $\eps>0$, unit vector $u\in\R^d$, $c>0$ and $q\ge1$ there exists $M=M(\eps,d,p,q,u)$ such that for every $z\in\iVH$ and $x\ge\|z\|$
 $$
 \Pr\big(|T_{\Hc(u,c)}(0,z)-\muZ(z)|>\eps x\big)\,\le\,M\,\Pr(Y>x/M)+\frac{M}{x^q}.
 $$
\end{thm}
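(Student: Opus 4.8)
The plan is to establish the upper bound separately for two ranges of $x$, and to combine a comparison with a thin cylinder (via Proposition~\ref{prop:Ycond}) with the continuity of the time constant under weak convergence of the weight distribution (via~\eqref{eq:mucontinuity} and Proposition~\ref{prop:muKconv}). The basic strategy is the one already employed for the $\Z^d$ lattice in~\cite{A13}, adapted to cones: first dispose of the `large $x$' regime where $\Pr(Y>x/M)$ itself controls the left-hand side, and then handle the `moderate $x$' regime $\|z\|\le x\le C|z|$ by a renormalisation/concatenation argument along a path through the cone, using the polynomial error term $M/x^q$ to absorb the contribution of atypically slow blocks.

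First I would reduce the deviation event to two one-sided events, $\{T_\Hc(0,z)-\muZ(z)>\eps x\}$ and $\{\muZ(z)-T_\Hc(0,z)>\eps x\}$. The \emph{lower} deviation $\{T_\Hc(0,z)<\muZ(z)-\eps x\}$ is the easier of the two: since $\Hc$ is a subgraph of $\Z^d$ we have $T_\Hc(0,z)\ge T(0,z)$, and the analogous large-deviations estimate on $\Z^d$ — the companion result from~\cite{A13} — gives $\Pr\big(T(0,z)<\muZ(z)-\eps x\big)\le M\Pr(Y>x/M)+M x^{-q}$, so this term is handled by citing that result. For the \emph{upper} deviation $\{T_\Hc(0,z)>\muZ(z)+\eps x\}$ one must produce, entirely \emph{inside} the cone $\Hc(u,c)$, a path from $0$ to $z$ whose passage time is not much more than $\muZ(z)$. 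Here is where the geometry of $\iVH$ matters: since $z$ lies in the interior cone $\Bc(u,c)$, the straight segment from $0$ to $z$ stays at distance $\ge \delta|z|$ from $\partial\Bc(u,c)$ for a constant $\delta=\delta(c)>0$, and after adding the slack $4\sqrt d$ there is a cylinder $\Cyl$ of radius $r$ around (a lattice approximation of) this segment with $r$ as large as we like once $|z|$ is large, all of it contained in $\Hc(u,c)$; for small $|z|$ the point $z$ is within bounded distance of $0$ and the bound is trivial after enlarging $M$. Thus $T_\Hc(0,z)\le T_{\Cyl}(0,z)$, and it remains to bound the upper deviation of $T_{\Cyl}(0,z)$.

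For the cylinder, I would fix $r=r(\eps,d,c,u)$ large enough that $\mu_{\Cyl(z,r)}\le \muZ(z)+\tfrac{\eps}{4}\|z\|$ for all relevant $z$ — this is exactly what Proposition~\ref{prop:muKconv} provides for each fixed direction, and since we need it uniformly over the compact set of directions in the cone one should phrase~\eqref{eq:mucontinuity}/Proposition~\ref{prop:muKconv} with the uniformity that the Lipschitz argument quoted after~\eqref{eq:mucontinuity} already supplies — and then chop the segment from $0$ to $z$ into $\Theta(|z|)$ consecutive blocks of bounded length, each crossed using the $2d$ edge-disjoint short paths of Lemma~\ref{lma:2dpaths}. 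Writing $T_\Cyl(0,z)\le\sum_i X_i$ with $X_i$ i.i.d.-ish (independent up to a bounded-range dependency that can be decoupled by splitting into finitely many subfamilies) and each $X_i$ stochastically dominated via Proposition~\ref{prop:Ycond} by $9\cdot(\text{a mininum of }2d\text{ weights})$, one gets a sum of independent variables with $\E[X_i^p]<\infty$. A Fuk--Nagaev / truncation estimate for sums of i.i.d.\ variables with only a $p$-th moment then yields exactly the claimed form: truncating each $X_i$ at level $x/M$ produces a deviation probability bounded by $n\,\Pr(X_1>x/M)\le M\Pr(Y>x/M)$ for the `big jump' part, while the truncated sum has mean $\le n(\muZ(z)+\tfrac\eps4\|z\|)\le \muZ(z)+\tfrac\eps2 x$ and the remaining fluctuation of order $\eps x$ is controlled by a Markov/Rosenthal inequality of order $q$, giving the $M x^{-q}$ term (here one uses $q\ge1$ and absorbs powers of the bounded block length into $M$).

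The main obstacle I expect is obtaining the bound \emph{uniformly} over all $z\in\iVH$ with the single radius $r$ and a single constant $M$: one must ensure that the cylinder of radius $r$ around the segment $0z$ genuinely sits inside $\Hc(u,c)$ for \emph{every} interior $z$ (this is where the linear opening of the cone, and the distance $\delta|z|$ from the boundary, is used, together with the $+4\sqrt d$ buffer in the definition of $\Hc$), and that the convergence $\mu_{\Cyl(z,r)}\to\muZ(z)$ is uniform in the direction $z/|z|$ — which, since $z/|z|$ ranges over a set whose closure is a compact subset of directions and $\muZ$ is a (semi-)norm, follows by a covering argument but should be stated carefully. The secondary technical point is the exact form of the large-deviations estimate for sums of i.i.d.\ summands with only a fractional moment $p>0$; when $p<1$ one cannot center, but the one-sided truncation argument still goes through because all summands are non-negative, and for the $M x^{-q}$ term one simply applies Markov's inequality of order $q$ to the truncated sum after noting its $q$-th moment is $O(n x^{q-?})$ — here I would be slightly careful and, if needed, handle $q$ large by iterating the truncation, exactly as in~\cite{A13}.
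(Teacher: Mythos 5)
There is a genuine gap, and it sits exactly at the point where the cone geometry has to be confronted. Your upper-deviation argument rests on the claim that for $z\in\iVH$ the straight segment from $0$ to $z$ stays at distance at least $\delta|z|$ from $\partial\Bc(u,c)$, so that a cylinder of arbitrarily large radius $r=r(\eps)$ around it fits inside $\Hc(u,c)$. This is false: $\iVH=\Bc(u,c)\cap\Z^d$ contains points lying on (or within distance $O(1)$ of) the lateral surface of the cone $\Bc(u,c)$, and since $\Bc(u,c)$ is scaling-invariant the whole segment $[0,z]$ then runs along that surface; the only clearance guaranteed inside the inducing region of $\Hc(u,c)$ is the fixed $4\sqrt d$ buffer, which does not grow with $|z|$ and in particular cannot exceed the radius $R$ needed to make $\mu_{\Cyl}$ within $\eps$ of $\muZ$. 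So the single-cylinder comparison $T_\Hc(0,z)\le T_{\Cyl}(0,z)$ with a wide cylinder is unavailable precisely for the interior points that cause the difficulty. A second, related problem is the uniformity you flag but do not resolve: $\mu_{\Cyl(z,r)}$ and Lemma~\ref{lma:tube} are only defined/proved for lattice (``rational'') axis directions $z\in\Z^d$, with constants depending on $z$, and no continuity of $\mu_{\Cyl(\cdot,r)}$ in the direction is established anywhere; ``follows by a covering argument'' needs some bridging estimate to move from the finitely many covered directions to an arbitrary target $z$, and that estimate is exactly what is missing from your sketch. (A more minor issue: dominating bounded-length blocks by Proposition~\ref{prop:Ycond} gives moment bounds but not a per-block mean close to $\muZ$ times the length; the blocks must have a large $\eps$-dependent length and be glued through a regenerative structure, i.e.\ you are implicitly re-proving Lemma~\ref{lma:tube} and the sketch as written would not deliver the centering.)

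The paper's proof circumvents both problems differently. It covers all but finitely many points of $\VH$ by thin cones $\bigcup_{a\ge0}B(ay,\eps a|y|)$ around finitely many directions $y\in U_N$ chosen so that $B(y,R_2)\subset\VH$ (hence cylinders of the required radius around rays through these $y$ do fit in $\Hc$), applies the cylinder estimate only along these lattice directions (Proposition~\ref{prop:tube}, built on Lemma~\ref{lma:tube} of~\cite{A13}), and then reaches the actual target via the four-term decomposition $T_\Hc(0,z)-\muZ(z)\le T_\Hc(0,y)+[T_\Dc(y,y_z)-\|y_z-y\|\muZ(y/\|y\|)]+\|z\|[\muZ(y/\|y\|)-\muZ(z/\|z\|)]+T_\Hc(y_z,z)$, where the direction mismatch is controlled by Lipschitz continuity of $\muZ$ and, crucially, the last stretch of length $O(\eps\|z\|)$ is controlled by Proposition~\ref{prop:cone}, a point-to-point linear bound inside the cone whose proof itself requires choosing $d$ orthonormal rational directions (and an extra step for irrational $u$). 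Your proposal contains no substitute for Proposition~\ref{prop:cone} or for this covering/bridging step, so the upper-deviation bound is not established for general $z\in\iVH$; the lower-deviation part (comparison with the lattice and citation of~\cite{A13}) does agree with the paper.
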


A proof of the corresponding result in the case that $\Hc$ equals the $\Z^d$ lattice is presented in parallel in~\cite{A13}. We here focus on how to extend that result to $\Hc(u,c)$. Note that deviations below and above the time constant may easily be treated separately via the identity
$$
\Pr\big(|T_\Hc(0,z)-\muZ(z)|>\eps x\big)\,=\,\Pr\big(T_\Hc(0,z)-\muZ(z)<-\eps x/2\big)+\Pr\big(T_\Hc(0,z)-\muZ(z)>\eps x/2\big).
$$
Since travel times on $\Hc$ are bounded from below by travel times on the lattice, we will be able to use the fact (from~\cite{A13}) that Theorem~\ref{thm:LDEcone} holds for the $\Z^d$ lattice to obtain an estimate on deviations below the time constant. What we instead need to do here is to provide an estimate for deviations above the time constant.

\subsection{Deviations above the time constant}

We will follow the approach used in~\cite{A13}, and work with cylinder-shaped graphs. Let $\Cyl(z,r)$, for some $z\in\Z^d$ and $r\ge\sqrt{d}$ (cf.\ Lemma~\ref{lma:connected}), denote the subgraph of the $\Z^d$ lattice induced by the infinite cylinder $\bigcup_{a\in\R^d}B(az,r)$. Let also $\Hb_n:=\{z\in\Z^d:z_1+z_2+\ldots+z_d=n\}$. The first step in the proof of Theorem~\ref{thm:LDEcone} is to obtain an estimate for deviations of travel times along cylinders. For ease of presentation we will state our first lemma for $z$ belonging to the first orthant, and note that the analogous statement holds for all $z\in\Z^d$ due to symmetry.

\begin{lma}[\cite{A13}]\label{lma:tube}
 Assume $\E[Y^p]<\infty$ for some $p>0$. For every $\eps>0$, $q\ge1$ and $z\in\N^d$, there exists $R_1=R_1(d,p,q)$ and $M_1=M_1(\eps,d,p,q,z)$ such that for every $r\ge R_1$, $n\in\N$ and $x\ge n$
 $$
 \Pr\big(T_{\Cyl(z,r)}(\Hb_0,\Hb_{n\|z\|})-n\muZ(z)>\eps x\|z\|\big)\,\le\,\frac{M_1}{x^q}.
 $$
\end{lma}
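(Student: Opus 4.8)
The plan is to reduce the cylinder travel time between the hyperplanes $\Hb_0$ and $\Hb_{n\|z\|}$ to a sum of many \emph{independent} block travel times, each of which is a travel time across a cylinder segment of bounded length, and then apply a moderate deviation / concentration argument to the sum. First I would fix $z\in\N^d$ and choose a scale $L=L(z)$ (a large multiple of $\|z\|$), partitioning the cylinder $\Cyl(z,r)$ into consecutive segments $S_1,S_2,\dots,S_m$ where $m\approx n\|z\|/L$, each segment $S_i$ being the portion of the cylinder between hyperplanes $\Hb_{(i-1)L}$ and $\Hb_{iL}$, together with designated ``portal'' lattice points $w_0,w_1,\dots,w_m$ near the cylinder axis (so $w_i$ lies within $\sqrt d$ of $(iL/\|z\|)\cdot z$, say, using Lemma~\ref{lma:2dpaths} to guarantee such points and good connectivity). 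By subadditivity,
$$
T_{\Cyl(z,r)}(\Hb_0,\Hb_{n\|z\|})\;\le\;\sum_{i=1}^{m}T_{S_i}(w_{i-1},w_i)\,+\,(\text{endpoint corrections}),
$$
where $T_{S_i}$ denotes travel time restricted to segment $S_i$; crucially the segments $S_i$ use disjoint edge sets, so the summands are i.i.d. Their common distribution is that of a cylinder-segment crossing; since $r\ge R_1$ is large, Proposition~\ref{prop:muKconv} (and the remark following it, which only needs $\E[Y^p]<\infty$) lets us choose $L$ and $R_1$ so that $\E[T_{S_1}(w_0,w_1)]$ is within $\eps L\|z\|/(10\|z\|)$ of $L\cdot\muZ(z)/\|z\|$ times the appropriate normalization — i.e.\ the per-segment mean is below $(\muZ(z)+\eps')$ times its ``length''.

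Next I would establish a tail bound for a single segment crossing: by Proposition~\ref{prop:Ycond} applied to the thin sub-cylinder, $\Pr(T_{S_1}(w_0,w_1)>t)\le C(d,L,z)\,\Pr(Y>t/C)$, which together with $\E[Y^p]<\infty$ gives $\E[T_{S_1}(w_0,w_1)^p]<\infty$ — a $p$-th moment, where $p>0$ may be small. The heart of the argument is then a Fuk--Nagaev / truncation estimate for the sum $\sum_i X_i$ of $m$ i.i.d.\ copies $X_i$ with only a $p$-th moment: split each $X_i=X_i\mathbf 1\{X_i\le x\}+X_i\mathbf 1\{X_i>x\}$. The event $\{X_i>x \text{ for some }i\}$ has probability at most $m\,\Pr(X_1>x)\le m\,C\,\Pr(Y>x/C)$; but we are told $x\ge n\ge m\cdot(\text{const})$, and $\E[Y^p]<\infty$ forces $\Pr(Y>x/C)=o(x^{-p})$, which I would need to convert into the clean bound $M_1/x^q$. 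This is where the parameter $q\ge1$ and the constant $R_1$ depending on $q$ enter: by taking $r$ (hence the number of disjoint parallel paths available via Lemma~\ref{lma:2dpaths}) large, one drives the per-segment tail exponent up, so that after the $2d$-fold (or higher) independence in Proposition~\ref{prop:Ycond}-type estimates the truncated-large-jump contribution is $O(x^{-q})$. For the truncated sum $\sum_i X_i\mathbf 1\{X_i\le x\}$, its mean is at most $m\,\E X_1\le m\cdot(\muZ(z)+\eps')\cdot(\text{length})$ which I have arranged to be $\le n\muZ(z)+\frac\eps2 x\|z\|$ (using $x\ge n$), and then a Markov/Chebyshev bound on the centered truncated sum — its variance is at most $m\,\E[X_1^2\mathbf 1\{X_1\le x\}]\le m\,x^{2-p}\,\E[X_1^p]$ — gives a deviation probability of order $m x^{2-p}/(x\|z\|)^2\le M_1/x^{q}$ after again using $m\le x$ and choosing $r$ large enough that enough moments are available. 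Summing the two contributions yields $\Pr(T_{\Cyl(z,r)}(\Hb_0,\Hb_{n\|z\|})-n\muZ(z)>\eps x\|z\|)\le M_1/x^q$.

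The main obstacle is the weakness of the moment hypothesis: with only $\E[Y^p]<\infty$ for some possibly tiny $p>0$, a single segment has only a $p$-th moment, so no exponential Chernoff bound is available and the naive Chebyshev estimate is far too weak. The device that rescues this — and the technical crux I expect to spend the most care on — is exploiting the freedom in $r$: enlarging the cylinder radius multiplies the number of edge-disjoint parallel detours between consecutive axis points (Lemma~\ref{lma:2dpaths}), and taking the minimum over $\asymp r^{d-1}$ independent such detours turns a $\Pr(Y>x)$-type tail into a $\Pr(Y>x)^{c r^{d-1}}$-type tail, which for $r\ge R_1(d,p,q)$ has all moments up to order exceeding any prescribed $q$. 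Making this ``radius buys moments'' trade-off quantitative, and checking that the constants can be chosen in the stated order ($R_1$ depending only on $d,p,q$, and only then $M_1$ depending also on $\eps$ and $z$), is the part that requires genuine bookkeeping rather than routine estimation. This is precisely the argument carried out in~\cite{A13}, and here we simply invoke it.
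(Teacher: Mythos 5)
Your reduction to a sum of i.i.d.\ \emph{point-to-point} segment crossings $T_{S_i}(w_{i-1},w_i)$ through portal points on the axis cannot deliver the stated bound, and the device you propose to rescue it does not apply there. Every path from $w_{i-1}$ to $w_i$ must leave $w_{i-1}$ through one of its at most $2d$ adjacent edges, so $\Pr\big(T_{S_i}(w_{i-1},w_i)>t\big)\ge\Pr(Y>t)$ no matter how large $r$ is: Lemma~\ref{lma:2dpaths} only ever supplies $2d$ edge-disjoint detours, and enlarging the radius does not increase that number for point-to-point travel times. Your i.i.d.\ summands therefore have tails exactly as heavy as $Y$, and the large-jump term in the Fuk--Nagaev decomposition is of order $m\,\Pr(Y>cx)$ with $m$ as large as $n\le x$; when $\E[Y^p]<\infty$ only for some $p<1$ this need not even tend to zero, let alone be $O(x^{-q})$ for $q\ge1$. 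This is not a bookkeeping issue one can fix: the inequality $T_{\Cyl(z,r)}(\Hb_0,\Hb_{n\|z\|})\le\sum_i T_{S_i}(w_{i-1},w_i)$ is itself too lossy, because the right-hand side genuinely has a tail of order $m\,\Pr(Y>\cdot)$ (it dominates the largest of $m$ independent copies of $Y$, one per portal point), whereas the left-hand side is a face-to-face quantity with much lighter tails. The clean $M_1/x^q$ bound with no $\Pr(Y>x/M)$ term is special to hyperplane-to-hyperplane travel times; every point-to-point estimate in the paper (Proposition~\ref{prop:tube}, Theorem~\ref{thm:LDEcone}) carries the extra $M\,\Pr(Y>x/M)$ term for exactly this reason.

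The ``radius buys moments'' phenomenon you invoke is real, but it lives at the level of slab crossings: $T_{\Cyl(z,r)}(\Hb_0,\Hb_{\|z\|})$ is at most the minimum over $\asymp r^{d-1}$ edge-disjoint crossings of the slab, which is why $r\ge R_1(d,p,q)$ yields finite variance (and arbitrarily high moments). This forces the block decomposition to be hyperplane-to-hyperplane, and then consecutive slab-crossing times are neither independent nor do they sum to the total. The proof in~\cite{A13}, as outlined in the paper, resolves this with a regenerative structure instead: one declares a renewal at those multiples of $m\|z\|$ at which every edge crossing the corresponding hyperplane inside the cylinder has weight at most $t$; the increments between successive renewal hyperplanes are i.i.d.\ with finite variance, the patching error is controlled by $t$ times the number of crossings, Chebyshev plus Wald's lemma for the stopped sum give polynomial decay, and the exponent is boosted to $q$ by taking minima over several disjoint parallel cylinders of radius $r$. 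A secondary gap in your sketch is that you need the mean of a segment-restricted point-to-point crossing to be close to $\mu(z)$ per unit length, while Proposition~\ref{prop:muKconv} only controls the time constant of the full infinite cylinder; the regenerative construction is also what closes that gap.
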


Since Lemma~\ref{lma:tube} is proved in~\cite{A13}, we only recall the main steps here.

\begin{proof}[Outline of proof]
The key to the proof lies in identifying a suitable regenerative sequence. First fix $\eps>0$, and choose $r$ large enough for $T_{\Cyl(z,r)}(\Hb_0,\Hb_{\|z\|})$ to have finite variance and for $\mu_{\Cyl(z,r)}$ to be close to $\muZ(z)$ (see Proposition~\ref{prop:muKconv} and the following remark). Next choose $m$ large for $\frac{1}{m}\E\big[T_{\Cyl(z,r)}(\Hb_0,\Hb_{m\|z\|})\big]$ to be close to $\mu_{\Cyl(z,r)}$.

An easy way to identify a sequence which is almost regenerative is to, for some $t>0$, let $A_k$ denote the event that all edges connecting $\Hb_{km\|z\|-1}$ to $\Hb_{km\|z\|}$ and contained in $\Cyl(z,r)$ have weight at most $t$. Let $\rho_j$ denote the $j$th $k\ge1$ for which $A_k$ occurs. Then the sequence $\big(T_{\Cyl(z,r)}(\Hb_0,\Hb_{\rho_jm\|z\|})\big)_{j\ge1}$ is not regenerative, but $T_{\Cyl(z,r)}(\Hb_0,\Hb_{\rho_jm\|z\|})$ is well approximated by the sum of $T_{\Cyl(z,r)}(\Hb_{\rho_{i-1}m\|z\|},\Hb_{\rho_im\|z\|})$ for $i=1,2,\ldots,j$ (where $\rho_0=0$). Note further that these terms are i.i.d.\ by construction, and have finite variance.

For the next step, let $\nu(n):=\min\{j\ge1:\rho_jm>n\}$ and note that $\nu(n)$ is a stopping time. The stopped sum $\sum_{i=1}^{\nu(n)}T_{\Cyl(z,r)}(\Hb_{\rho_{i-1}m\|z\|},\Hb_{\rho_im\|z\|})$ approximates $T_{\Cyl(z,r)}(\Hb_0,\Hb_{n\|z\|})$, and one may show that its mean is close $n\muZ(z)$. Together with an estimate on the error of the approximation it is possible to apply Chebychev's inequality and Wald's lemma to obtain polynomial decay of the probability $\Pr\big(T_{\Cyl(z,r)}(\Hb_0,\Hb_{n\|z\|})-n\muZ(z)>\eps x\|z\|\big)$. Once polynomial decay is achieved, it can be strengthened to a higher power by considering disjoint cylinders aligned next to each other, all of radius $r$.

The missing details that justify all steps above are fully presented in~\cite[see Proposition~18 and its proof]{A13}.
\end{proof}

To obtain an upper bound on travel times on a cone in terms of travel times on a cylinder, it will be necessary to make sure the cylinder is contained in the cone. We will therefore need a refined version of the above lemma. Let $\Dc(x,y,r)$ denote the subgraph of the lattice induced by $\bigcup_{a\in[0,1]}B(x+a(y-x),r)$, for some $x,y\in\R^d$ and $r>4\sqrt{d}$. That is, $\Dc(x,y,r)$ is a cylinder with round ends at its `endpoints' $x$ and $y$.

\begin{prop}\label{prop:tube}
Assume that $\E[Y^p]<\infty$ for some $p>0$. For every $\eps>0$, $q\ge1$ and $z\in\Z^d$, there exist $R_2=R_2(d,p,q)$ and $M_2=M_2(\eps,d,p,q,z)$ such that for every $a,b\in\R$, $r\ge R_2$, $u,v\in\Dc(az,bz,\sqrt{d})$ and $x\ge\|v-u\|$
$$
\Pr\big(T_{\Dc(az,bz,r)}(u,v)-\|v-u\|\muZ(z/\|z\|)>\eps x\big)\,\le\, M_2\,\Pr(Y>x/M_2)+\frac{M_2}{x^q}.
$$
\end{prop}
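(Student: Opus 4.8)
The plan is to reduce the statement about the round-ended cylinder $\Dc(az,bz,r)$ to the infinite-cylinder estimate of Lemma~\ref{lma:tube}. First I would dispose of the case when $\|v-u\|$ is small (say, bounded by a constant $C=C(z)$ depending only on $z$): there, Proposition~\ref{prop:Ycond} applied to a thin cylinder of radius $4\sqrt d\le r$ containing both $u$ and $v$ gives $\Pr(T_{\Dc}(u,v)>9\|v-u\|x)\le 9^{2d}\|v-u\|\,\Pr(Y>x)$, and since $\|v-u\|\le C$ one absorbs the polynomial factor into the constant $M_2$ and rescales $x$, which yields a bound of the required form (the $M_2/x^q$ term is not even needed here). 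So from now on assume $\|v-u\|$ exceeds this threshold.

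For the bulk case, the key geometric observation is that any two points $u,v$ within distance $\sqrt d$ of the line segment $\{az,\dots,bz\}$ can be joined, inside $\Dc(az,bz,r)$ once $r\ge R_2$ is large enough, by a short detour onto the axis followed by a straight run parallel to $z$ and then another short detour. Concretely: pick lattice points $u'$ and $v'$ on (or within $\sqrt d$ of) the axis, close to the orthogonal projections of $u$ and $v$, such that $v'-u'$ is (approximately) an integer multiple $n$ of $z$; then $T_\Dc(u,v)\le T_\Dc(u,u')+T_{\Cyl'}(u',v')+T_\Dc(v',v)$, where $\Cyl'$ is a translate of the straight cylinder $\Cyl(z,r)$ sitting inside $\Dc(az,bz,r)$. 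Here one must choose $r$ large enough (uniformly in $a,b$, which only shift and do not distort the picture) that this straight sub-cylinder genuinely fits inside the round-ended one away from the two caps; since the caps have bounded overhang $\sqrt d$, taking $R_2=R_1\vee(4\sqrt d+C')$ for a suitable absolute $C'$ suffices. The middle term is then controlled by Lemma~\ref{lma:tube}: $\Pr\big(T_{\Cyl'}(u',v')-n\muZ(z)>\eps x\|z\|/2\big)\le M_1/x^q$ (after matching $n\|z\|$ against $\|v'-u'\|$ and noting $n\muZ(z)=\|v'-u'\|\muZ(z/\|z\|)$ up to the bounded discrepancy from rounding, which is linear in nothing and thus harmlessly absorbed). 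The two short end terms $T_\Dc(u,u')$ and $T_\Dc(v',v)$ each involve a bounded number of edges in a thin cylinder of radius $4\sqrt d$, so Proposition~\ref{prop:Ycond} bounds each of them by $M\,\Pr(Y>x/M)$, and a union bound over the three pieces combines everything into $M_2\,\Pr(Y>x/M_2)+M_2/x^q$.

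The main obstacle I anticipate is the bookkeeping of the rounding errors: $\|v-u\|\muZ(z/\|z\|)$ versus $n\muZ(z)$ differ because $u,v$ need not project to exact multiples of $z$ along the axis, and because $u',v'$ are lattice points approximating those projections. One has to check that the accumulated discrepancy in the ``center'' of the deviation is $O(1)$ (bounded in terms of $\|z\|$ only, via the Lipschitz bound $|\muZ(y)-\muZ(x)|\le\muZ(\ebf_1)\|y-x\|$), so that it can be swallowed by choosing $x$ large enough or inflating $M_2$ — exactly the reason the statement is phrased with the generous constants $M_2$ on both the argument of $Y$ and the prefactor. A secondary, purely geometric point to verify carefully is that the required radius $R_2$ can indeed be taken independent of $a$ and $b$; this is clear because translating and truncating the infinite cylinder $\Cyl(z,r)$ does not change the local connectivity guaranteed by Lemma~\ref{lma:2dpaths}, but it deserves an explicit sentence. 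Everything else is a routine union bound plus the already-established estimates.
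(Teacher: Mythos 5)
Your overall strategy --- reduce to the straight-cylinder estimate of Lemma~\ref{lma:tube}, patch the two ends with Proposition~\ref{prop:Ycond}, and absorb the $O(1)$ rounding discrepancies in $\muZ$ into $M_2$ for large $x$ --- is exactly the route the paper takes, and your treatment of the small-$\|v-u\|$ case and of the uniformity in $a,b$ is fine. But there is one genuine gap in the middle step. Lemma~\ref{lma:tube} controls the travel time \emph{between the hyperplanes} $\Hb_0$ and $\Hb_{n\|z\|}$ inside the cylinder, i.e.\ an infimum over all pairs of endpoints in those hyperplane slices. This is a \emph{lower} bound for the point-to-point quantity $T_{\Cyl'}(u',v')$ that you want to bound from above; the optimal hyperplane-to-hyperplane path need not start at $u'$ nor end at $v'$, so the inequality you need goes the wrong way and the lemma does not apply to $T_{\Cyl'}(u',v')$ as written. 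Your ``short detour onto the axis'' handles the transverse offset of $u$ and $v$ from the segment, but not this point-versus-hyperplane mismatch, which is a separate issue.

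The fix is of the same flavour as your end detours, and it is what the paper does: bound
$$
T_{\Dc(az,bz,r)}(u,v)\;\le\;T_{\Cyl(z,R_1)}\big(\Hb_{\|u\|},\Hb_{\|v\|}\big)+\sum_{y\in V_{\|u\|}}T_{\Dc(az,bz,r)}(u,y)+\sum_{y\in V_{\|v\|}}T_{\Dc(az,bz,r)}(y,v),
$$
where $V_n=\Hb_n\cap\Cyl(z,R_1)$; whichever endpoints the optimal hyperplane-crossing path uses, they are covered by the two sums, each of which has a bounded (depending on $d$ and $R_1$ only) number of terms, each controlled by Proposition~\ref{prop:Ycond} and hence contributing $M\,\Pr(Y>x/M)$. (The paper also records, as its display~\eqref{eq:firstbound}, the translation- and rounding-robust version of Lemma~\ref{lma:tube} for $\Hb_m$ to $\Hb_{m+n}$ with $n$ not a multiple of $\|z\|$, which is the precise form of the ``bookkeeping'' you flag.) With that correction your argument closes, and the choice of $R_2$ so that the relevant portion of $\Cyl(z,R_1)$ sits inside $\Dc(az,bz,r)$ is as you describe.
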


\begin{proof}
Due to symmetry we may assume that $z$ belongs to the first orthant. Let $R_1=R_1(d,p,q)$  and $M_1=M_1(\eps,d,p,q,z)$ be given as in Lemma~\ref{lma:tube}. We will first show that
for every $r\ge R_1$, $m,n\in\N$ and large enough $x\ge n$
\be\label{eq:firstbound}
\Pr\big(T_{\Cyl(z,r)}(\Hb_m,\Hb_{m+n})-n\muZ(z/\|z\|)>\eps x\big)\,\le\,M_1\left(\frac{2\|z\|}{x}\right)^q.
\ee
Let $k_n$ denote the largest integer such that $k_n\|z\|\le n$. An upper bound on the left-hand side in~\eqref{eq:firstbound} is given by
$$
\Pr\Big(T_{\Cyl(z,r)}(\Hb_{k_m\|z\|},\Hb_{(k_{m+n}+1)\|z\|})-(k_{m+n}-k_m-1)\muZ(z)>\eps x\Big).
$$
According to Lemma~\ref{lma:tube}, this expression is bounded above by $M_1(2\|z\|/x)^q+\Pr(2\muZ(z)>\eps x/2)$ for all $x\ge(k_{m+n}-k_m+1)\|z\|$, and thus in particular for $x\ge n+2\|z\|$. Since $\eps>0$ was arbitrary, and since $\muZ(z)\le\eps x/4$ when $x$ is large, we obtain~\eqref{eq:firstbound}.

We continue with the main statement. Assume that $\|u\|\le\|v\|$, and pick $R_2'=R_2'(d,p,q)$ such that $\bigcup_{n=\|u\|,\ldots,\|v\|}\Hb_n\cap\Cyl(z,R_1)$ is contained in $\Dc(az,bz,R_2')$. (A specific value of $R_2'$ is not important, but one can show that $R_2'\ge\sqrt{d}R_1+\sqrt{d}$ is sufficient.) For ease of notation let $V_n=\Hb_n\cap\Cyl(z,R_1)$, and note that for $r\ge R_2=R_2'+4\sqrt{d}$
$$
T_{\Dc(az,bz,r)}(u,v)\,\le\,T_{\Cyl(z,R_1)}(\Hb_{\|u\|},\Hb_{\|v\|})+\sum_{y\in V_{\|u\|}}T_{\Dc(az,bz,r)}(u,y)+\sum_{y\in V_{\|v\|}}T_{\Dc(az,bz,r)}(y,v).
$$
Note further that $\big|\|v\|-\|u\|\big|\le\|v-u\|$, so by~\eqref{eq:firstbound} we obtain for large enough $x\ge\|v-u\|$
$$
\Pr\Big(T_{\Cyl(z,R)}(\Hb_{\|u\|},\Hb_{\|v\|})-\|v-u\|\muZ(z/\|z\|)>\eps x\Big)\,\le\,M_1\left(\frac{2\|z\|}{x}\right)^q.
$$
Since $\eps>0$ was arbitrary, the statement is now easily derived from Proposition~\ref{prop:Ycond}.
\end{proof}

A second difficulty in the extension of the large deviations estimate from lattice to cone comes with the next proposition. A version of the result was crucial also to derive the lattice estimate. The more restrictive geometry of the cone complicates the situation somewhat.

\begin{prop}\label{prop:cone}
Assume that $\E[Y^p]<\infty$ for some $p>0$. For every unit vector $u\in\R^d$, $c>0$ and $q\ge1$ there is $M_3=M_3(d,p,q,u)$ such that for every $y,z\in\iVH$ and $x\ge\|z-y\|$
$$
\Pr\big(T_{\Hc(u,c)}(y,z)>M_3x\big)\,\le\, M_3\,\Pr(Y>x)+\frac{1}{x^q}.
$$
\end{prop}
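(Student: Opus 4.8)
The plan is to control $T_{\Hc(u,c)}(y,z)$ by concatenating travel times along a sequence of short, fat cylinders (round-ended cylinders $\Dc(\cdot,\cdot,r)$) that follow a path from $y$ to $z$ while remaining inside the cone $\Hc(u,c)$. Recall that $y,z\in\iVH$ means they lie inside the open cone $\Bc(u,c)=\bigcup_{a\ge0}B(au,ca)$, so they are at $\ell^1$-distance strictly bounded away from the boundary of $\Hc$ (which carries the extra $4\sqrt d$ of room). First I would fix a reference direction: choose a lattice point $w$ whose direction $w/\|w\|$ approximates $u$ closely enough that the cylinder $\Cyl(w,r)$, and hence the round-ended pieces $\Dc(aw,bw,r)$ built along this direction, can be fit inside the cone whenever they are anchored at an interior point. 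The point of approximating $u$ by a rational direction is that Proposition~\ref{prop:tube} is stated for $\Dc(az,bz,r)$ with $z\in\Z^d$, and it gives deviations above $\|v-u\|\muZ(z/\|z\|)$, so we also need $\muZ(w/\|w\|)$ to be close to $\muZ(u)$, which follows from Lipschitz continuity of $\muZ$ by taking $w$ close enough.

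Next I would build the route from $y$ to $z$. The idea is to go from $y$ roughly radially inward/outward and angularly so as to reach a point $y'$ on (or very near) the ray $\{aw:a\ge0\}$, then travel along that ray inside a fat round-ended cylinder to a point $z'$ near $z$, then come back out to $z$; alternatively, if $y$ and $z$ are already aligned well with the ray, one fat cylinder $\Dc(y,z,r)$ entirely inside the cone suffices. Since $y,z$ are interior, the cone locally contains a ball of radius comparable to the distance from the apex, so for the short `detour' segments (from $y$ to $y'$ and from $z'$ to $z$, each of length $O(\|z-y\|)$ up to a constant depending only on $c$ and $u$) one may again use a fat round-ended cylinder contained in $\Hc$; in the worst case, when $y$ or $z$ is very close to the apex, one uses instead the crude bound of Proposition~\ref{prop:Ycond} on a thin tube, which costs only a factor linear in $\|z-y\|$ times $\Pr(Y>x)$. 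Each of the $O(1)$ cylinder pieces contributes, via Proposition~\ref{prop:tube}, a term of the form $M_2\Pr(Y>x/M_2)+M_2/x^q$ to the tail of its travel time being larger than a constant multiple of $x$; the central long segment along direction $w$ contributes $\|v-u\|\muZ(w/\|w\|)\le\|v-u\|(\muZ(u)+1)\le(\muZ(u)+1)\cdot(1+o(1))\|z-y\|\le \mathrm{const}\cdot x$. Summing $O(1)$ such tails and absorbing constants into a single $M_3$ gives the claimed bound $M_3\Pr(Y>x)+x^{-q}$ (after rescaling $q$ and $M_3$, which only changes constants).

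The technical heart is the \textbf{geometric step}: verifying that one can always route between two interior points of the cone using $O(1)$ fat round-ended cylinders, each of radius $\ge R_2$, all contained in $\Hc(u,c)$, with the total Euclidean length $O(\|z-y\|)$ and with the long central segment aligned within a controlled angle of $u$. The subtlety is uniformity: the implied constants must depend only on $d$, $c$, $u$ (and $p,q$ through $R_2$), not on how close $y,z$ are to the apex or to the cone's lateral boundary. I would handle this by scaling: after dividing by $\max(\|y\|,\|z\|)\vee\|z-y\|$ the configuration lives in a fixed compact region, and `being interior' becomes a statement about lying in the fixed open cone $\Bc(u,c)$; a compactness argument then produces a uniform radius and a uniform bound on the number of cylinder segments, provided the points are not too near the apex, while the near-apex case (where scaling fails) is precisely where the short tube and Proposition~\ref{prop:Ycond} take over, since there $\|z-y\|$ is comparable to the apex distance and a direct $\ell^1$ path of length $\|z-y\|$ through thin tubes inside $\Hc$ is cheap. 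The other steps — invoking Proposition~\ref{prop:tube} on each piece, using subadditivity $T_\Hc(y,z)\le\sum T_\Hc(v_{i-1},v_i)$, and the union bound over $O(1)$ events — are routine once the route is fixed.
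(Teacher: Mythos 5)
Your toolbox is the right one (Propositions~\ref{prop:Ycond} and~\ref{prop:tube}, subadditivity over a bounded number of round-ended cylinders, a union bound), but the routing you propose has a genuine geometric flaw. You send $y$ to a point $y'$ on the central ray $\{aw:a\ge0\}$ and claim this detour has length $O(\|z-y\|)$. That is false: if $y$ and $z$ are interior points lying close to the lateral boundary of the cone, far from the apex, and close to each other (say $\|z-y\|=O(1)$ while $\|y\|=n$), then the distance from $y$ to the axis is of order $cn$. The expected travel time along that detour is then of order $n$, so no constant $M_3$ can give $\Pr(T_\Hc(y,z)>M_3x)$ small for $x=\|z-y\|$. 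The hedge ``if $y$ and $z$ are aligned with the ray, one cylinder suffices'' does not cover this case, since $z-y$ may be transverse to $u$ and a radius-$R_2$ cylinder around the segment $yz$ need not fit in $\Hc$. The paper's fix is a different decomposition: take an orthonormal frame $u^{(1)}=u,u^{(2)},\dots,u^{(d)}$ (with $u$ first approximated by a rational direction), move from $y$ parallel to $u$ until hitting the hyperplane through $z$ normal to $u$, and then traverse $d-1$ segments parallel to $u^{(2)},\dots,u^{(d)}$ inside the $(d-1)$-dimensional ball that is the cone's cross-section, \emph{reordering} these segments so every intermediate point stays at distance at least $R_2$ from the boundary. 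Orthogonality makes the total route length comparable to $\|z-y\|$, which is exactly what your axis detour destroys.

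Two further points. First, your rescaling/compactness argument for uniformity cannot work as stated: the cylinder radius $R_2$ is an absolute length scale, and after rescaling by $\|y\|\vee\|z-y\|$ the interior points within distance $O(1)$ of the lateral boundary collapse onto the boundary, so no uniform rescaled radius exists. The paper instead proves the bound only for points at distance $\ge R_2$ from the boundary and then connects the remaining interior points to such deep points by short thin tubes via Proposition~\ref{prop:Ycond} (note this proposition is only useful when the $\ell^1$-length of the connection is bounded, since its right-hand side carries a factor $\|z-y\|$ in front of $\Pr(Y>x)$ and a factor $\|z-y\|$ inside the event). Second, for irrational $u$ the issue is not only that $\muZ(w/\|w\|)$ approximates $\muZ(u)$ (Lipschitz continuity handles that), but that the cross-section becomes an ellipsoid in which two points may not be joinable by $d-1$ segments parallel to the chosen frame without exiting; the paper resolves this with up to $d+1$ segments routed through a large inscribed sphere.
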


\begin{proof}
There are three cases, depending on whether $c>1$, $c=1$ or $c<1$. In the first case $\Hc$ equals the lattice, and the statement is essentially derived by obtaining an upper bound on the travel time between $y$ and $z$ as follows: Let $v^{(0)}=y$ and $v^{(d)}=z$, and define $d-1$ intermediate points as the turning points of the path between $y$ and $z$ that successively takes $z_i-y_i$ steps in direction $\ebf_i$, for $i=1,2,\ldots,d$. Denote the intermediate points by $v^{(1)},v^{(2)},\ldots,v^{(d-1)}$. An upper bound on the travel time between $y$ and $z$ is given by the sum of $T(v^{(i-1)},v^{(i)})$ for $i=1,2,\ldots,d$, and the result now follows via Proposition~\ref{prop:tube}.

In the second case $\Hc$ equals the restriction of the lattice to a half-space, and the above approach works also here, although one has to make sure that the intermediate points are chosen within the half-space. In the final case, we may argue similarly, although it will not suffice to take steps in coordinate directions. We will instead have to choose the $d$ base vectors more carefully.

Assume thus that $c<1$, and further that $u$ is of the form $z/|z|$ for some $z\in\Z^d$. We will at the end indicate how the proof may be modified to general directions. Let $H_u$ denote the hyperplane through the origin to which $u$ is normal, and let $H_u(z)$ denote its transposition to a point $z$. Note that the intersection of $H_u(z)$ with $\Hc=\Hc(u,c)$ is a $(d-1)$-dimensional Euclidean ball. Let $u^{(1)}=u$, and pick $d-1$ orthonormal vectors $u^{(2)},u^{(3)},\ldots,u^{(d)}$ spanning $H_u$, and also them `rational', i.e.\ of the form $z/|z|$ for some $z\in\Z^d$.

Pick $R_2=R_2(d,p,q)$ as in Proposition~\ref{prop:tube}, and assume first that $y$ and $z$ are at distance at least $R$ from the boundary of the cone, i.e.\ that $B(y,R_2)$ and $B(z,R_2)$ are both contained in $\VH$. We will next identify $d-1$ intermediate points, and bound the travel time between $y$ and $z$ by the sum of travel times between these intermediate points. We may assume that $y$ lies no further than $z$ from the origin. Set $v^{(0)}=y$, and let $v^{(1)}$ denote the intersection of $H_u(z)$ and the straight line through $y$ in direction $u$. (Note that $v^{(1)}$ does not necessarily have integer coordinates.) Next, move from $v^{(1)}$ to $z$ along $d-1$ straight line segments parallel to the vectors $u^{(2)},\ldots,u^{(d)}$. Since $y$ lies at distance at least $R_2$ from the boundary of $\Hc$, also $v^{(1)}$ will lie at distance at least $R_2$ from the boundary. Reordering the sequence in which the line segments are crossed, we can make sure that the endpoints of the line segments are all contained in the intersection between $H_n(z)$ and $\Hc$, and that none is closer than $R_2$ from the boundary of the cone. Denote these intermediate points connecting two line segments by $v^{(2)},\ldots,v^{(d-1)}$. Finally, let $v^{(d)}=z$.

Since the $d$ unit vectors are orthogonal to each other, the length of the straight line segments joining $v^{(0)},v^{(1)},\ldots,v^{(d)}$ together equals $\|z-y\|$. For $i=0,1,\ldots,d$, let $\hat{v}^{(i)}$ denote the lattice point closest to $v^{(i)}$. According to Proposition~\ref{prop:tube} the travel time between $\hat{v}^{(i-1)}$ and $\hat{v}^{(i)}$, restricted to $\Dc(v^{(i-1)},v^{(i)},R_2)$, satisfies for each $i=1,2,\ldots,d$
\be\label{eq:boundstep}
\Pr\Big(T_{\Dc(v^{(i-1)},v^{(i)},R_2)}(\hat{v}^{(i-1)},\hat{v}^{(i)})>(\muZ(u^{(i)})+1)x\Big)\,\le\, M_2^{(i)}\,\Pr\big(Y>x/M_2^{(i)}\big)+\frac{M_2^{(i)}}{x^q},
\ee
for some $M_2^{(i)}=M_2^{(i)}(d,p,q,u^{(i)})$ and $x\ge\|\hat{v}^{(i)}-\hat{v}^{(i-1)}\|$. Note further that since each of the points $v^{(0)},v^{(1)},\ldots,v^{(d)}$ is at distance at least $R_2$ from the boundary of $\Hc$, it follows that $\Dc(v^{(i-1)},v^{(i)},R_2)$ is contained in $\Hc=\Hc(u,c)$. Consequently,
$$
T_\Hc(y,z)\;\le\; \sum_{i=1}^dT_\Hc(\hat{v}^{(i-1)},\hat{v}^{(i)})\;\le\;\sum_{i=1}^d T_{\Dc(v^{(i-1)}v^{(i)},R_2)}(\hat{v}^{(i-1)},\hat{v}^{(i)}).
$$
Together with~\eqref{eq:boundstep} this implies that for `rational' $u$, and for $y$ and $z$ at distance at least $R_2$ from the boundary of $\Hc$ we have proven that for $x\ge\|y-z\|$
$$
\Pr\big(T_\Hc(y,z)>M_3'x\big)\,\le\, M_3'\,\Pr(Y>x/M_3')+\frac{M_3'}{x^q},
$$
for any $M_3'$ greater than both $M_2^{(1)}+\ldots+M_2^{(d)}$  and $\muZ(u^{(1)})+\ldots+\muZ(u^{(d)})+d$. The statement is now easy to extend to any pair of interior point via Proposition~\ref{prop:Ycond}.

It remains to argue how the proof may be modified in the case that $u$ is not `rational'. In this case, we may chose a unit vector $u'$ of the form $z/|z|$ for some $z\in\Z^d$ which is an arbitrarily close approximation of $u$, and a set of $d-1$ orthonormal vectors spanning $H_{u'}$. The intersection of $H_{u'}$ and the cone is in this case a solid $(d-1)$-dimensional ellipsoid, and once $u'$ is close to $u$, the ellipsoid is close to a sphere. It is possible to choose the $d$ vectors to arbitrarily well approximate $u$ and the $d-1$ principal directions of the ellipsoid obtained as the intersection between $H_{u'}$ and the cone. Although the approximations are well chosen, there may still be pairs of points within the ellipsoid which may not be connected by $d-1$ straight line segments (parallel with the $d-1$ spanning vectors) without exiting the ellipsoid. However this can certainly be achieved by $d+1$ line segments as follows: From each point in the solid ellipsoid it is possible to reach the interior 
of a large sphere contained in the ellipsoid following a straight line segment parallel to one of the $d-1$ chosen unit directions. Any two points within the sphere may as before be connected in $d-1$ steps, and this ends the modifications necessary in the case that $u$ is arbitrary.
\end{proof}

The proof of the main result of this section may now be given.

\begin{proof}[{\bf Proof of Theorem~\ref{thm:LDEcone}}]
Fix $\eps\in(0,1/2)$ and $q\ge1$. Recall that travel times on $\Hc=\Hc(u,c)$ are bounded below by travel times of the lattice, thus the inequality
$$
\Pr\big(|T_\Hc(0,z)-\muZ(z)|>\eps x\big)\,\le\,\Pr\big(T(0,z)-\muZ(z)<-\eps x/2\big)+\Pr\big(T_\Hc(0,z)-\muZ(z)>\eps x/2\big).
$$
Recall further that the correct order of decay of $\Pr\big(T(0,z)-\mu(z)<-\eps x\big)$ was obtained in~\cite{A13}. It will therefore suffice to show that there exists $M=M(\eps,d,p,q,u)$ such that for every $z\in\iVH$ and $x\ge\|z\|$
\be\label{eq:LDEabove}
\Pr\big(T_\Hc(0,z)-\muZ(z)>\eps x\big)\,\le\, M\,\Pr(Y>x/M)+\frac{M}{x^q}.
\ee
 
Let $R_2=R_2(d,p,q)$ be as in Proposition~\ref{prop:tube}. Let $U_n$ denote the set of points $y\in\Z^d$ such that $\|y\|=n$ and $B(y,R_2)\subset\VH$. Choose $N=N(\eps,R_2)$ such that all but at most finitely many points in $\VH$ are contained in
$$
\bigcup_{y\in U_N}\bigcup_{a\ge0}B(ay,\eps a|y|).
$$
That these finitely many points satisfies~\eqref{eq:LDEabove} for some $M$ is immediate from Proposition~\ref{prop:Ycond}. For any other other point $z\in\VH$, note that for some $y\in V_N$ (the one closest to $z$)
$$
\big\|y/\|y\|-z/\|z\|\big\|\;\le\;\big|y/|y|-z/|z|\big|\;\le\;2\eps.
$$

We proceed to obtain an upper bound on $T_\Hc(0,z)-\muZ(z)$. Write $\Dc$ for $\Dc(y,\|z\|y/\|y\|,R_2)$, and $y_z$ for the lattice point closest to $z$ among those at distance at most $\sqrt{d}$ from $\|z\|y/\|y\|$ and with $\|y_z\|\le\|z\|$. Note that also $\|y_z-y\|\le\|z\|$ since $\|y\|$ is assumed to be large. The usual subadditive property and coupling gives us
 \bea
 \begin{aligned}
 T_\Hc(0,z)-\muZ(z)\;&\le\;T_\Hc(0,y)+\big[T_\Dc(y,y_z)-\|y_z-y\|\muZ(y/\|y\|)\big]\\
 &\quad\,+\|z\|\big[\muZ(y/\|y\|)-\muZ(z/\|z\|)\big]+T_\Hc(y_z,z).
 \end{aligned}
 \eea
The four terms in the right-hand side, to be referred to as $X_1,X_2,X_3,X_4$, will be considered in order. Let $M_2=M_2(\eps,d,p,q,y)$ be as in Proposition~\ref{prop:tube}, let $M_3=M_3(d,p,q,u)$ be as in Proposition~\ref{prop:cone}, and let $x\ge\|z\|$. As before, Proposition~\ref{prop:Ycond} shows that $\Pr(X_1>\eps x)$ is bounded above by $9^{2d}\,\|y\|\,\Pr(Y>\eps x)$. Secondly, for $\Pr(X_2>\eps x)$ Proposition~\ref{prop:tube} gives the upper bound $M_2\,\Pr(Y>x/M_2)+M_2/x^q$. Due to Lipschitz continuity of the time constant, the third term is at most $\|z\|\muZ(\ebf_1)\big\|y/\|y\|-z/\|z\|\big\|$ which again is at most $2\muZ(\ebf_1)\eps\|z\|$. As a consequence $\Pr(X_3>2\muZ(\ebf_1)\eps x)=0$. Finally, note that $y_z$ is closer to $z$ than $\|z\|y/\|y\|$ is, so $\|y_z-z\|\le2\eps\|z\|$. Via Proposition~\ref{prop:cone} we thus obtain the upper bound $M_3\,\Pr(Y>x)+x^{-q}$ on $\Pr(X_4>2M_3\eps x)$.

In conclusion, for all large enough $z$ and $x\ge\|z\|$
 $$
 \Pr\Big(T_\Hc(0,z)-\muZ(z)>(1+1+2\muZ(\ebf_1)+2M_3)\eps x\Big)\,\le\, M\,\Pr(Y>x/M)+\frac{M}{x^q},
 $$
 where $M=9^{2d}\,\|y\|+1/\eps+M_2+M_3$ suffices. Since $\eps>0$ was arbitrary, this completes the proof of~\eqref{eq:LDEabove}, and hence of the theorem.
\end{proof}


\section{Convergence towards an asymptotic shape}\label{sec:mainproofs}

Recall that for any non-negative random variable $X$ and $p>0$, finiteness of $\E[X^p]$ is equivalent to convergence of the series $\sum_{n=1}^\infty n^{p-1}\,\Pr(X>n)$. More precisely, moments of $X$ can be expressed as
$$
\E[X^p]\,=\,p\int_0^\infty x^{p-1}\,\Pr(X>x)\,dx.
$$
With this in mind, we will easily be able to derive Theorem~\ref{thm:HREcone} and~\ref{thm:Lp-ST} from Theorem~\ref{thm:LDEcone}.

Although Theorem~\ref{thm:HREcone} and~\ref{thm:Lp-ST} were stated in terms of Euclidean distance, we will in the proofs below work with $\ell^1$-distances. Due to the equivalence between $\ell^p$-norms, this implies no restriction to the validity of arguments. However, it does facilitate summation over lattice points.

\begin{proof}[{\bf Proof of Theorem~\ref{thm:HREcone}}]
We first note that $\E[Y^p]<\infty$ is necessary. Let $\Hc=\Hc(u,c)$ and recall that a lower bound on $T_\Hc(0,z)-\muZ(z)$ is given by $Y-\mu(z)\ge Y-\muZ(\ebf_1)\|z\|$. Note further that the number of points in $\iVH$ at $\ell^1$-distance $n$ from the origin is of order $n^{d-1}$, meaning that it is bounded below by $\delta n^{d-1}$ and above by $\frac{1}{\delta}n^{d-1}$ for some $\delta>0$. Consequently,
$$
\sum_{z\in\iVH}\|z\|^{p-d}\,\Pr\big(T_\Hc(0,z)-\muZ(z)>\eps\|z\|\big)\;\ge\;\delta\sum_{n=1}^\infty n^{p-1}\,\Pr\big(Y>(\muZ(\ebf_1)+\eps)n\big),
$$
which is finite only if $\E[Y^p]<\infty$.

We next consider the sufficiency of $\E[Y^p]<\infty$. Let $q=p+1$ and $M=M(\eps,d,p,u)$ be given as in Theorem~\ref{thm:LDEcone}, by which we obtain that
\bea
\begin{aligned}
\sum_{z\in\iVH}\|z\|^{p-d}\,\Pr\big(|T_\Hc(0,z)-\muZ(z)|>\eps\|z\|\big)\;&\le\;
M\sum_{z\in\iVH}\Big(\|z\|^{p-d}\,\Pr(Y>\|z\|/M)+\|z\|^{-(d+1)}\Big)\\
&\le\;\frac{M}{\delta}\sum_{n=1}^\infty n^{p-1}\,\Pr(Y>n/M)+\frac{M}{\delta}\sum_{n=1}^\infty n^{-2}.
\end{aligned}
\eea
Here, in the latter step, we have again used the fact that the number of sites in $\iVH$ at distance $n$ from the origin is of order $n^{d-1}$. Sufficiency of $\E[Y^p]<\infty$ is now immediate.
\end{proof}

\begin{proof}[{\bf Proof of Theorem~\ref{thm:Lp-ST}}]
Let $\Hc=\Hc(u,c)$. Since $T_\Hc(0,z)$ is bounded below by $Y$ for $z\neq0$, the expectation of $\big|T_\Hc(0,z)-\muZ(z)\big|^p$ is infinite unless $\E[Y^p]<\infty$. It remains to show that $\E[Y^p]<\infty$ is also sufficient for $L^p$-convergence. For every $\eps>0$
\bea
\begin{aligned}
\E\left|\frac{T_\Hc(0,z)-\muZ(z)}{\|z\|}\right|^p\;&=\;p\int_0^\infty x^{p-1}\,\Pr\big(|T_\Hc(0,z)-\muZ(z)|>x\|z\|\big)\,dx\\
&\le\;p\int_0^\eps x^{p-1}\,dx+p\int_\eps^\infty x^{p-1}\,\Pr\big(|T_\Hc(0,z)-\muZ(z)|>x\|z\|\big)\,dx\\
&=\;\eps^p+p(\eps/\|z\|)^p\int_{\|z\|}^\infty y^{p-1}\,\Pr\big(|T_\Hc(0,z)-\muZ(z)|>\eps y\big)\,dy,
\end{aligned}
\eea
where we in the final step have made the substitution $x\|z\|=\eps y$. Let $q=p+1$ and $M=M(\eps,d,p,u)$ be as in Theorem~\ref{thm:LDEcone}. A further upper bound on the above expression is then
$$
\eps^p+pM(\eps/\|z\|)^p\int_{\|z\|}^\infty\Big(y^{p-1}\,\Pr(Y>y/M)+y^{-2}\Big)\,dy,
$$
which is finite when $\E[Y^p]<\infty$. Thus, sending $\|z\|$ to infinity, we obtain that
$$
\limsup_{z\in\iVH:\,\|z\|\to\infty}\E\left|\frac{T_\Hc(0,z)-\muZ(z)}{\|z\|}\right|^p\,\le\,\eps^p.
$$
Since $\eps>0$ was arbitrary, this ends the proof.
\end{proof}

\begin{remark}
Proposition~\ref{prop:regime} may be derived in a similar manner as Theorem~\ref{thm:HREcone}. In a first step one verifies that an analogous statement to that of Theorem~\ref{thm:LDEcone} also holds for the graph described in Proposition~\ref{prop:regime}. (The set of interior points may in this case be taken to include all sites $(z_1,z_2)$ with $z_1\ge e^{3/a}$ whose four neighbours are all contained in the graph.) Next, one can mimic the proof of sufficiency in Theorem~\ref{thm:HREcone} to obtain the almost sure convergence.
\end{remark}

\section{Dynamical stability and the shape theorem}\label{sec:dfpp}

In this section we consider dynamical first-passage percolation, with the aim at proving that the almost sure convergence towards the asymptotic shape is stable with respect to the introduced dynamics. Travel times in the dynamic environment can be studied via a comparison to non-dynamic environments, as we shall do. Recall that $\{\tau_e(s)\}_{e\in\Ec_{\Z^d}}$ denotes the i.i.d.\ family of weights evolving over time associated with the edges of the $\Z^d$ lattice. Let $N_e=N_e(\delta)$ denote the number or updates of $\tau_e(s)$ during the interval $[0,\delta]$. Clearly $N_e\sim\textup{Poisson}(\delta)$ and $\E[N_e]=\delta$. Define
$$
\bar{\tau}_e:=\tau_e(0)\cdot1_{\{N_e(\delta)=0\}},\quad\text{and}\quad\hat{\tau}_e:=\sup_{s\in[0,\delta]}\tau_e(s).
$$
Given a unit vector $u\in\R^d$ and $c>0$, let $\bar{T}_{\Hc(u,c)}(y,z)$ denote the travel time between $y$ and $z$ on $\Hc(u,c)$ with respect to $\{\bar{\tau}_e\}_{e\in\Ec_{\Z^d}}$, and likewise for $\hat{T}_{\Hc(u,c)}(y,z)$ with respect to $\{\hat{\tau}_e\}_{e\in\Ec_{\Z^d}}$. Clearly, for every $\delta\ge0$ and $y,z\in\Z^d$,
\be\label{eq:dynamicdomination}
\bar{T}_{\Hc(u,c)}(y,z)\;\le\; T_{\Hc(u,c)}^{(s)}(y,z)\;\le\;\hat{T}_{\Hc(u,c)}(y,z),\quad\text{for all }s\in[0,\delta].
\ee
The next lemma shows that they have all finite moments simultaneously.

\begin{lma}\label{lma:dynamicmoment}
For every $\delta>0$, $p>0$ and $q\geq1$
$$
\E\Big[\min\Big(\sup_{s\in[0,\delta]}\tau_1(s),\ldots,\sup_{s\in[0,\delta]}\tau_q(s)\Big)^p\Big]\;\leq\;(1+\delta)^q\E\Big[\min\big(\tau_1(0),\ldots,\tau_q(0)\big)^p\Big],
$$
where $\tau_1(s),\tau_2(s),\ldots,\tau_q(s)$ are i.i.d.\ and distributed as $\tau_e(s)$.
\end{lma}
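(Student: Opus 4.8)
The plan is to bound the expected $p$-th power of the minimum of the suprema by conditioning on how many of the $q$ Poisson clocks ring during $[0,\delta]$. Let $N_i = N_i(\delta)$ denote the number of updates of $\tau_i(s)$ on $[0,\delta]$, so the $N_i$ are i.i.d.\ $\textup{Poisson}(\delta)$. On the event that $N_i = 0$, the edge weight stays constant, so $\sup_{s\in[0,\delta]}\tau_i(s) = \tau_i(0)$. More generally, on the event $N_i = k$, the supremum $\sup_{s\in[0,\delta]}\tau_i(s)$ is the maximum of $k+1$ i.i.d.\ copies of $\tau_e$ (the values $\tau_i^{(0)},\ldots,\tau_i^{(k)}$), which is stochastically dominated by their sum, or — more usefully for keeping the minimum structure — one can simply say that $\min_i \sup_{s}\tau_i(s)$ is dominated by $\min_i \tau_i^{(j_i)}$ for any choice of indices $j_i \in \{0,\ldots,N_i\}$, and in particular by picking $j_i$ to minimize.

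First I would set up the decomposition $\sup_{s\in[0,\delta]}\tau_i(s) = \max_{0\le j\le N_i}\tau_i^{(j)}$ and observe that $\min_{1\le i\le q}\sup_s \tau_i(s) \le \min_{1\le i\le q}\tau_i^{(0)} \cdot \mathbf{1}_E + (\text{something}) \cdot \mathbf{1}_{E^c}$ where $E = \{N_1 = \cdots = N_q = 0\}$; but this is too crude since on $E^c$ the bound blows up. Instead, the cleaner route: condition on the vector $(N_1,\ldots,N_q)$. Given $N_i = k_i$, the variable $\sup_s\tau_i(s)$ has the law of $\max$ of $k_i+1$ i.i.d.\ weights; these $q$ maxima are independent across $i$. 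Now $\min$ of independent maxima — I want to compare its $p$-th moment to $\E[\min(\tau_1(0),\ldots,\tau_q(0))^p]$. The trick is: $\min_i \max_{0\le j\le k_i}\tau_i^{(j)}$ can be split by which index $j$ achieves the $i$-th maximum; by a union over the $\prod_i(k_i+1)$ choices of which copy is the max within each block, $\Pr(\min_i \sup_s\tau_i(s) > x) \le \prod_i(k_i+1)\,\Pr(\min_i \tau_i(0) > x)$, since $\{\min_i\max_j \tau_i^{(j)} > x\}$ is the union over index-tuples $(j_1,\ldots,j_q)$ of $\{\min_i \tau_i^{(j_i)} > x\}$, each of which has probability exactly $\Pr(\min(\tau_1(0),\ldots,\tau_q(0))>x)$ by i.i.d.-ness. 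Hence, conditionally, $\E[(\min_i\sup_s\tau_i(s))^p \mid N_i = k_i] \le \prod_i(k_i+1)\,\E[\min(\tau_1(0),\ldots,\tau_q(0))^p]$.

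Then I would take expectation over $(N_1,\ldots,N_q)$, using independence: $\E[\prod_{i=1}^q (N_i+1)] = \prod_{i=1}^q \E[N_i+1] = (1+\delta)^q$, since each $N_i \sim \textup{Poisson}(\delta)$ has $\E[N_i+1] = 1+\delta$. Combining gives exactly
$$
\E\Big[\big(\min_i\sup_{s\in[0,\delta]}\tau_i(s)\big)^p\Big] \le (1+\delta)^q\,\E\big[\min(\tau_1(0),\ldots,\tau_q(0))^p\big],
$$
which is the claim. The one point requiring a little care — and the main (modest) obstacle — is justifying the conditional union bound $\Pr(\min_i\max_j\tau_i^{(j)}>x \mid \mathbf{N}=\mathbf{k}) \le \prod_i(k_i+1)\Pr(\min_i\tau_i(0)>x)$: one must check that conditioning on $\mathbf{N}=\mathbf{k}$ leaves the weights $\{\tau_i^{(j)}\}$ i.i.d.\ (which holds by the independence of the resampling values from the clock process), and that the event decomposes as the stated union, so that subadditivity of probability applies. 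With those observations the estimate is immediate and no further calculation is needed.
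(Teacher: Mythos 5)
Your proof is correct and follows essentially the same route as the paper: both arguments reduce the supremum over $[0,\delta]$ to a maximum of $N_e+1$ i.i.d.\ copies, apply a union bound yielding the factor $k+1$, use independence across the $q$ edges, and conclude via $\E[N_e+1]=1+\delta$ together with the tail-integral formula for $p$-th moments. The only cosmetic difference is that the paper first bounds the single-edge tail $\Pr(\sup_{s}\tau_e(s)>x)\le(1+\delta)\Pr(\tau_e>x)$ and then takes the $q$-fold product, whereas you condition on the full vector $(N_1,\ldots,N_q)$ before averaging; the content is identical.
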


\begin{proof}
Since $\E[X^p]=p\int_0^\infty x^{p-1}\Pr(X>x)\,dx$ and $\Pr\big(\min(X_1,\ldots,X_m)>x\big)=\Pr(X_1>x)^m$ for i.i.d.\ non-negative random variables, the result follows from the following observation.
\bea
\begin{split}
\Pr\Big(\sup_{s\in[0,\delta]}\tau_e(s)>x\Big)\;&=\;\sum_{k=0}^\infty \Pr\Big(\max_{j=1,\ldots,k+1}\tau_e^{(j)}>x\,\Big|N_e=k\Big)\Pr(N_e=k)\\
&\leq\;\sum_{k=0}^\infty(k+1)\Pr(\tau_e>x)\Pr(N_e=k)\;=\;\Pr(\tau_e>x)\E[1+N_e].\qedhere
\end{split}
\eea
\end{proof}

We will derive Theorem~\ref{thm:DST} from Theorem~\ref{thm:HREcone}, via~\eqref{eq:dynamicdomination} and the continuity of the time constant with respect to the weight distribution.

\begin{proof}[{\bf Proof of Theorem~\ref{thm:DST}}]
The leftwards implication is an immediate consequence of Theorem~\ref{thm:HREcone}, so it suffices to prove the rightwards going one. Fix $\eps>0$ and $p>0$, and let $\Hc=\Hc(u,c)$. Let also $\hat{Y}$ denote the minimum of $2d$ independent variables distributed as $\hat{\tau}_e$, and let $\bar{\mu}$ and $\hat{\mu}$ denote the time constants for the lattice $\Z^d$ associated with the distribution functions $\bar F_\delta(x)=\Pr(\bar\tau_e\le x)$ and $\hat F_\delta(x)=\Pr(\hat\tau_e\le x)$.

As $\delta$ tends to zero, both $\bar F_\delta$ and $\hat F_\delta$ converge weakly to the distribution of $\tau_e$. Hence, by~\eqref{eq:mucontinuity} we can choose $\delta=\delta(\eps)>0$ small enough for
$$
\big|\hat{\mu}(z/|z|)-\bar{\mu}(z/|z|)\big|<\eps/4\quad\text{for every }z\in\Z^d.
$$
Together with~\eqref{eq:dynamicdomination} we thus obtain for $x\ge|z|$ and for small enough $\delta>0$ that
\bea
\begin{aligned}
\Pr\Big(\sup_{s\in[0,\delta]}|T^{(s)}_\Hc(0,z)-\muZ(z)|>\eps x\Big)\;&\le\;\Pr\Big(\hat{T}_\Hc(0,z)-\muZ(z)>\frac{\eps x}{2}\Big)+\Pr\Big(\bar{T}_\Hc(0,z)-\muZ(z)<-\frac{\eps x}{2}\Big)\\
&\le\;\Pr\Big(\hat{T}_\Hc(0,z)-\hat\mu(z)>\frac{\eps x}{4}\Big)+\Pr\Big(\bar{T}_\Hc(0,z)-\bar\mu(z)<-\frac{\eps x}{4}\Big).
\end{aligned}
\eea
According to Lemma \ref{lma:dynamicmoment}, $\E[Y^p]<\infty$ implies that also $\E[\hat{Y}^p]<\infty$, and Theorem~\ref{thm:HREcone} shows that
$$
\E[\hat{Y}^p]<\infty\quad\Rightarrow\quad\sum_{z\in\iVH}|z|^{p-d}\,\Pr\Big(\sup_{s\in[0,\delta]}|T^{(s)}_\Hc(0,z)-\muZ(z)|>\eps|z|\Big)<\infty.
$$
The rightwards implication of the theorem is thus obtained by covering $[0,1]$ with finitely many intervals of length $\delta$.
\end{proof}

\section{Extending the shape theorem to the boundary}\label{sec:discussion}

We end this paper with a discussion under which conditions the convergence towards an asymptotic shape can be extended to hold for all points of the graph $\Hc=\Hc(u,c)$ simultaneously. Due to the equivalence between Euclidean and $\ell^1$-norms, we choose in this section to work directly with the latter. We begin with a weak law obtained under a minimal moment condition on the weight distribution.

\begin{prop}\label{prop:weakST}
Assume that $\E[Y^p]<\infty$ for some $p>0$. Then, for any $\eps>0$, unit vector $u\in\R^d$, and $c>0$,
$$
\limsup_{z\in\VH:\,\|z\|\to\infty}\Pr\big(|T_{\Hc(u,c)}(0,z)-\muZ(z)|>\eps\|z\|\big)=0.
$$
\end{prop}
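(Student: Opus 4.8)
The plan is to prove Proposition~\ref{prop:weakST} by reducing it to the already-established large deviations estimate for interior points (Theorem~\ref{thm:LDEcone}) together with the thin-cylinder comparison of Proposition~\ref{prop:Ycond}. The only genuinely new content relative to the interior case is that boundary points must be handled, and the key observation is that a boundary point $z\in\bVH$ can be reached from the origin by first travelling cheaply along a path that stays inside the interior $\iVH$ up to a point $y$ close to $z$, and then taking a short final detour (of $\ell^1$-length at most some fixed constant times $4\sqrt d$) from $y$ to $z$ inside $\Hc$. Since $\Hc=\Hc(u,c)$ is induced by $\bigcup_{a\ge0}B(au,ca+4\sqrt d)$, every boundary vertex lies within distance $4\sqrt d$ of the cone $\Bc(u,c)$, so such a $y\in\iVH$ with $\|y-z\|\le C(d)$ can always be found once $\|z\|$ is large.

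First I would reduce to interior points: given $z\in\bVH$, pick $y\in\iVH$ with $\|y-z\|\le C(d)$ and $\|y\|$ comparable to $\|z\|$. Subadditivity and the coupling give
\be
T_\Hc(0,z)-\muZ(z)\;\le\;\big(T_\Hc(0,y)-\muZ(y)\big)+\big(\muZ(y)-\muZ(z)\big)+T_\Hc(y,z).
\ee
By Lipschitz continuity of $\muZ$, the middle term is at most $\muZ(\ebf_1)\|y-z\|\le\muZ(\ebf_1)C(d)$, which is $o(\|z\|)$ and hence eventually below $\eps\|z\|/3$. For the first term, Theorem~\ref{thm:LDEcone} (with any $q\ge1$, say $q=1$) gives $\Pr(T_\Hc(0,y)-\muZ(y)>\eps\|z\|/3)\le M\Pr(Y>\|z\|/(3M'))+M/\|z\|$, which tends to $0$ as $\|z\|\to\infty$ because $\E[Y^p]<\infty$ for some $p>0$ forces $\Pr(Y>t)\to0$. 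For the last term $T_\Hc(y,z)$, since $y$ and $z$ both lie within distance $\sqrt d$ of a line segment through a thin cylinder of radius $4\sqrt d$ contained in $\Hc$, Proposition~\ref{prop:Ycond} applies and bounds $\Pr(T_\Hc(y,z)>\eps\|z\|/3)$ by $9^{2d}C(d)\,\Pr(Y>c'\|z\|)$, which also vanishes. Combining the three pieces gives $\Pr(T_\Hc(0,z)-\muZ(z)>\eps\|z\|)\to0$ uniformly over $z\in\bVH$ with $\|z\|\to\infty$; for interior $z$ the same (indeed stronger) bound is immediate from Theorem~\ref{thm:LDEcone}.

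For the lower-tail bound $\Pr(T_\Hc(0,z)-\muZ(z)<-\eps\|z\|)$, I would use that $T_\Hc(0,z)\ge T(0,z)$, so this is at most $\Pr(T(0,z)-\muZ(z)<-\eps\|z\|)$, and then invoke the lattice large deviations estimate (the $\Z^d$ version of Theorem~\ref{thm:LDEcone}, established in~\cite{A13} and already used in the proof of Theorem~\ref{thm:LDEcone} above), which gives a bound of the same form tending to $0$. Adding the two tails via the identity $\Pr(|X|>t)=\Pr(X<-t)+\Pr(X>t)$ completes the proof. I expect the main (minor) obstacle to be purely geometric bookkeeping: verifying that for every $z\in\bVH$ with $\|z\|$ large one can locate an interior anchor point $y$ that is simultaneously (i) within a fixed distance $C(d)$ of $z$, (ii) at distance at least $R_2$ from $\bVH$ so that Theorem~\ref{thm:LDEcone} applies with the stated constants, and (iii) reachable from $z$ inside a thin cylinder contained in $\Hc$ so that Proposition~\ref{prop:Ycond} applies — this forces $c>0$ to enter only through $C(d)$ and the constants, not through the rate. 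None of this is deep, but it is where care is required; everything else is a routine combination of estimates already in hand.
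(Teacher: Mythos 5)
Your decomposition is the same as the paper's (interior anchor point, subadditivity, Lipschitz continuity of $\muZ$, Theorem~\ref{thm:LDEcone} for the anchor, a separate bound for the short connection, and the lattice estimate for the lower tail), but the step where you bound the connection term $T_\Hc(y,z)$ via Proposition~\ref{prop:Ycond} fails as stated. That proposition controls $T_\Gc(y,z)$ for $\Gc$ the subgraph induced by a cylinder of radius $r\ge4\sqrt d$ around a segment with both endpoints within $\sqrt d$ of the segment; to transfer the bound to $T_\Hc(y,z)$ you need $\Gc\subset\Hc$, since otherwise the coupling inequality $T_\Hc\le T_\Gc$ goes the wrong way. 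Now the region $\bigcup_{a\ge0}B(au,ca+4\sqrt d)$ inducing $\Hc$ is exactly the closed $4\sqrt d$-neighbourhood of the convex cone $\Bc(u,c)$, and for a point $x\notin\Bc(u,c)$ the ball $B(x,4\sqrt d)$ already protrudes outside this region (move $4\sqrt d$ further along the ray from the projection of $x$ onto the cone: that point is at distance $\dist(x,\Bc(u,c))+4\sqrt d>4\sqrt d$ from the cone). Hence any admissible cylinder axis must lie inside $\Bc(u,c)$, and its $\sqrt d$-neighbourhood only reaches vertices at distance at most $\sqrt d$ outside the cone. But boundary vertices $z\in\bVH$ can lie at distance up to $4\sqrt d$ outside $\Bc(u,c)$, so for such $z$ there is no cylinder of radius $\ge4\sqrt d$ contained in $\Hc$ passing within $\sqrt d$ of $z$, and Proposition~\ref{prop:Ycond} cannot be invoked. (A second, smaller mismatch: Proposition~\ref{prop:Ycond} is stated for segments of the form $\{au:a\in[b,c]\}$ through the origin, whereas a short segment joining $y$ and $z$ far from the origin is not of that form; this would need a translated variant or a remark that the proof of Lemma~\ref{lma:2dpaths} is insensitive to this.)

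The gap is localized and the repair is exactly what the paper does: for the short connection no cylinder machinery is needed at all. Each boundary vertex $z$ is within Euclidean distance $5\sqrt d$, hence $\ell^1$-distance at most $5d\sqrt d$, of an interior vertex $v_z$, and one can connect them by a single path $\Gamma\subset\Hc$ of length at most $5d\sqrt d$; then $\Pr\big(T_\Hc(z,v_z)>x\big)\le\Pr\big(T(\Gamma)>x\big)\le 5d\sqrt d\,\Pr\big(\tau_e>x/(5d\sqrt d)\big)$, which tends to $0$ as $x\to\infty$ with no moment hypothesis whatsoever. With that replacement the remainder of your argument is correct and coincides with the paper's proof; note also that the paper applies Theorem~\ref{thm:LDEcone} directly to $|T_\Hc(0,v_z)-\muZ(v_z)|$, which already covers the lower tail, so the separate appeal to the lattice estimate of~\cite{A13}, while valid, is not needed.
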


\begin{proof}
First observe that it is boundary points that we need to worry about, since the superior limit taken over interior points follows via Theorem~\ref{thm:LDEcone}. Next, note that each boundary point $z$ is within Euclidean distance $5\sqrt{d}$ from some interior point $v_z$, by construction. Since $z$ and $v_z$ are within Euclidean distance $5\sqrt{d}$, they are also within $\ell^1$-distance $5d\sqrt{d}$, and there is thus a path $\Gamma$ of no greater length connecting the two. Since $T_\Hc(z,v_z)\le T(\Gamma)$, and $T(\Gamma)>x$ implies that $\tau_e>x/|\Gamma|$ for some $e\in\Gamma$, it follows that
$$
\Pr\big(T_\Hc(z,v_z)>x\big)\,\le\,5d\sqrt{d}\,\Pr\big(\tau_e>x/(5d\sqrt{d})\big).
$$

We now argue that the superior limit taken over boundary points also equals zero. Choose $p>0$ be such that $\E[Y^p]<\infty$, and fix $\eps>0$ and $q=1$. Let $M=M(\eps,d,p,u)$ be given as in Theorem~\ref{thm:LDEcone}. Then, for every $z\in\bVH$
$$
|T_\Hc(0,z)-\muZ(z)|\,\le\,|T_\Hc(0,v_z)-\muZ(v_z)|+T_\Hc(v_z,z)+|\muZ(v_z)-\muZ(z)|.
$$
By Lipschitz continuity of $\muZ$ we have $|\muZ(v_z)-\muZ(z)|\le5d\sqrt{d}\,\muZ(\ebf_1)$. So, combining the above estimate and Theorem~\ref{thm:LDEcone} we have for $z\in\bVH$ sufficiently far from the origin
\bea
\begin{aligned}
\Pr\big(|T_\Hc(0,z)-\muZ(z)|>3\eps\|z\|\big)\;&\le\;\Pr\big(|T_\Hc(0,v_z)-\muZ(v_z)|>\eps\|z\|\big)+\Pr\big(T_\Hc(z,v_z)>\eps\|z\|\big)\\
&\le\;M\,\Pr(Y>\|z\|/M)+\frac{M}{\|z\|}+5d\sqrt{d}\,\Pr\big(\tau_e>\eps\|z\|/(5d\sqrt{d})\big),
\end{aligned}
\eea
which clearly vanishes as $\|z\|\to\infty$.
\end{proof}

\subsection{A necessary and sufficient condition for convergence in half-space}

The precise condition for stronger modes of convergence turns out to vary quite a bit with the choice of $u$ and $c$. The reason being the varying number of neighbours among boundary points. Let us first consider the case when the underlying graph is a half-space. For $q\ge1$, let $Y_q$ denote the minimum of $q$ independent random variables distributed as $\tau_e$.

\begin{prop}
Let $\Hc=\Hc(\ebf_1,1)$. For every $\eps>0$ and $p\in(0,2d)$
$$
\E[Y^p]<\infty\quad\Leftrightarrow\quad\sum_{z\in\VH}\|z\|^{p-d}\,\Pr\big(|T_\Hc(0,z)-\muZ(z)|>\eps\|z\|\big)<\infty.
$$
For $p\ge2d$, finiteness of $\E[Y^p]$ and $\E[Y_{2d-1}^{p-1}]$ is both necessary and sufficient.
\end{prop}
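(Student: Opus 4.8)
The plan is to treat the interior and boundary points separately, just as in the proof of Proposition~\ref{prop:weakST}, but now tracking the precise moment condition rather than a mere weak law. For interior points Theorem~\ref{thm:HREcone} already gives exactly the equivalence with $\E[Y^p]<\infty$, with no restriction on $p$, so the entire content of the new statement concerns the boundary $\bVH$. When $\Hc=\Hc(\ebf_1,1)$ is a half-space, a boundary site $z$ (one lying on the bounding hyperplane $\{z_1=0\}$, roughly speaking) has only $2d-1$ incident edges inside $\Hc$ instead of $2d$; so the natural lower bound on $T_\Hc(0,z)$ is now governed by $Y_{2d-1}$ (the minimum of $2d-1$ i.i.d.\ weights) for the step out of $z$, while the step out of the origin still sees all $2d$ edges and so contributes a $Y$. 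First I would make this lower bound precise: $T_\Hc(0,z)\ge \min(Y,\,\text{weight of the cheapest edge at }z)$, and since for a boundary point that cheapest edge is distributed as $Y_{2d-1}$, one gets $\Pr(|T_\Hc(0,z)-\muZ(z)|>\eps\|z\|)\ge c\,\Pr(Y_{2d-1}>(\muZ(\ebf_1)+\eps)\|z\|)$ for $\|z\|$ large. Since the number of boundary points at distance $n$ is of order $n^{d-1}$, summing $\|z\|^{p-d}$ over them reproduces $\sum_n n^{p-1}\Pr(Y_{2d-1}>cn)$, which is finite iff $\E[Y_{2d-1}^p]<\infty$.

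Next comes the arithmetic that produces the threshold $2d$: one has $\Pr(Y_{2d-1}>x)=\Pr(\tau_e>x)^{2d-1}$ and $\Pr(Y>x)=\Pr(\tau_e>x)^{2d}$, so $\Pr(Y_{2d-1}>x)=\Pr(Y>x)^{(2d-1)/(2d)}$. A short computation shows $\E[Y_{2d-1}^p]<\infty\iff \E[Y^{p(2d-1)/(2d)\cdot(2d/(2d-1))}]$... more directly: $\E[Y_{2d-1}^{p}]<\infty\iff \int x^{p-1}\Pr(\tau_e>x)^{2d-1}dx<\infty$, whereas $\E[Y^{p+1}]<\infty\iff\int x^{p}\Pr(\tau_e>x)^{2d}dx<\infty$ and $\E[Y_{2d-1}^{p-1}\cdot Y]$-type quantities can be compared. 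The clean statement to extract is: $\E[Y^p]<\infty$ already forces $\E[Y_{2d-1}^{p'}]<\infty$ for every $p'<p\cdot\frac{2d}{2d-1}$, and in particular for $p'=p$ whenever $p<2d$ (since then $p<2d$ gives $p\cdot\frac{2d}{2d-1}>p$ trivially — actually one needs the sharper bookkeeping: $\E[Y^p]<\infty$ is equivalent to $\int x^{p-1}\Pr(\tau_e>x)^{2d}\,dx<\infty$, and by Hölder/direct comparison this implies $\int x^{p'-1}\Pr(\tau_e>x)^{2d-1}dx<\infty$ precisely when $p'-1<\frac{2d-1}{2d}(p-1)+\frac{2d-1}{2d}$... ). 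The upshot I would establish is the dichotomy: for $p<2d$, $\E[Y^p]<\infty\Rightarrow\E[Y_{2d-1}^p]<\infty$ (so the boundary sum is controlled by the same condition as the interior), whereas for $p\ge 2d$ the two conditions genuinely separate and one must additionally assume $\E[Y_{2d-1}^{p-1}]<\infty$ — the shift by $1$ in the exponent arising because the boundary edge at $z$ is traversed once but the summation weight is $\|z\|^{p-d}$ against $\asymp n^{d-1}$ boundary sites, i.e.\ one effectively needs $\sum n^{p-1}\Pr(Y_{2d-1}>cn)<\infty$ but the relevant comparison with the large-deviations estimate costs one power.

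For the sufficiency direction I would run the decomposition $|T_\Hc(0,z)-\muZ(z)|\le |T_\Hc(0,v_z)-\muZ(v_z)|+T_\Hc(v_z,z)+|\muZ(v_z)-\muZ(z)|$ with $v_z$ a nearby interior point as in Proposition~\ref{prop:weakST}; the first term is summable by Theorem~\ref{thm:HREcone}, the third is deterministically $O(1)$ by Lipschitz continuity of $\muZ$, so everything reduces to bounding $\sum_{z\in\bVH}\|z\|^{p-d}\Pr(T_\Hc(v_z,z)>\eps\|z\|)$. Here I would route the short connecting path from $z$ to $v_z$ through edges, most of which are interior (seeing $2d$ neighbours) but with the first step at $z$ constrained to its $2d-1$ available edges; applying a Proposition~\ref{prop:Ycond}-style estimate along this bounded-length path gives $\Pr(T_\Hc(v_z,z)>\eps\|z\|)\le C\,\Pr(Y_{2d-1}>c\|z\|)+ C\,\Pr(Y>c\|z\|)$, and then $\sum_n n^{p-1}(\Pr(Y_{2d-1}>cn)+\Pr(Y>cn))<\infty$ under the stated hypotheses. \textbf{The main obstacle} I anticipate is the careful bookkeeping in the middle paragraph — getting the exponent shift exactly right (why $\E[Y_{2d-1}^{p-1}]$ and not $\E[Y_{2d-1}^{p}]$, and why the threshold is exactly $p=2d$) — which requires matching the combinatorial count $\asymp n^{d-1}$ of boundary points against the power $\|z\|^{p-d}$ and against the tail comparisons in Proposition~\ref{prop:Ycond}, rather than any genuinely new probabilistic input; the large-deviations machinery of Theorem~\ref{thm:LDEcone} and the shape theorem of Theorem~\ref{thm:HREcone} do all the analytic heavy lifting.
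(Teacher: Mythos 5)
Your overall architecture matches the paper's: interior points via Theorem~\ref{thm:HREcone}, boundary points via the subadditive decomposition through a nearby interior point $v_z$, a Proposition~\ref{prop:Ycond}-style bound on $T_\Hc(z,v_z)$ in terms of $Y_{2d-1}$, and a Markov-type tail comparison between $Y_{2d-1}$ and $Y$ to locate the threshold. But there is a genuine error at the exact point you flag as your ``main obstacle,'' and it is not mere bookkeeping: you count the boundary points at $\ell^1$-distance $n$ as being of order $n^{d-1}$. For $\Hc(\ebf_1,1)$ the interior is the open half-space $\{z_1>0\}$, so $\bVH$ is a slab of \emph{bounded} thickness (roughly $-4\sqrt{d}<z_1\le0$); the number of boundary sites at distance $n$ is therefore of order $n^{d-2}$, one dimension lower than for interior points. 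Hence the boundary contribution is $\sum_n n^{(p-d)+(d-2)}(\cdots)=\sum_n n^{p-2}\Pr(Y_{2d-1}>cn)$, which is governed by $\E[Y_{2d-1}^{p-1}]$ --- this is the entire explanation of the exponent shift you could not account for. With your count you are driven to the wrong target moment $\E[Y_{2d-1}^{p}]$, and this is why your threshold computation cannot close: using $\Pr(Y_{2d-1}>x)=\Pr(Y>x)^{1-1/(2d)}\le(x^{-p}\E[Y^p])^{1-1/(2d)}$ one gets
$$
\E[Y_{2d-1}^{p-1}]\;\le\;(p-1)\Big(1+\E[Y^p]^{1-1/(2d)}\int_1^\infty x^{p/(2d)-2}\,dx\Big),
$$
finite precisely when $p<2d$, whereas the analogous integral for $\E[Y_{2d-1}^{p}]$ is $\int x^{p/(2d)-1}\,dx$, which never converges. (Your stated implication ``$\E[Y^p]<\infty\Rightarrow\E[Y_{2d-1}^{p'}]<\infty$ for $p'<p\cdot\tfrac{2d}{2d-1}$'' also has the fraction inverted; Markov only yields it for $p'<p\cdot\tfrac{2d-1}{2d}$.)

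Two smaller points. For necessity of $\E[Y_{2d-1}^{p-1}]$ when $p\ge2d$ one should restrict to boundary sites with exactly $2d-1$ incident edges (the outermost layer of the slab), which still form a set of order $n^{d-2}$ at distance $n$; the lower bound $T_\Hc(0,z)\ge Y_\Hc(z)$ then gives $\sum_n n^{p-2}\Pr(Y_{2d-1}>(\muZ(\ebf_1)+\eps)n)$ as required. For sufficiency, your reduction to $\sum_{z\in\bVH}\|z\|^{p-d}\Pr(T_\Hc(v_z,z)>\eps\|z\|)$ is correct and, once the count is fixed to $n^{d-2}$, is exactly the paper's argument ($2d-1$ disjoint short paths from $z$ to its projection onto the bounding hyperplane, giving an upper bound of the form $C\,\Pr(Y_{2d-1}>\eps\|z\|/C)$, summable under $\E[Y_{2d-1}^{p-1}]<\infty$).
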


\begin{proof}
It is immediate from Theorem~\ref{thm:HREcone} that $\E[Y^p]<\infty$ is both necessary and sufficient for the sum over interior points to be finite. It therefore suffices to consider points on the boundary.

Let us first argue for the necessity of $\E[Y_{2d-1}^{p-1}]<\infty$ when $p\ge2d$. Let $Y_\Hc(z)$ denote the minimum among the edge-weights associated to the edges adjacent to $z$ (and present in $\Hc$). A lower bound on $|T_\Hc(0,z)-\muZ(z)|$ is thus given by $Y_\Hc(z)-\muZ(\ebf_1)\|z\|$. Consequently
$$
\sum_{z\in\bVH}\|z\|^{p-d}\,\Pr\big(|T_\Hc(0,z)-\muZ(z)|>\eps\|z\|\big)\ge\sum_{z\in\bVH}\|z\|^{p-d}\,\Pr\big(Y_\Hc(z)>(\muZ(\ebf_1)+\eps)\|z\|\big).
$$
Since the number of boundary points at $\ell^1$-distance $n$ from the origin with precisely $2d-1$ neighbours is of order $n^{d-2}$, it follows that $\E[Y_{2d-1}^{p-1}]<\infty$ is necessary for summability.

We continue proving sufficiency of $\E[Y_{2d-1}^{p-1}]<\infty$, together with $\E[Y^p]<\infty$. For each boundary point $z\in\bVH$, let $v_z$ denote its projection to the hyper-plane $\{(z_1,\ldots,z_d)\in\Z^d:z_1=0\}$. Note that there are $2d-1$ disjoint paths of length at most $\|v_z-z\|+2$ between $z$ and $v_z$. As in the proof of Proposition~\ref{prop:weakST} we obtain for $z\in\bVH$ sufficiently far from the origin that
$$
\Pr\big(|T_\Hc(0,z)-\muZ(z)|>3\eps\|z\|\big)\;\le\;\Pr\big(|T_\Hc(0,v_z)-\muZ(v_z)|>\eps\|z\|\big)+\Pr\big(T_\Hc(z,v_z)>\eps\|z\|\big).
$$
Since $\|v_z-z\|$ is at most $4\sqrt{d}$, the $2d-1$ paths between $z$ and $v_z$ have length at most $6\sqrt{d}$. Just as in the proof of Proposition~\ref{prop:Ycond} we find that $(6\sqrt{d})^{2d-1}\,\Pr\big(Y_{2d-1}>x/(6\sqrt{d})\big)$ is an upper bound for $\Pr\big(T_\Hc(z,v_z)>x\big)$. Since the number of boundary points at $\ell^1$-distance $n$ from the origin grows at order $n^{d-2}$, we conclude that
\bea
\begin{aligned}
\sum_{z\in\bVH}\|z\|^{p-d}\,\Pr\big(|T_\Hc(0,z)-\muZ(z)|>3\eps\|z\|\big)\;&\le\;\sum_{z\in\bVH}\|z\|^{p-d}\,\Pr\big(|T_\Hc(0,v_z)-\muZ(v_z)|>\eps\|z\|\big)\\
&\quad\;+(6\sqrt{d})^{2d-1}\sum_{n=1}^\infty n^{p-2}\,\Pr\big(Y_{2d-1}>\eps n/(6\sqrt{d})\big).
\end{aligned}
\eea
Assume that $\E[Y^p]<\infty$. An application of Theorem~\ref{thm:HREcone} (or Theorem~\ref{thm:LDEcone}) shows that the first sum in the right-hand side is finite. Note that the second sum is trivially finite for $p<1$; For $p\ge1$ a sufficient condition is that $\E[Y_{2d-1}^{p-1}]<\infty$.

It now remains to show that $\E[Y^p]<\infty$ implies $\E[Y_{2d-1}^{p-1}]<\infty$ for $p<2d$. Via Markov's inequality we see that
$$
\Pr(Y_{2d-1}>x)\;=\;\Pr(\tau_e>x)^{2d-1}\;=\;\Pr(Y>x)^{1-1/2d}\;\le\;\big(x^{-p}\,\E[Y^p]\big)^{1-1/2d}.
$$
As a consequence,
$$
\E[Y_{2d-1}^{p-1}]\;=\;(p-1)\int_0^\infty x^{p-2}\,\Pr(Y_{2d-1}>x)\,dx\;\le\;(p-1)\Big(1+\E[Y^p]^{1-1/2d}\int_1^\infty x^{p/2d-2}\,dx\Big),
$$
which is finite if $\E[Y^p]<\infty$ and $p<2d$.
\end{proof}

\subsection{A general sufficient condition}

As above, let $Y_\Hc(z)$ denote the minimum weight among the edges adjacent to $z\in\VH$. A fundamental restriction on a relation between summability of tail probabilities over boundary points of $\Hc=\Hc(u,c)$ and moments of the weight distribution is given by the following inequality
\be\label{eq:Ylower}
Y_\Hc(z)-\mu(z)\,\le\,|T_\Hc(0,z)-\muZ(z)|.
\ee
Consequently, summability is directly related to the number of neighbours of a site. Let us first note that the number of boundary points with only one neighbour may be large. For $d=3$, let $u=(1,1,1)/\sqrt{3}$ and $b\ge4\sqrt{3}$ be an integer. Then $c>0$ may be chosen such that the graph $\Gc$ induced by $\bigcup_{a\ge0}B(au,ca+b)$ contains every point along the diagonal $\{(z_1,z_2,z_3)\in\Z^3:z_1=z_2\ge0,z_3=-b\}$, but no other point in the plane $\{z_3=-b\}$. Each point on the diagonal has only one neighbour in $\Gc$. Since the number of such points at distance $n$ from the origin is positive, it follows via~\eqref{eq:Ylower} that $\E[\tau_e^p]<\infty$ is necessary for $\sum_{z\in\Vc_\Gc}\|z\|^{p-1}\,\Pr\big(|T_\Gc(0,z)-\muZ(z)|>\eps\|z\|\big)$ to be finite. A more careful investigation of the number of neighbours to boundary points reveals a complementary sufficient condition: For every $\eps>0$, $c>0$, unit vector $u\in\R^d$ and $p\ge1$,
$$
\E[\tau_e^p]<\infty\quad\Rightarrow\quad\sum_{z\in\VH}\|z\|^{p-1}\,\Pr\big(|T_{\Hc(u,c)}(0,z)-\muZ(z)|>\eps\|z\|\big)<\infty.
$$
A proof of the latter statement requires a somewhat technical geometric argument, but is for completeness included in the appendix.

\begin{acknow}
The author would like to thank Olle H\"aggstr\"om for suggesting the dynamical version of first-passage percolation, as well as Robert Morris and Graham Smith for discussing some geometrical issues.
\end{acknow}

\appendix

\section{Proof of the general sufficient condition for summability}

We here prove the following general statement.

\begin{prop}\label{prop:generalboundary}
For every $\eps>0$, $c>0$, unit vector $u\in\R^d$ and $p\ge1$
$$
\E[\tau_e^p]<\infty\quad\Rightarrow\quad\sum_{z\in\VH}\|z\|^{p-1}\,\Pr\big(|T_{\Hc(u,c)}(0,z)-\muZ(z)|>\eps\|z\|\big)<\infty.
$$
\end{prop}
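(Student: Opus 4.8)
The plan is to reduce the boundary estimate to the interior estimate (Theorem~\ref{thm:HREcone}/Theorem~\ref{thm:LDEcone}) by walking each boundary point $z\in\bVH$ into the interior along a short, well-chosen collection of disjoint paths, and then account for the contribution of boundary points according to how many neighbours they have in $\Hc$. The interior contribution to the sum is already finite under $\E[Y^p]<\infty$ (hence certainly under $\E[\tau_e^p]<\infty$), so everything hinges on the boundary. Write, for a boundary point $z$ and a nearby interior point $v_z$ (at bounded $\ell^1$-distance, say at most $5d\sqrt d$, which exists by construction of $\Hc$),
\be
|T_\Hc(0,z)-\muZ(z)|\,\le\,|T_\Hc(0,v_z)-\muZ(v_z)|+T_\Hc(z,v_z)+|\muZ(v_z)-\muZ(z)|,
\ee
where the last term is bounded by $5d\sqrt d\,\muZ(\ebf_1)$ via Lipschitz continuity and so only shifts $\eps$. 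Thus
\be
\Pr\big(|T_\Hc(0,z)-\muZ(z)|>3\eps\|z\|\big)\,\le\,\Pr\big(|T_\Hc(0,v_z)-\muZ(v_z)|>\eps\|z\|\big)+\Pr\big(T_\Hc(z,v_z)>\eps\|z\|\big),
\ee
and summing the first term over $z\in\bVH$ is controlled by Theorem~\ref{thm:LDEcone} applied at the interior points $v_z$ (several $z$ may share a $v_z$, but only boundedly many, so this is harmless). It remains to bound $\sum_{z\in\bVH}\|z\|^{p-1}\Pr\big(T_\Hc(z,v_z)>\eps\|z\|\big)$.

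The key point is to choose, for each $z\in\bVH$, as many edge-disjoint short paths from $z$ into the interior as its local geometry allows. If $z$ has $k=k(z)$ neighbours in $\Hc$, one should be able to find $k$ edge-disjoint paths of bounded length (bounded in terms of $d$ only) from $z$ to a common interior point $v_z$ — this is the same style of argument as Lemma~\ref{lma:2dpaths} and Proposition~\ref{prop:Ycond}, carried out locally near $z$; since $\Hc$ is induced by a convex-in-slices region, a boundary point with $k$ lattice neighbours present has enough room for $k$ such disjoint detours. Then, exactly as in the proof of Proposition~\ref{prop:Ycond}, with $C=C(d)$ the common length bound,
\be
\Pr\big(T_\Hc(z,v_z)>Cx\big)\,\le\,C^{k(z)}\,\Pr(Y_{k(z)}>x)\,\le\,C^{k(z)}\,\Pr(\tau_e>x)^{k(z)},
\ee
so that $\Pr\big(T_\Hc(z,v_z)>\eps\|z\|\big)\le C'\,\Pr(\tau_e>c'\|z\|)^{k(z)}$ for constants depending only on $d,\eps$. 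Grouping boundary points by their value of $k$, it therefore suffices to show that for each $k\ge1$,
\be
\sum_{\substack{z\in\bVH\\ k(z)=k}}\|z\|^{p-1}\,\Pr(\tau_e>c'\|z\|)^{k}\,<\,\infty.
\ee
Since $\Pr(\tau_e>c'\|z\|)^{k}\le C''\|z\|^{-(k-1)p}\E[\tau_e^p]^{\,k-1}\cdot\Pr(\tau_e>c'\|z\|)$ by Markov's inequality applied to $k-1$ of the $k$ factors, this sum is dominated by $\E[\tau_e^p]^{k-1}\sum_{z}\|z\|^{p-1-(k-1)p}\Pr(\tau_e>c'\|z\|)$; and one checks that the number of boundary points at $\ell^1$-distance $n$ with exactly $k$ neighbours grows at most polynomially, with an exponent $d_k$ that \emph{decreases} as $k$ increases ($k=1$ being the extreme case, where this exponent is as small as a single direction can afford), in such a way that $\|z\|^{p-1-(k-1)p}$ times this count, summed in $n$, reduces after comparison with $\E[\tau_e^p]=p\int_0^\infty x^{p-1}\Pr(\tau_e>x)\,dx$ to a finite quantity whenever $\E[\tau_e^p]<\infty$. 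The $k=1$ case is the tightest and is exactly where the hypothesis is $\E[\tau_e^p]<\infty$ rather than $\E[Y^p]<\infty$; for $k\ge2$ the extra decay $\|z\|^{-(k-1)p}$ makes the series converge with room to spare.

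The main obstacle is the geometric bookkeeping: proving that a boundary point with $k$ neighbours really does admit $k$ edge-disjoint uniformly-short paths to a common interior point, and simultaneously controlling the number of distance-$n$ boundary points with exactly $k$ neighbours by a polynomial whose degree shrinks appropriately with $k$. Both statements are intuitively clear from the convex-slices description of $\Hc$, but making them precise for an arbitrary unit vector $u$ (where cross-sections are ellipsoids rather than balls, as in the final paragraph of the proof of Proposition~\ref{prop:cone}) requires a careful, if routine, case analysis — this is the "somewhat technical geometric argument" alluded to in the text, and it is the only step that is not a direct application of the large-deviations estimate and Markov's inequality.
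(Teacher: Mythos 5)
Your overall strategy coincides with the paper's: reduce to a nearby interior point $v_z$ via subadditivity and Lipschitz continuity of $\mu$, control the interior term by Theorem~\ref{thm:LDEcone}/Theorem~\ref{thm:HREcone}, and handle $T_\Hc(z,v_z)$ by stratifying boundary points according to how many disjoint short paths lead into the interior, trading the power $\Pr(\tau_e>\cdot)^k$ against the polynomial count of points in each stratum. Your moment bookkeeping (Markov applied to $k-1$ of the $k$ factors) is the same computation the paper packages as Lemma~\ref{lma:appendix}, and the balance condition you need, namely that the count of stratum-$k$ points at distance $n$ is $O(n^{k-1})$, matches the paper's Lemma~\ref{lma:partition} with $|D_q\cap H_n|\le Mn^{q-1}$.

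The one genuine gap is exactly the step you flag, but it is worth noting that your formulation of it is not the paper's and is riskier. You stratify by the number of lattice neighbours $k(z)$ of $z$ in $\Hc$ and assert both that (i) a point with $k$ neighbours admits $k$ edge-disjoint bounded-length paths to a \emph{common} interior point, and (ii) the number of distance-$n$ boundary points with exactly $k$ neighbours is $O(n^{k-1})$. Neither is automatic: having $k$ neighbours gives $k$ edge-disjoint first steps, but not obviously $k$ edge-disjoint continuations that all reach the interior within bounded length (the neighbours may themselves sit deep in the boundary shell), and (ii) is a statement about level sets of the neighbour-count function on the shell that would need its own case analysis for irrational $u$. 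The paper avoids both difficulties by stratifying instead by coordinate structure: $D_q$ consists of boundary points differing from a chosen $v_z$ in exactly $q$ coordinates, which makes the $q$ disjoint paths (axis-parallel detours through a rectangle) and the count $Mn^{q-1}$ essentially immediate, at least for $u=\ebf_1$. Also, a small internal inconsistency in your write-up: you say the counting exponent $d_k$ \emph{decreases} as $k$ increases while simultaneously calling $k=1$ the case where it is smallest; what your argument actually requires (and what is true) is that the exponent \emph{increases} with $k$, i.e.\ points with few disjoint escape routes are rare. With the stratification replaced by the paper's (or with (i) and (ii) proved), your argument closes correctly.
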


We will need another geometrical lemma. Let
$$
H_n:=\Big\{x\in\R^d:\|x\|_\infty:=\max_{i=1,2,\ldots,d}|x_i|=n\Big\}.
$$

\begin{lma}\label{lma:partition}
There is a partition $D_1,D_2,\ldots,D_{d-1}$ of the vertices in $\bVH$, such that for some $M<\infty$,
\bea
\big|D_q\cap H_n\big|\,\le\,Mn^{q-1}\quad\text{for }q=1,2,\ldots,d-1,
\eea
and if $z\in D_q\cap H_n$ for some $n$ and $q\ge1$, then there is a $v_z\in D_0\cap H_n$ such that $\|v_z-z\|\le M$, and there exist (at least) $q$ disjoint paths between $z$ and $v_z$ of length $\|v_z-z\|$. 
\end{lma}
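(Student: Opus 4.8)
The plan is to prove Lemma~\ref{lma:partition} by analyzing, for each boundary vertex $z\in\bVH$, how many of its lattice neighbours lie inside $\Hc$, and to stratify $\bVH$ accordingly. First I would define, for $z\in\bVH$, the quantity $k(z):=$ (number of neighbours of $z$ in $\Hc$) and set $D_q$ to be (roughly) those $z$ with $k(z)=q$; the crucial geometric observation to establish is that if $z$ has exactly $q$ neighbours in $\Hc$, then $z$ must lie within bounded $\ell^1$-distance of the $(d-1-q)$-codimensional ``edge'' structure of the cone's boundary. Concretely, the boundary of $\bigcup_{a\ge0}B(au,ca)$ near a point $z$ looks like (a piece of) the boundary of a Euclidean ball of radius $\asymp\|z\|$; a lattice point on or just outside such a smooth convex boundary has few lattice neighbours inside only if the inward normal is nearly axis-aligned in several coordinates, and the set of such points at scale $n$ has cardinality $O(n^{q-1})$ — this is the content of the bound $|D_q\cap H_n|\le Mn^{q-1}$.

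Next I would construct the projection $v_z$. Given $z\in D_q\cap H_n$, I want to ``push $z$ inward'' a bounded amount to land on an interior point $v_z$ in the same $\|\cdot\|_\infty$-shell $H_n$ (the same shell can be arranged, or nearly so, by choosing the pushing direction to preserve $\|\cdot\|_\infty$, or by absorbing the $O(1)$ discrepancy into $M$ and relabelling shells). The key is that the $4\sqrt d$ slack in the definition of $\Hc(u,c)$ — namely that $\Hc$ is induced by $\bigcup_{a\ge0}B(au,ca+4\sqrt d)$ while the interior $\iVH$ corresponds to $\bigcup_{a\ge0}B(au,ca)$ — guarantees that a bounded inward translate of $z$ together with a bounded tube around the connecting segment stays inside $\Hc$. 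This is exactly the kind of situation handled by Lemma~\ref{lma:connected} and Lemma~\ref{lma:2dpaths}: once we know a straight segment from $z$ to $v_z$ of length $\le M$ stays at distance $\ge\sqrt d$ (in fact $\ge 4\sqrt d$) from the relevant boundary, we get the connecting path of length $\|v_z-z\|$ for free, and moreover we can get several disjoint such paths. To get exactly $q$ disjoint paths between $z$ and $v_z$, I would route through the $q$ coordinate directions in which $z$ does have a neighbour in $\Hc$: from $z$, step into each of those $q$ neighbours, then run $q$ ``parallel'' monotone paths (à la the path construction in Proposition~\ref{prop:cone}) to $v_z$, staggering the order of coordinate steps so the $q$ paths are edge-disjoint; boundedness of $\|v_z-z\|$ keeps all of them within a bounded tube and hence inside $\Hc$.

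The main obstacle I anticipate is the purely combinatorial-geometric estimate $|D_q\cap H_n|\le Mn^{q-1}$ together with the simultaneous requirement that every $z\in D_q$ admits $q$ disjoint bounded-length paths to an interior point. These two demands pull in opposite directions: fewer neighbours (small $q$) makes the counting bound strong but makes producing even one good path delicate, since $z$ sits near a sharp feature of $\partial\Hc$; one has to verify that the inward direction can always be chosen so that a whole bounded tube clears the boundary, which requires quantitative convexity/curvature control on $\partial\big(\bigcup_{a\ge0}B(au,ca)\big)$ at scale $\|z\|$. I would handle this by working at the level of the infinite cone $\Bc(u,c)$: rescale $z$ by $1/\|z\|$ so that the boundary becomes a fixed (scale-free) cone surface, identify the stratification of that surface by the dimension of the face on which the outer normal cone has a given number of axis directions in its interior, and then translate counts of lattice points near each stratum back to the bound $O(n^{q-1})$. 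The disjoint-paths part is then a local statement near $z$, independent of $n$, and reduces to the elementary fact (already used implicitly in Lemma~\ref{lma:2dpaths} and Proposition~\ref{prop:cone}) that between two lattice points at bounded distance one can route a prescribed number of edge-disjoint monotone paths of minimal total length, all within a bounded neighbourhood of the segment joining them.

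Finally, with the lemma in hand the proof of Proposition~\ref{prop:generalboundary} follows the template of Proposition~\ref{prop:weakST} and the half-space proposition: for $z\in D_q\cap H_n$ write
$$
|T_\Hc(0,z)-\muZ(z)|\,\le\,|T_\Hc(0,v_z)-\muZ(v_z)|+T_\Hc(v_z,z)+|\muZ(v_z)-\muZ(z)|,
$$
bound the third term by $M\muZ(\ebf_1)$ via Lipschitz continuity, bound $\Pr(T_\Hc(v_z,z)>x)$ by $M\,\Pr(Y_q>x/M)$ exactly as in Proposition~\ref{prop:Ycond} (using the $q$ disjoint bounded-length paths), and bound the $v_z$-term by Theorem~\ref{thm:LDEcone} since $v_z$ is interior. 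Summing $\|z\|^{p-1}$ times these probabilities over $z\in\bVH$ and splitting according to $q$, the $D_q$-contribution is at most a constant times $\sum_n n^{(q-1)+(p-1)-?}\Pr(Y_q>\eps n/M)$ plus an interior sum controlled by Theorem~\ref{thm:HREcone}; since $\Pr(Y_q>t)=\Pr(\tau_e>t)^q$ and $\E[\tau_e^p]<\infty$, Markov's inequality gives $\Pr(Y_q>t)\le (t^{-p}\E[\tau_e^p])^{q}$, which makes every such series converge once $p\ge1$. Here one uses crucially that the exponent of $n$ coming from $|D_q\cap H_n|\le Mn^{q-1}$ is precisely what is needed to offset the gain $q$ in the power of the tail.
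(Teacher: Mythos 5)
Your plan stratifies $\bVH$ by the number $k(z)$ of lattice neighbours of $z$ inside $\Hc$, and both of the lemma's assertions are then reduced to geometric claims that you state but do not prove; unfortunately the heuristic you offer for them is not correct as it stands, and this is where the real content of the lemma lies. First, the counting bound. In the local half-space approximation (outward unit normal $\nu$ at the nearest boundary point, curvature radius $\asymp\|z\|$), for every coordinate $i$ at least one of $z\pm\ebf_i$ lies on the inner side, so every boundary point has at least $d$ neighbours in $\Hc$ no matter how the normal is oriented; in fact the typical point of $\bVH$ has all $2d$ neighbours, so ``$D_q=\{k(z)=q\}$'' is not a partition into classes $1,\dots,d-1$ without capping, and, more importantly, neighbour counts $q\le d-1$ arise only when curvature matters at lattice scale, i.e.\ when $z$ lies within distance $O(1/\|z\|)$ of the boundary of the inducing region and the relevant coordinate lines are within angle $O(1/\|z\|)$ of tangency (this is exactly the mechanism in the paper's $d=3$ example with $u=(1,1,1)/\sqrt3$, where the normal is axis-aligned and the half-space picture fails). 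The sets you must count are therefore $O(1/n)$-thin neighbourhoods of lower-dimensional strata of the cone surface, and the number of lattice points in such thin sets is an arithmetic quantity, not an area or volume; your step ``translate counts of lattice points near each stratum back to $O(n^{q-1})$'' is precisely the missing argument. Second, the paths. Paths of length $\|v_z-z\|$ are monotone, so $q$ edge-disjoint such paths must start at $q$ \emph{distinct} neighbours of $z$ that are strictly closer to $v_z$, i.e.\ in $q$ distinct coordinate directions in which $v_z-z$ is nonzero; routing ``through the $q$ existing neighbours'' fails whenever those neighbours contain an antipodal pair $z\pm\ebf_i$. What you actually need is an interior $v_z$ in the same shell $H_n$, at bounded distance, differing from $z$ in at least $q$ coordinates and such that the whole combinatorial box between $z$ and $v_z$ stays in $\Hc$ --- and for your strata, whose points sit exactly at near-tangencies of the boundary, this is the delicate case and is not established.

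For contrast, the paper avoids all of this by choosing a cruder, purely combinatorial stratification: for $u=\ebf_1$ and $c<1$ it sets $D_q=\{z\in\bVH:\ z_i\neq0\text{ for exactly $q$ indices }i\ge2\}$. The bound $|D_q\cap H_n|\le Mn^{q-1}$ is then immediate (each vanishing transverse coordinate removes one degree of freedom), and the point $v_z$ is found inside the rectangle of lattice points obtained from $z$ by increasing $z_1$ and decreasing the nonzero $z_i$ towards $0$: such moves go monotonically deeper into the cone, so the entire rectangle lies in $\Hc$, $v_z$ can be taken interior, in $H_n$, within bounded distance, and differing from $z$ in $q$ coordinates, whence the $q$ disjoint monotone box-paths exist trivially. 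Your stratification is the natural one from the standpoint of the necessary condition (it matches the lower bound through $Y_\Hc(z)$), but making it work requires genuinely harder lattice-point and tangency estimates than the lemma itself; as written, the proposal has a gap at both of the lemma's conclusions.
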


\begin{proof}
We will prove the case when $u=\ebf_1$, and leave the remaining details to the reader. There are three cases: $c>1$, $c=1$, and $c<1$. In the first case there is nothing to prove. In the second case set $D_{d-1}=\bVH$. The final case is more tricky.

Thus, assume that $c<1$. Since $H_n$ is a $(d-1)$-dimensional subset of $\R^d$, then there is $M_1<\infty$ such that
$$
\big|\bVH\cap H_n\big|\,\le\, M_1n^{d-2}.
$$
For $q=1,2,\ldots,d-1$ we define
$$
D_q:=\big\{z\in\bVH:\,z_i\neq0\text{ for $q$ indices }i\ge2\big\}.
$$
It is clear that fixation of $z_i=0$ for some $i\ge2$ in $\bVH$ reduces the degree of freedom (in the choice of $z$) by one, in the sense that going from $D_{q+1}$ to $D_q$ we loose one dimension. Hence, there is $M_2<\infty$
$$
|D_q\cap H_n|\,\le\, M_2n^{q-1},\quad\text{for each }q=1,2,\ldots,d-1.
$$

Move on to the second part of the statement. Take $z\in D_q\cap H_n$. Due to lattice symmetry, we may assume that $z_i\ge0$ for all $i=1,2,\ldots,d$. We will choose $v_z$ suitably in the rectangle
$$
\mathcal{R}_z=\big\{v\in\Vc_\Gc\cap H_n:\;z_1\le v_1\le n,\text{ and } 0\le v_i\le z_i\text{ for }i\ge2\big\}.
$$
From the definition of interior points we can find $M_3<\infty$ such that, for $v\in\mathcal{R}_z$, if $z_1+M_3\le v_1\le n$, or if $v_1=n$ and $|v-n\ebf_1|\le|z-n\ebf_1|-M_3$, then $v\in\iVH$. Choose $v_z$ accordingly. Since $v_z$ and $z$ differ in $q$ coordinates, it is easy to find $q$ disjoint paths from $v_z$ to $z$ of length $\|v_z-z\|$.
\end{proof}

Recall that $Y_q$ denotes the minimum of $q$ independent random variables distributed as $\tau_e$.

\begin{lma}\label{lma:appendix}
For every $p>0$ and integer $q\ge1$
$$
\E[\tau_e^p]<\infty\quad\Rightarrow\quad\E[Y_q^{pq}]<\infty.
$$
\end{lma}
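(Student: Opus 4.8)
The plan is to reduce the statement to the standard tail characterisation of moments: for a non-negative random variable $X$ and $s>0$, finiteness of $\E[X^s]$ is equivalent to convergence of $\int_1^\infty x^{s-1}\Pr(X>x)\,dx$, equivalently to $\sum_n n^{s-1}\Pr(X>n)<\infty$. So it suffices to show that $\E[\tau_e^p]<\infty$ implies $\int_1^\infty x^{pq-1}\Pr(Y_q>x)\,dx<\infty$.

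First I would record the key identity: since $Y_q$ is the minimum of $q$ i.i.d.\ copies of $\tau_e$, we have $\Pr(Y_q>x)=\Pr(\tau_e>x)^q$. Hence
$$
\E[Y_q^{pq}]\;=\;pq\int_0^\infty x^{pq-1}\,\Pr(Y_q>x)\,dx\;=\;pq\int_0^\infty x^{pq-1}\,\Pr(\tau_e>x)^q\,dx.
$$
Now I would bound $\Pr(\tau_e>x)$ using Markov's inequality applied to $\tau_e^p$: for $x>0$,
$$
\Pr(\tau_e>x)\;=\;\Pr(\tau_e^p>x^p)\;\le\;\frac{\E[\tau_e^p]}{x^p}.
$$
Raising this to the $q$th power (legitimate since $q\ge1$ and both sides are non-negative) gives $\Pr(\tau_e>x)^q\le \E[\tau_e^p]^q\,x^{-pq}$ for all $x>0$, but this crude bound only yields $x^{pq-1}\cdot x^{-pq}=x^{-1}$, which is not integrable at infinity. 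The fix is to waste only $q-1$ of the $q$ factors on the Markov estimate and keep one factor of $\Pr(\tau_e>x)$ intact:
$$
x^{pq-1}\,\Pr(\tau_e>x)^q\;=\;x^{pq-1}\,\Pr(\tau_e>x)^{q-1}\,\Pr(\tau_e>x)\;\le\;x^{pq-1}\cdot\frac{\E[\tau_e^p]^{\,q-1}}{x^{p(q-1)}}\cdot\Pr(\tau_e>x)\;=\;\E[\tau_e^p]^{\,q-1}\,x^{p-1}\,\Pr(\tau_e>x).
$$
Integrating over $x\in[1,\infty)$, the right-hand side is $\E[\tau_e^p]^{q-1}\int_1^\infty x^{p-1}\Pr(\tau_e>x)\,dx$, which is finite precisely because $\E[\tau_e^p]<\infty$. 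Adding the contribution of the bounded interval $[0,1]$, where the integrand is trivially integrable, gives $\E[Y_q^{pq}]<\infty$.

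There is essentially no obstacle here; the only thing to get right is the bookkeeping in the exponent—splitting $q$ as $(q-1)+1$ so that Markov is used just enough to cancel the polynomial growth $x^{pq-1}$ down to the integrable-against-tail scale $x^{p-1}$. One should also note that if $\E[\tau_e^p]=\infty$ the statement is vacuous, so the assumption $\E[\tau_e^p]<\infty$ can be taken throughout, and that the case $q=1$ is immediate since then the claim is just $\E[\tau_e^p]<\infty\Rightarrow\E[Y_1^p]=\E[\tau_e^p]<\infty$.
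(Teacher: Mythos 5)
Your proof is correct and follows essentially the same route as the paper: both use the tail-integral representation of $\E[Y_q^{pq}]$, the identity $\Pr(Y_q>x)=\Pr(\tau_e>x)^q$, and Markov's inequality applied to exactly $q-1$ of the $q$ factors so that one factor $\Pr(\tau_e>x)$ survives and the integral reduces to $\E[\tau_e^p]^{q-1}\int x^{p-1}\Pr(\tau_e>x)\,dx<\infty$. Your splitting of the integral at $x=1$ is an inessential variation; the paper simply integrates the same bound over $(0,\infty)$, which is equally valid.
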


\begin{proof}
The statement follows from a simple application of Markov's inequality: $\Pr(\tau_e>x)\le x^{-p}\,\E[\tau_e^p]$, to be used as follows.
\bea
\begin{aligned}
\E[\min\{\tau_1,\ldots,\tau_q\}^{pq}]\;&=\;pq\int_0^\infty x^{pq-1}\,\Pr(\tau_e>x)^q\,dx\\
&\le\;pq\int_0^\infty x^{pq-1}\big(x^{-p}\,\E[\tau_e^p]\big)^{q-1}\,\Pr(\tau_e>x)\,dx\\
&=\;pq\E[\tau_e^p]^{q-1}\int_0^\infty x^{p-1}\,\Pr(\tau_e>x)\,dx\;=\;q\E[\tau_e^p]^q,
\end{aligned}
\eea
as required.
\end{proof}

We may now prove the remaining statement.

\begin{proof}[{\bf Proof of Proposition~\ref{prop:generalboundary}}]
Assume that $\E[\tau_e^p]<\infty$ for some $p\ge1$. Consequently $\E[Y^{2dp}]$ is finite according to Lemma~\ref{lma:appendix}, and summability over interior points follows from Theorem~\ref{thm:HREcone}. It thus suffices to consider the contribution of boundary points.

Let $v_z$, $M<\infty$ and $D_1,D_2,\ldots,D_{d-1}$ be given as in Lemma~\ref{lma:partition}. By subadditivity we have
$$
|T_\Hc(0,z)-\muZ(z)|\,\le\,|T_\Hc(0,v_z)-\muZ(v_z)|+T_\Hc(v_z,z)+|\muZ(z)-\muZ(v_z)|.
$$
By definition $\|v_z-z\|\le M$, so $|\muZ(z)-\muZ(v_z)|\le\muZ(\ebf_1)M$ due to Lipschitz continuity of $\muZ$. Hence, for every $\eps>0$ we have for all $z\in\bVH$ sufficiently far (depending on $\eps$) from the origin that
$$
\Pr\big(|T_\Hc(0,z)-\muZ(z)|>3\eps\|z\|\big)\,\le\,\Pr\big(|T_\Hc(0,v_z)-\muZ(v_z)|>\eps\|z\|\big)+\Pr\big(T_\Hc(v_z,z)>\eps\|z\|\big).
$$
The latter term in the right-hand side is for $z\in D_q$ at most $\|v_z-z\|^q\,\Pr(Y_q>\eps\|z\|/M)$. Since the number of points in $D_q$ at $\ell^1$-distance $n$ from the origin is (at most) of order $n^{q-1}$, we arrive at the following upper bound
\bea
\begin{aligned}
\sum_{z\in\bVH}&\|z\|^{p-1}\,\Pr\big(|T_\Hc(0,z)-\muZ(z)|>3\eps\|z\|\big)\\
&\le\;\sum_{z\in\bVH}\|z\|^{p-1}\,\Pr\big(|T_\Hc(0,v_z)-\muZ(v_z)|>\eps\|z\|\big)+\sum_{q=1}^{d-1}\sum_{z\in D_q}\|z\|^{p-1}\,\Pr\big(T_\Hc(v_z,z)>\eps\|z\|\big)\\
&\le\;\sum_{z\in\bVH}\|z\|^{p-1}\,\Pr\big(|T_\Hc(0,v_z)-\muZ(v_z)|>\eps\|z\|\big)+\sum_{q=1}^{d-1}M^q\sum_{n=1}^\infty n^{p+q-2}\,\Pr(Y_q>\eps n/M).
\end{aligned}
\eea
The former sum on the lower line above is via Theorem~\ref{thm:HREcone} finite if $\E[Y^{p+d-1}]$ is finite. The latter (double-) sum is finite if $\E[Y_q^{p+q-1}]$ is finite for all $q=1,2,\ldots,d$. Since $\E[\tau_e^p]$ was assumed finite, also the remaining conditions are satisfied as a consequence of Lemma~\ref{lma:appendix}.
\end{proof}

\bibliographystyle{alpha}
\bibliography{bibpercolation}

\newcommand{\noopsort}[1]{}
\begin{thebibliography}{BHPS03}

\bibitem[ADH13]{aufdamhan13}
A.~Auffinger, M.~Damron, and J.~Hanson.
\newblock Limiting geodesics for first-passage percolation on subsets of
  $\mathbb{Z}^2$.
\newblock Available as \emph{arXiv: 1302.5413}, 2013.

\bibitem[ADS13]{Adamsid13}
D.~Ahlberg, M.~Damron, and V.~Sidoravicius.
\newblock Inhomogeneous first-passage percolation.
\newblock In preparation, 2013.

\bibitem[Ahl11]{A11-1}
D.~Ahlberg.
\newblock Asymptotics of first-passage percolation on 1-dimensional graphs.
\newblock Available as \emph{arXiv: 1107.2276}, 2011.

\bibitem[Ahl13]{A13}
D.~Ahlberg.
\newblock A {H}su-{R}obbins-{E}rd{\H o}s strong law in first-passage
  percolation.
\newblock Available as \emph{arXiv: 1305.6260}, 2013.

\bibitem[BHPS03]{benhagperste03}
I.~Benjamini, O.~H{\"a}ggstr{\"o}m, Y.~Peres, and J.~E. Steif.
\newblock Which properties of a random sequence are dynamically sensitive?
\newblock {\em Ann. Probab.}, 31:1--34, 2003.

\bibitem[BKS99]{BKS99}
I.~Benjamini, G.~Kalai, and O.~Schramm.
\newblock Noise sensitivity of {B}oolean functions and applications to
  percolation.
\newblock {\em Inst. Hautes {\'E}tudes Sci. Publ. Math.}, 90:5--43, 1999.

\bibitem[CD81]{coxdur81}
J.~T. Cox and R.~Durrett.
\newblock Some limit theorems for percolation processes with necessary and
  sufficient conditions.
\newblock {\em Ann. Probab.}, 9:583--603, 1981.

\bibitem[CK81]{coxkes81}
J.~T. Cox and H.~Kesten.
\newblock On the continuity of the time constant of first-passage percolation.
\newblock {\em J. Appl. Probab.}, 18:809--819, 1981.

\bibitem[Cox80]{cox80}
J.~T. Cox.
\newblock The time constant of first-passage percolation on the square lattice.
\newblock {\em Adv. in Appl. Probab.}, 12:864--879, 1980.

\bibitem[GPS10]{garpetsch10}
C.~Garban, G.~Pete, and O.~Schramm.
\newblock The {F}ourier spectrum of critical percolation.
\newblock {\em Acta Math.}, 205:19--104, 2010.

\bibitem[Gri83]{grimmett83}
G.~Grimmett.
\newblock Bond percolation on subsets of the square lattice, the transition
  between one-dimensional and two-dimensional behaviour.
\newblock {\em J. Phys. A}, 16:599--604, 1983.

\bibitem[HPS97]{hagperste97}
O.~H{\"a}ggstr{\"o}m, Y.~Peres, and J.~E. Steif.
\newblock Dynamical percolation.
\newblock {\em Ann. Inst. Henri Poincar{\'e} Probab. Stat.}, 33:497--528, 1997.

\bibitem[HR47]{hsurob47}
P.~L. Hsu and H.~Robbins.
\newblock Complete convergence and the law of large numbers.
\newblock {\em Proc. Nat. Acad. Sci. U. S. A.}, 33:25--31, 1947.

\bibitem[HW65]{hamwel65}
J.~M. Hammersley and D.~J.~A. Welsh.
\newblock First-passage percolation, subadditive processes, stochastic networks
  and generalized renewal theory.
\newblock In J.~Neyman and L.~Le~Cam, editors, {\em Bernoulli, Bayes, Laplace
  Anniversary Volume}, Proc. Internat. Res. Semin., Statist. Lab., Univ.
  California, Berkeley, Calif., pages 61--110. Springer-Verlag, New York, 1965.

\bibitem[Kes86]{kesten86}
H.~Kesten.
\newblock Aspects of first-passage percolation.
\newblock In {\em {\'E}cole d'{\'E}t{\'e} de Probabilit{\'e}s de Saint Flour
  XIV - 1984}, volume 1180 of {\em Lecture Notes in Math.}, pages 125--264.
  Springer, Berlin, 1986.

\bibitem[Kin68]{kingman68}
J.~F.~C. Kingman.
\newblock The ergodic theory of subadditive stochastic processes.
\newblock {\em J. Roy. Statist. Soc. Ser. B}, 30:499--510, 1968.

\bibitem[Ric73]{richardson73}
D.~Richardson.
\newblock Random growth in a tesselation.
\newblock {\em Proc. Cambridge Philos. Soc.}, 74:515--528, 1973.

\bibitem[SS10]{schste10}
O.~Schramm and J.~E. Steif.
\newblock Quantitative noise sensitivity and exceptional times for percolation.
\newblock {\em Ann. of Math. (2)}, 171:619--672, 2010.

\end{thebibliography}

\end{document}